\newcommand{\Mod}[1]{~(\mathrm{mod}~#1)}
\newcommand{\diag}{\mathrm{diag}}
\newcommand{\disc}{\mathrm{disc}}
\newcommand{\SNF}{\mathrm{SNF}}
\newcommand{\ZZ}{\mathbb{Z}}
\newcommand{\QQ}{\mathbb{Q}}
\newcommand{\RR}{\mathbb{R}}
\newcommand{\GL}{\mathrm{GL}}
\newtheorem{theorem}{Theorem}
\newtheorem{proposition}{Proposition}[section]
\newtheorem{lemma}[proposition]{Lemma}
\newtheorem{corollary}[proposition]{Corollary}
\theoremstyle{definition}
\newtheorem*{definition}{Definition}
\theoremstyle{remark}
\newtheorem{remark}[proposition]{Remark}
\newtheorem{example}[proposition]{Example}
\newtheorem{conjecture}[proposition]{Conjecture}
\title{Extensions of integral orthoregular sets and icubes}
\author{Márton Erdélyi}
\author{Péter Maga}
\author{Gergely Zábrádi}
\date{}
\address{HUN-REN Alfréd Rényi Institute of Mathematics, POB 127, Budapest H-1364, Hungary}
\email{merdelyi@math.bme.hu, magapeter@gmail.com, gergely.zabradi@ttk.elte.hu}
\address{Budapest University of Technology and Economics, Institute of Mathematics, Department of Algebra and Geometry, Egry József u. 1, Budapest H-1111, Hungary}
\email{merdelyi@math.bme.hu}
\address{Eötvös Loránd University, Institute of Mathematics, Pázmány Péter sétány 1/C, Budapest H-1117, Hungary}
\email{gergely.zabradi@ttk.elte.hu}
\begin{document}

\keywords{integral vectors, orthogonal bases, icubes, sup-norm problem of automorphic forms, factorization in quaternion algebras}

\subjclass[2020]{11E25, 11R52, 52C07}

\maketitle

\begin{abstract} In this paper, we study the question when a (rational or Gaussian) integral vector can be extended to an integral orthogonal basis consisting of vectors of equal length. We also study when a set of integral vectors has such an extension. Some necessary conditions are given which are proven to be sufficient in dimensions $3$ and $4$.
\end{abstract}

\section{Introduction}

Given an integral vector $\left(\begin{smallmatrix}x \\ y \end{smallmatrix}\right)\in\ZZ^2$, $\left(\begin{smallmatrix}-y \\ x \end{smallmatrix}\right)\in\ZZ^2$ is also integral, has the same length as the original one, and they are orthogonal to each other. This motivates the question in higher dimension: given an integral vector, can we find orthogonal integral vectors of the same length, or, most optimistically, can we complete our first vector to an orthogonal basis consisting of integral vectors of equal length?
The answer in this generality is negative, as one can readily check that if $n$ is odd and all the coordinates of $v\in\ZZ^n$ are odd, then for any $w\in\ZZ^n$ we have $|w|^2=w^Tw\equiv v^Tw \Mod{2}$ showing that if $w$ is orthogonal to $v$ of then $|w|^2$ is even, thus there is no integral vector which is orthogonal to $v$ of the same length.

The more refined question is then of course \emph{when} a given vector $v\in\ZZ^n$ can be continued to an orthogonal system consisting of vectors of the same length. This turns out to be surprisingly challenging even in low dimension. Before describing the history of the problem and our new results, we introduce the main notions following
\cite{GoswickKissMoussongSimanyi}. (All along the paper, vectors are thought of as column vectors.)
\begin{definition}[$k$-icube in $\ZZ^n$] For $1\leq k\leq n$, a matrix $(v_1|\dotsc|v_k)\in \ZZ^{n\times k}$ is called a $k$-icube (in $\ZZ$, of norm $\lambda > 0$),
if
\[
v_i^T v_j = \begin{cases} \lambda, \qquad & \text{if $i=j$}, \\ 0, & \text{if $i\neq j$.} \end{cases}
\]
In particular, a nonzero integral vector $v$ alone is a $1$-icube of norm $|v|^2$. For $1\leq\ell < k$, we say that an $\ell$-icube $A$ can be extended to a $k$-icube, if some $\ell$ columns of some $k$-icube form $A$. An icube is a $k$-icube for some $1\leq k\leq n$.
\end{definition}

In this paper, we present our new findings in this area, which we group into four topics. The first one collects some observations about the extendability of vectors ($1$-cubes) to $n$-cubes both over $\ZZ$ and $\ZZ[i]$.  The second one solves completely the question of extendability of icubes in dimensions $3$ and $4$ over the Gaussian integers (whose counterpart over the rational integers had previously been solved by others). The third one is the $2$-dimensional setup (both over the rationals and quadratic number fields) for arbitrary hermitian forms in place of the standard inner product of the $n$-space, revealing a delicate connection to divisibility relations in orders of certain quaternion algebras. Lastly, we provide an application in the sup-norm problem of automorphic forms -- which in fact had been our main motivation to investigate the extendability of Gaussian integral icubes.

\subsection{Extension of vectors to $n$-icubes}~

The analysis of rational integral $3$-icubes has been started by Sárközy \cite{Sarkozy} using Euler matrices (see also the work of Horváth \cite{Horvath} for more elementary methods) who determined the possible norms of $3$-icubes. Later on, Goswick, Kiss, Moussong and Simányi \cite{GoswickKissMoussongSimanyi}, and Kiss and Kutas \cite{KissKutas} investigated the extendability of icubes in dimensions $3$ and $4$. These questions were also studied by Lacalle and Gatti \cite{LacalleGatti}, and Chamizo and Jiménez-Urroz \cite{ChamizoJimenezUrroz} independently, motivated by a model for discrete quantum computation \cite{GattiLacalle}. We list some of their results.

For $n=2,4,8$ every integral vector can be extended to an  $n$-icube and the construction uses the algebraic properties of complex numbers, quaternions and octonions respectively. This is based on the existence of Hurwitz algebras, so we have no such constructions for other values of $n$ (see for example the first page of \cite{KissKutas}).

\begin{proposition}[{\cite[Proposition~1.3]{GoswickKissMoussongSimanyi}}, already proven in {\cite[p.233]{Sarkozy}}, attributed to Kárteszi]\label{2kp1}
If $n$ is odd, then the norm of any $n$-icube in $\ZZ^n$ is a square.
\end{proposition}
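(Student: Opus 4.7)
The plan is to exploit the fact that when $k=n$, the matrix $M=(v_1|\dotsc|v_n)$ assembled from the columns of the $n$-icube is square, so the defining relations $v_i^Tv_j=\lambda\delta_{ij}$ can be compactly rewritten as $M^TM=\lambda I_n$. This turns the problem into a determinantal identity.

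First I would take determinants of both sides of $M^TM=\lambda I_n$, which yields $\det(M)^2=\lambda^n$. Since the columns $v_i$ lie in $\ZZ^n$, the matrix $M$ has integer entries and therefore $\det(M)\in\ZZ$. Consequently $\lambda^n$ is a perfect square in $\ZZ$.

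Finally I would invoke unique factorization in $\ZZ$ to conclude. Write $\lambda=\prod_p p^{a_p}$ as a product of prime powers (note $\lambda>0$ since $\lambda=|v_1|^2$). Then $\lambda^n=\prod_p p^{na_p}$ is a square precisely when $na_p$ is even for every prime $p$. With $n$ odd this forces each $a_p$ to be even, so $\lambda$ itself is a perfect square, as claimed.

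There is essentially no obstacle here; the only subtle point is recognising that the $n\times n$ orthogonality relations assemble into $M^TM=\lambda I_n$, after which the argument is a one-line determinant computation followed by an elementary parity statement on prime exponents. The hypothesis that $n$ is odd is used in exactly this last step, and the argument clearly fails for even $n$, which is consistent with the existence of non-square norms in dimensions $2,4,8$ mentioned above.
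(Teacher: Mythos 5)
Your proof is correct. The key step is the same as in the paper: assemble the orthogonality relations into $M^TM=\lambda I_n$ and take determinants to get $\det(M)^2=\lambda^n$ with $\det(M)\in\ZZ$. Where you diverge is in the finish. You conclude directly by unique factorization: $\lambda^n$ a square and $n$ odd forces every prime exponent of $\lambda$ to be even. The paper instead observes that both $\lambda^n=\det(A)\overline{\det(A)}$ and $\lambda^{n-1}=\lambda^{\frac{n-1}{2}}\cdot\overline{\lambda^{\frac{n-1}{2}}}$ are of the form $y\overline y$, and then invokes a general lemma (Lemma~\ref{quotientisnorm}) asserting that in a class-number-one field the quotient of two such ``absolute squares'' is again one, so $\lambda=\lambda^n/\lambda^{n-1}$ is a square. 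The reason for this more elaborate formulation is that the paper proves the statement simultaneously over $\QQ$ and over $\QQ(i)$ (and any conjugation-stable number field of class number one), where ``square'' becomes ``norm $r\overline r$'' and your direct parity argument would need to be adapted (e.g.\ over $\ZZ[i]$ one would argue about parity of exponents of primes $\equiv 3\pmod 4$). For the rational statement at hand, your version is perfectly adequate and arguably more elementary and self-contained; the paper's version buys uniformity across the Gaussian analogue, Proposition~\ref{2kp1i}.
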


Our first result is
\begin{theorem}\label{4kp2}
If $n=4k+2$ and $v\in\mathbb Z^n$ is contained in an $n$-icube, then $|v|^2\in\mathbb Z$ is the sum of two squares.
\end{theorem}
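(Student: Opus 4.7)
The plan is to interpret the existence of an $n$-icube containing $v$ as a rational isometry between two diagonal quadratic forms, and then to extract a necessary condition on $\lambda=|v|^2$ by comparing their local invariants. If $M=(v\,|\,v_2|\dotsc|v_n)\in\ZZ^{n\times n}$ is such an icube, then $M^TM=\lambda I_n$ exhibits, over $\QQ$, an equivalence between $\langle 1,\dotsc,1\rangle$ and $\langle\lambda,\dotsc,\lambda\rangle$ in $n$ variables, and the goal is to see what this forces on $\lambda$.

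Two equivalent quadratic forms over $\QQ$ have the same discriminant in $\QQ^*/(\QQ^*)^2$ and the same Hasse invariant at each prime. Here the discriminants $1$ and $\lambda^n$ coincide since $n$ is even, so nothing comes from them. For the Hasse invariants, using the standard identity $(a,a)_p=(a,-1)_p$ (a consequence of $(a,-a)_p=1$), one computes
\[
\epsilon_p(\langle\lambda,\dotsc,\lambda\rangle)=\prod_{i<j}(\lambda,\lambda)_p=(\lambda,-1)_p^{\binom{n}{2}},
\]
whereas $\epsilon_p(\langle 1,\dotsc,1\rangle)=1$; equivalence therefore forces $(\lambda,-1)_p^{\binom{n}{2}}=1$ at every finite prime $p$.

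The finishing touch is a parity observation specific to $n=4k+2$: since $\binom{n}{2}=(2k+1)(4k+1)$ is a product of two odd integers, it is itself odd, so the constraint reduces to $(\lambda,-1)_p=1$ at every finite $p$. As $\lambda>0$ the archimedean symbol is trivial as well, which (by Hilbert reciprocity) identifies $\lambda$ as a norm from $\QQ(i)$, i.e., a sum of two rational squares. By Fermat's classical theorem, a positive integer which is a sum of two rational squares is in fact a sum of two integer squares.

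The only step that is not routine bookkeeping is isolating the correct parity: $\binom{n}{2}$ is odd exactly for $n\equiv 2,3\pmod 4$, and intersecting with $n$ even (needed to make the discriminant obstruction vanish) singles out exactly $n=4k+2$. I do not anticipate a substantive obstacle in the argument; a more elementary route, via reducing $M$ mod $p$ for $p\equiv 3\pmod 4$ dividing $\lambda$ and using that the standard form on $\mathbb{F}_p^n$ has Witt index $2k$, yields only the weaker bound $v_p(\lambda)\ge 2$, and upgrading to the full evenness of $v_p(\lambda)$ appears to require essentially the same Hilbert-symbol content $p$-adically.
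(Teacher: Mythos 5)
Your argument is correct, but it takes a genuinely different route from the paper. You read $M^TM=\lambda I$ as a $\QQ$-equivalence between $\langle 1,\dotsc,1\rangle$ and $\langle \lambda,\dotsc,\lambda\rangle$ and compare discriminants and Hasse invariants: the computation $\epsilon_p(\langle\lambda,\dotsc,\lambda\rangle)=(\lambda,-1)_p^{\binom n2}$ is right, $\binom{4k+2}{2}$ is indeed odd, and $(\lambda,-1)_p=1$ at every finite $p$ (plus $\lambda>0$) does force $\lambda$ to be a sum of two squares — most directly because for $p\equiv 3\pmod 4$ one has $(\lambda,-1)_p=(-1)^{\nu_p(\lambda)}$, so all such exponents are even; you do not really need the global step at all, and in any case the name for that step would be Hasse–Minkowski (or the Hasse norm theorem for $\QQ(i)/\QQ$), not Hilbert reciprocity, which only gives the product formula. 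The paper instead argues elementarily: it takes the Smith normal form $A=S\,\diag(\alpha_1,\dotsc,\alpha_n)\,T$, uses its Lemma on icube SNFs ($\alpha_j\alpha_{n+1-j}=\lambda$) to bound $\nu_p(\alpha_j)\le\ell$ for $j\le 2k+1$ when $\nu_p(\lambda)=2\ell+1$ with $p\equiv 3\pmod 4$, deduces that the top-left $(2k+1)\times(2k+1)$ block of $S^TS$ vanishes mod $p$, and gets the contradiction $(\det S_{11})^2+(\det S_{21})^2\equiv 0\pmod p$ with $p\nmid\det S_{11}$. Your approach buys brevity (granting standard quadratic form theory) and transparency about why the residue of $n$ mod $4$ is the decisive datum — in fact the same invariant comparison recovers Proposition \ref{2kp1} (odd $n$ forces $\lambda$ square via the discriminant) and shows there is no such obstruction for $4\mid n$ — while the paper's proof is self-contained, stays over $\ZZ$, and reuses machinery (the SNF lemma) needed elsewhere in the paper. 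Your closing remark that an elementary mod-$p$ reduction can only give $\nu_p(\lambda)\ge 2$ undersells what is possible: the paper's SNF argument is exactly such an elementary route and does obtain the full statement.
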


Note that this is a generalization of \cite[Proposition~18]{LacalleGatti}.

One can synthesize Proposition \ref{2kp1} and Theorem \ref{4kp2} as follows:
\begin{corollary}
If $v\in\mathbb Z^n$ is contained in an $n$-icube, then $|v|^2\in\mathbb Z$ is the sum of $(4,n)$ squares.
\end{corollary}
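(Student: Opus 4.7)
The plan is to split according to the three possible values of $d := (4,n) = \gcd(4,n)$, namely $d \in \{1,2,4\}$, and observe that the corresponding statement is already available in each case. Since $|v|^2$ is a non-negative integer, the problem reduces to showing it is representable as a sum of $d$ squares.

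If $n$ is odd, then $d=1$, and the claim is that $|v|^2$ is itself a square; this is precisely Proposition~\ref{2kp1}. If $n \equiv 2 \pmod 4$, then $d=2$, and the claim is that $|v|^2$ is a sum of two squares, which is precisely Theorem~\ref{4kp2} (writing $n = 4k+2$). Finally, if $n \equiv 0 \pmod 4$, then $d=4$, and the claim is that $|v|^2$ is a sum of four squares. But this holds for \emph{every} non-negative integer by Lagrange's four-square theorem, so no information about $v$ or the icube is needed in this last case.

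There is essentially no genuine obstacle here: the corollary is a packaging statement that bundles Proposition~\ref{2kp1}, Theorem~\ref{4kp2}, and Lagrange's theorem into the single uniform formula involving $(4,n)$. The only point worth being careful about is that when $n \equiv 0 \pmod 4$ the content of the statement is vacuous in the sense that it does not exploit the icube hypothesis at all; one could note this as a remark, pointing out that in the $4 \mid n$ case the extendability assumption presumably forces much stronger Diophantine conditions on $|v|^2$ than just being a sum of four squares, but proving any such strengthening is outside the scope of the corollary.

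The write-up can therefore be a short case distinction of three lines invoking the three cited results, with no further calculation required.
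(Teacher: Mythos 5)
Your case distinction is exactly the intended argument: the paper offers no separate proof, merely stating that the corollary synthesizes Proposition~\ref{2kp1} ($n$ odd), Theorem~\ref{4kp2} ($n\equiv 2\pmod 4$), and, implicitly, Lagrange's four-square theorem when $4\mid n$. Your write-up, including the remark that the $4\mid n$ case uses no information about the icube, is correct and matches the paper's route.
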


The converse is not true in general, for example the vector $(1,1, \dots, 1)^T\in\ZZ^9$ has square norm but the argument of the first paragraph shows that it cannot be extended to a $9$-icube. On the other hand $(3,0,0,\dots,0)^T\in\mathbb Z^9$ is also of norm 9, and it has an obvious extension to a $9$-icube, so one cannot except a characterization of vectors which can be extended to an icube of full rank based solely on the norm.

However for $n=3$ we have

\begin{theorem}[{\cite[Theorem~1.4]{GoswickKissMoussongSimanyi}}, already proven in \cite{Sarkozy} ]\label{3Zv}
Any vector in $\ZZ^3$ of square norm can be extended to a $3$-icube.
\end{theorem}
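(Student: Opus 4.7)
The plan is to use the classical double cover $\mathbb H^\times \to \operatorname{SO}(3,\RR)$ by quaternion conjugation to build the two missing orthogonal vectors. Identify $\RR^3$ with the space of pure quaternions $\operatorname{Im}(\mathbb H) = \RR i + \RR j + \RR k$ via $(a,b,c)^T \leftrightarrow a i + b j + c k$. For any nonzero $q = w + x i + y j + z k$, the rotation $R_q(\vec u) := q \vec u \bar q / |q|^2$ has the Euler--Rodrigues matrix, whose entries are quadratic polynomials in $w,x,y,z$ divided by $|q|^2$. A short calculation shows that if $q$ lies in the Hurwitz order $\mathcal O \subset \mathbb H$ and $|q|^2 = N$, then $N R_q \in \ZZ^{3 \times 3}$, and its three columns therefore form a $3$-icube in $\ZZ^3$ of norm $N^2$.

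This reduces the theorem to the following arithmetic existence claim: for every $(a,b,c) \in \ZZ^3$ with $a^2+b^2+c^2=N^2$ there exists $q \in \mathcal O$ with $|q|^2 = N$ and $q i \bar q = a i + b j + c k$, so that $v$ is the first column of $N R_q$. Unpacked via Euler--Rodrigues, this is the classical ``inverse four-square'' problem of exhibiting $(w,x,y,z) \in \ZZ^4$ (or in $\tfrac12 + \ZZ$) with
\[
w^2+x^2+y^2+z^2 = N,\quad a = w^2+x^2-y^2-z^2,\quad b = 2(xy+wz),\quad c = 2(xz-wy).
\]

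I would establish the existence of $q$ by induction on the number of rational prime divisors of $N$ (counted with multiplicity), leveraging that $\mathcal O$ is a left-and-right noncommutative Euclidean domain. For each $p \mid N$ one picks a Hurwitz prime $\pi$ of norm $p$ that right-divides $\vec v := a i + b j + c k$, writes $\vec v = \vec v' \pi$ with $|\vec v'|^2 = N^2/p$, conjugates $\vec v'$ by a suitable unit to bring it back to a pure integer quaternion of norm $(N/p)^2$, applies the inductive hypothesis to produce $q' \in \mathcal O$ of norm $N/p$, and reassembles $q = \pi' q'$ with a suitable partner prime $\pi'$ of norm $p$. The main technical obstacle will be the prime $p = 2$: up to units there is a unique Hurwitz prime of norm $2$, namely $1+i$, and whether it right-divides $\vec v$ depends on the parities of $a, b, c$. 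A careful parity analysis, using the symmetries of the problem (sign changes and permutations of coordinates, and the Hurwitz unit group action on $\vec v$), will be needed to always reduce to a case where the factorization at $2$ is possible, and to ensure that the final $q$ lies in $\mathcal O$ itself rather than in $\tfrac12 \mathcal O$.
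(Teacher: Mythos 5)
Your central arithmetic claim is false as stated, and this is a genuine gap rather than a technicality. You require $q\in\mathcal O$ with $|q|^2=N$ \emph{exactly} and $q\,i\,\bar q=ai+bj+ck$. Take $v=(3,0,0)^T$, $N=3$: the equation $q\,i\,\bar q=3i$ with $|q|^2=3$ forces $q\,i\,q^{-1}=i$, so $q=w+xi$ with $w^2+x^2=3$ (half-integer coordinates are excluded because the $j,k$-coordinates would be $0\in\ZZ$), which has no solution; concretely, for every Hurwitz $q$ of norm $3$ the first entry of the Euler--Rodrigues column $q\,i\,\bar q$ lies in $\{\pm1,\pm2\}$, never $3$. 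Yet $(3,0,0)^T$ extends to the $3$-icube $3I$, so the family $N R_q$ with $|q|^2=N$ simply does not reach all vectors of square norm; the vectors it misses are exactly the imprimitive ones (another example is $(3,6,6)^T$ with $N=9$). The missing ingredient is the reduction to primitive $v$: divide by the gcd of the entries, extend, and multiply the two new columns back by the gcd. That reduction also forces $N$ to be odd (if $4\mid a^2+b^2+c^2$ then $a,b,c$ are all even), which dissolves the $p=2$ obstacle you spend your last paragraph worrying about. Even for primitive $v$, your inductive step does not work as written: conjugation by a unit preserves norms, so it cannot turn $\vec v'$ with $|\vec v'|^2=N^2/p$ (not even a perfect square) into a pure quaternion of norm $(N/p)^2$; the correct descent must strip a prime of norm $p$ from \emph{both} sides, writing $v=\bar\pi v'\pi$ with $v'$ pure of norm $(N/p)^2$ and then taking $q=\bar\pi q'$, and the existence of such a two-sided factorization with $v'$ pure and integral is precisely the point that needs proof (it is, in substance, the classical Lebesgue/S\'ark\"ozy parametrization of primitive Pythagorean quadruples).

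For comparison, the paper does not parametrize the whole matrix by quaternion rotations. It passes to the rank-$2$ lattice $\Lambda=\{w\in\ZZ^3\mid a_1^*w=0\}$, shows via Proposition \ref{discQlemma} that the restricted form $Q$ has discriminant $\lambda$, chooses a basis vector of $\Lambda$ with a zero coordinate so that one diagonal entry of the Gram matrix is a sum of two squares, and then produces the orthoregular pair $(a_2,a_3)$ from a factorization in the Gaussian integers via Proposition \ref{Orthoregbasisthm}; the same argument gives the stronger Theorem \ref{3Z} (extension of arbitrary icubes, not just vectors). Your quaternionic route is the historically original one and can be completed, but only after the primitivity reduction and with a correctly formulated two-sided descent; as submitted, both the reduction statement and the induction are incorrect.
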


Now we turn to the Gaussian counterpart of the area. Given any complex matrix $M$ (including complex column vectors), $M^*:=\overline{M}^T$ stands for its conjugate transpose.

\begin{definition}[$k$-icube in $\ZZ[{i]}^n$] For $1\leq k\leq n$, a matrix $(v_1|\dotsc|v_k)\in \ZZ[i]^{n\times k}$ is called a $k$-icube (in $\ZZ[i]$, of norm $\lambda > 0$),
if
\[
v_i^* v_j = \begin{cases} \lambda, \qquad & \text{if $i=j$}, \\ 0, & \text{if $i\neq j$.} \end{cases}
\]
In particular, a nonzero Gaussian integral vector alone is a $1$-icube. For $1\leq\ell < k$, we say that an $\ell$-icube $A$ can be extended to a $k$-icube, if some $\ell$ columns of some $k$-icube form $A$. An icube is a $k$-icube for some $1\leq k\leq n$.
\end{definition}

We prove the $\mathbb Z[i]$-analogues of the above statements.

\begin{proposition}\label{2kp1i}
If $n$ is odd and $v\in\mathbb Z[i]^n$ is contained in an $n$-icube of norm $\lambda$, then $\lambda\in\mathbb Z$ is the sum of two rational squares.
\end{proposition}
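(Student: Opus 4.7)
The strategy mirrors the determinantal argument behind Proposition~\ref{2kp1}, adapted to the Gaussian setting.

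First, let $M=(v_1|\dotsc|v_n)\in\ZZ[i]^{n\times n}$ be an $n$-icube of norm $\lambda$ containing $v$. By definition $M^*M=\lambda I_n$, hence taking determinants gives $|\det M|^2=\lambda^n$. Writing $\det M=a+bi$ with $a,b\in\ZZ$, this reads
\[
\lambda^n = a^2+b^2.
\]
So I have reduced the problem to showing that if a positive integer $\lambda$ has an odd power equal to a sum of two integer squares, then $\lambda$ itself is a sum of two squares (rational or, equivalently, integer).

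Next, I would invoke the classical characterization of sums of two squares by their prime factorization: a positive integer $N$ is a sum of two integer squares iff $v_p(N)$ is even for every rational prime $p\equiv 3\pmod{4}$. Fix such a $p$. Since $p\equiv 3\pmod 4$, the relation $p\mid a^2+b^2$ forces $p\mid a$ and $p\mid b$; therefore $v_p(a^2+b^2)=2\min(v_p(a),v_p(b))$ is even. Thus $n\cdot v_p(\lambda)=v_p(\lambda^n)$ is even, and since $n$ is odd, $v_p(\lambda)$ itself is even. Applying the two-squares theorem to $\lambda$ concludes the proof, and by Davenport--Cassels (or a direct clearing-of-denominators argument) representability by two rational squares coincides with representability by two integer squares for integers, matching the statement.

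I do not foresee a genuine obstacle: the $\ZZ[i]$-nature of $M$ is used only to identify $|\det M|^2$ with an integer sum of two squares, after which the argument is pure elementary number theory. The one step worth double-checking is that $\lambda$ is indeed a positive integer in the Gaussian setting (it equals $v_i^*v_i=\sum_j|v_{ij}|^2\in\ZZ_{\geq 0}$, nonzero since $v_i\neq 0$), so the factorization-based criterion applies without modification.
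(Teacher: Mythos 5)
Your proof is correct, and its starting point is exactly the paper's: take the icube $M$ containing $v$, use $M^*M=\lambda I$ to get $|\det M|^2=\lambda^n$, and then exploit that $n$ is odd. Where you diverge is the final descent from ``$\lambda^n$ is a sum of two squares'' to ``$\lambda$ is a sum of two squares''. You do this via the classical two-squares criterion: for a prime $p\equiv 3\pmod 4$, $v_p(a^2+b^2)$ is always even (your formula $v_p(a^2+b^2)=2\min(v_p(a),v_p(b))$ is right, though strictly it needs the one-line iterate of the observation that $p\mid a'^2+b'^2$ with $p\nmid a'$ is impossible), so $n\,v_p(\lambda)$ even and $n$ odd force $v_p(\lambda)$ even. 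The paper instead proves a more general statement (Proposition~\ref{2kp1general}, for any conjugation-stable number field $K$ of class number one): it notes that both $\lambda^n=\det(A)\overline{\det(A)}$ and $\lambda^{n-1}=\lambda^{\frac{n-1}{2}}\overline{\lambda^{\frac{n-1}{2}}}$ are norms from $\mathcal{O}_K$ and then invokes Lemma~\ref{quotientisnorm} (a unique-factorization argument showing a quotient of norms is a norm) to conclude $\lambda$ is a norm; specializing $K=\QQ(i)$ gives the statement. Your route is more elementary and self-contained but tied to $\QQ(i)$, since it rests on the explicit prime characterization of sums of two squares; the paper's route buys uniformity (Propositions~\ref{2kp1} and~\ref{2kp1i} fall out of one argument) and reuses Lemma~\ref{quotientisnorm} elsewhere. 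Your closing remarks that $\lambda\in\ZZ_{>0}$ and that integral representability by two rational squares matches integral representability are fine and dispose of the only edge cases.
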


The converse of Proposition \ref{2kp1i} is not true in general: assume that $n$ is odd, and that none of the coordinates of the vector $v\in\ZZ[i]^n$ is divisible by $1+i$. Then for any $w\in\ZZ[i]^n$ of the same length, we have
\[1\equiv n\cdot1\equiv |w|^2=w^*w\equiv v^*w \Mod{1+i}.\]
This shows that there is no integral vector orthogonal to $v$ of the same length.

Thus $(1,1,1,1,1)^T\in\mathbb Z[i]^5$ of norm $5=2^2+1^2$ cannot be extended to an icube of full rank. 

\begin{theorem}\label{3Ziv}
Assume $\lambda$ is the sum of two rational squares. Then any vector in $\ZZ[i]^3$ of norm $\lambda$ can be extended to a $3$-icube.
\end{theorem}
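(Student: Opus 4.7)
Set $\pi = a+bi$ so that $\pi\overline\pi = \lambda$. The plan is to find an orthogonal partner $w \in \ZZ[i]^3$ of the same norm as $v$, and then extract the third column via a cross product divided by $\pi$. A first reduction brings $v$ to the primitive case: if $d = \gcd(v_1, v_2, v_3) \in \ZZ[i]$, then $|d|^2$ divides $\lambda$, and the quotient $\lambda/|d|^2$ is still a sum of two squares (every rational prime $p \equiv 3 \pmod 4$ occurs to even power both in $\lambda$ and in $|d|^2$, since $|d|^2$ is itself a norm from $\ZZ[i]$). Extending $v/d$ to an icube of norm $\lambda/|d|^2$ and rescaling by $d$ then extends $v$.

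\emph{Existence of $w$.} The delicate step is producing $w \in \ZZ[i]^3$ with $v^*w = 0$ and $w^*w = \lambda$. The lattice $v^\perp \cap \ZZ[i]^3$ is a rank-$2$ $\ZZ[i]$-module whose restricted hermitian form has discriminant $\lambda$ (a standard orthogonal-complement computation in the unimodular hermitian lattice $\ZZ[i]^3$ using that $v$ is primitive of norm $\lambda$). Proving that this binary hermitian form over $\ZZ[i]$ represents $\lambda$ is the representation-theoretic heart of the argument: one can verify it by a local-global argument, exploiting that $\lambda$ is itself a norm from $\ZZ[i]$. I expect this to be the main obstacle.

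\emph{Cross product and third column.} Given $v$ and $w$, set $z := \overline v \times \overline w \in \ZZ[i]^3$. A Lagrange-identity calculation gives
\[
z^*z = (v^*v)(w^*w) - |v^*w|^2 = \lambda^2, \qquad v^*z = w^*z = 0,
\]
so $z$ spans the one-dimensional orthogonal complement of $\mathrm{span}(v,w)$ in $\CC^3$, and the third column must be of the form $u = z/\pi'$ for some $\pi' \in \ZZ[i]$ with $|\pi'|^2 = \lambda$. To prove such a divisibility, work modulo each Gaussian prime $\mathfrak p \mid \pi'$: the congruences $v^*v, v^*w, w^*w \equiv 0 \pmod{\mathfrak p}$ say that the reductions $\tilde{\overline v}, \tilde{\overline w}$ are orthogonal to $\mathrm{span}(\tilde v, \tilde w)$ in $(\ZZ[i]/\mathfrak p)^3$ under the standard bilinear pairing. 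If $\tilde v, \tilde w$ are linearly independent modulo $\mathfrak p$ (arrangeable via primitivity of $v$ and a slight adjustment of $w$), then $\tilde{\overline v}, \tilde{\overline w}$ span at most a $1$-dimensional subspace and their cross product vanishes, i.e.\ $\mathfrak p \mid z$. Careful accounting for prime power multiplicities (via the identity $|\alpha|^2 \cdot u_0^*u_0 = \lambda^2$, where $\alpha \in \ZZ[i]$ is the content of $z$ and $u_0$ a primitive generator of the rank-$1$ lattice $(\ZZ[i]v+\ZZ[i]w)^\perp \cap \ZZ[i]^3$) then yields $\pi' \mid z$ for an appropriate $\pi'$ of norm $\lambda$, and $u := z/\pi'$ has $u^*u = \lambda$, completing the $3$-icube $(v \mid w \mid u)$.
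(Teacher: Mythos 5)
Your overall architecture (reduce to primitive $v$, pass to the rank-$2$ lattice $v^\perp\cap\ZZ[i]^3$ whose restricted hermitian form has discriminant $\lambda$, then finish with a cross-product-divided-by-a-Gaussian-integer argument) is sound and broadly parallel to the paper's, but the proposal has a genuine gap exactly at the step you yourself flag as "the main obstacle": the existence of $w\in\ZZ[i]^3$ with $v^*w=0$ and $|w|^2=\lambda$, i.e.\ that the binary hermitian form on $v^\perp$ integrally represents $\lambda$. Saying "one can verify it by a local-global argument, exploiting that $\lambda$ is a norm from $\ZZ[i]$" does not close this: representation of an integer by a \emph{single} binary hermitian form over $\ZZ[i]$ is not a purely local question (local solvability only governs representation by the genus), so some global input is indispensable. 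In the paper this is precisely where all the work lies: orthoregular pairs in $v^\perp$ of type $\delta$ are put in bijection with factorizations $uv=\nu(\beta+\delta j)$ in the quaternion order $S=\{r+sj\mid r,s\in\ZZ[i]\}$ with $|u|^2=\alpha\nu$ (Propositions~\ref{factorizationthm} and~\ref{integralfactorizationthm}), and existence of such factorizations is established by Lemma~\ref{enoughdivlemma} ("$S$ has enough divisors"), whose proof uses the one-sided Euclidean/principal-ideal property of the Hurwitz order. Note also that the paper produces the pair $(a_2,a_3)$ simultaneously via the rigidity $a_3=(Ma_2)_\perp/\delta$, rather than first finding $w$ and then dividing a cross product; the class-number-one input you would need in a local-global approach is exactly what the quaternionic argument supplies.

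The second half of your plan (divisibility of $z=\overline v\times\overline w$ by some $\pi'$ of norm $\lambda$) is plausible but also only sketched: the prime-by-prime argument needs care at ramified and repeated primes, and the "careful accounting for prime power multiplicities" you defer is essentially the valuation-by-valuation construction of $\delta_0$ carried out in the proof of Proposition~\ref{integralfactorizationthm}(3) (which is where Lemma~\ref{quotientisnorm} and class number one of $\QQ(i)$ enter). Moreover, your identity $|\alpha|^2\,u_0^*u_0=\lambda^2$ only gives $|u_0|^2\mid\lambda^2$, whereas you need $|u_0|^2\mid\lambda$ with quotient a norm; establishing that is again the content of the orthoregular-basis machinery, not a formal consequence of the setup. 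So while the skeleton is compatible with the paper, the two steps that carry the arithmetic content are asserted rather than proven.
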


We also have
\begin{theorem}\label{4Ziv}
Any vector in $\ZZ[i]^4$  can be extended to a $4$-icube.
\end{theorem}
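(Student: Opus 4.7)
Let $v = (z_1,z_2,z_3,z_4)^T \in \ZZ[i]^4$ and write $\lambda := |v|^2$. The plan is to first build a canonical orthogonal partner via a conjugate-linear involution, and then reduce the remaining task to a two-dimensional problem inside the Hermitian complement.

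Introduce the conjugate-linear map $T:\ZZ[i]^4\to\ZZ[i]^4$ given by $T(u_1,u_2,u_3,u_4):=(-\overline{u}_2,\overline{u}_1,-\overline{u}_4,\overline{u}_3)$. A direct computation yields $T^2=-\mathrm{id}$, $|T(u)|^2=|u|^2$, $u^*T(u)=0$, together with the intertwining identities $v^*T(u)=-\overline{v_2^*u}$ and $v_2^*T(u)=\overline{v^*u}$, where $v_2:=T(v)$. In particular $(v,v_2)$ is a $2$-icube of norm $\lambda$, and the lattice $L:=\{u\in\ZZ[i]^4:v^*u=v_2^*u=0\}$ is stable under $T$. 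Consequently, for any nonzero $u_0\in L$, the pair $(u_0,T(u_0))$ is a $2$-icube of norm $d:=|u_0|^2$ inside $L$.

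Once such a $u_0\in L$ has been chosen with $d\mid\lambda$, the extension follows easily. Indeed, since $|\alpha|^2+|\beta|^2=a_1^2+a_2^2+b_1^2+b_2^2$ for $\alpha=a_1+a_2i$ and $\beta=b_1+b_2i$ in $\ZZ[i]$, Lagrange's four-square theorem allows us to pick $\alpha,\beta\in\ZZ[i]$ with $|\alpha|^2+|\beta|^2=\lambda/d$. Then $v_3:=\alpha u_0+\beta T(u_0)$ and $v_4:=T(v_3)=-\overline{\beta}u_0+\overline{\alpha}T(u_0)$ are both of norm $\lambda$ and mutually orthogonal (and both orthogonal to $v$ and $v_2$ since they lie in $L$). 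Thus $(v,v_2,v_3,v_4)$ is the required $4$-icube.

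The main obstacle is therefore the lemma \emph{``the lattice $L$ always contains a nonzero element whose squared norm divides $\lambda$''}. My plan for this is first to reduce to the case where $v$ is primitive (the Gaussian-integer gcd of the coordinates is a unit) by pulling out the content ideal and rescaling $v,v_2,\lambda$ compatibly. For primitive $v$, the Smith normal form of the $2\times 4$ matrix with rows $v^*$ and $v_2^*$ over the PID $\ZZ[i]$ exhibits generators of $L$ and yields a discriminant-type identity relating $\det(\text{Gram of }L)$, $\lambda$, and the index $[\ZZ[i]^4:\ZZ[i]v\oplus\ZZ[i]v_2\oplus L]$. Combined with $T$-invariance, this should let one exhibit a short vector of $L$ whose norm is a divisor of $\lambda$, possibly after a case split depending on which of $N_1=|z_1|^2+|z_2|^2$ and $N_2=|z_3|^2+|z_4|^2$ vanish.
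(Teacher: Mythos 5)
Your reduction is set up correctly, and its first half coincides with the paper's: the conjugate-linear map $T$ is exactly how the paper extends a vector to the $2$-icube in \eqref{{eq:1-icube-to-2-icube-in-Z[i]4}}, and your verification that $L=\{u: v^*u=v_2^*u=0\}$ is $T$-stable, together with the Lagrange four-squares step producing $v_3=\alpha u_0+\beta T(u_0)$, $v_4=T(v_3)$ of norm $\lambda$, is sound. So the theorem would indeed follow from your italicized lemma that $L$ contains a nonzero $u_0$ with $|u_0|^2\mid\lambda$.

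But that lemma is precisely the heart of the matter, and you have only a sketch for it, not a proof; this is a genuine gap. The Smith-normal-form/index computation you allude to gives the discriminant of the Hermitian form $Q$ on $L$ (the paper's Proposition~\ref{discQlemma} yields $\disc(Q)=\lambda^2/|d_2(v|T(v))|^2$), and reduction/Minkowski-type arguments for this rank-$2$ Hermitian lattice then produce vectors of norm of size $O(\lambda)$ — but nothing of that kind gives the \emph{divisibility} $|u_0|^2\mid\lambda$, which is an arithmetic statement about which integers the lattice represents, not a size statement. (Your case split on $N_1,N_2$ only handles degenerate cases where a standard basis vector lies in $L$; in general there is no obvious explicit element of $L$ with norm dividing $\lambda$.) In the paper this existence problem is exactly what consumes the machinery of Section~\ref{QuaternionSection}: integrality of the rescaled form $Q'=\frac{|d_2|^2}{\lambda}Q$ (Proposition~\ref{d2yx}, via the cross-product vectors $x_h$ and Lemma~\ref{defLemma}), and then the correspondence between orthoregular bases and factorizations in the order $\mathbb{A}=\ZZ[i]+\ZZ[i]j$ together with the ``enough divisors'' property of Hurwitz quaternions (Lemma~\ref{enoughdivlemma}, Proposition~\ref{Orthoregbasisthm}(2)), which produces a vector of norm exactly $\lambda$ in $\Lambda$. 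Note that the only reason your lemma is true at all is that vectors of norm exactly $\lambda$ exist in $L$ — i.e.\ it is essentially equivalent to the theorem — so some input of this arithmetic strength (quaternionic factorization, or an analysis of which norms the binary Hermitian form of discriminant $\lambda^2/|d_2|^2$ represents) cannot be avoided; as it stands your argument defers the entire difficulty to an unproven claim.
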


Note that while one can extend any vector $v\in\ZZ^4$ to a $4$-icube by permuting its coordinates and adding signs, over $\ZZ[i]$ we have no such a construction. The proof is based on the structure of factorizations of integral quaternions, and allows to extend any icube (not only vectors).

\subsection{Extension of icubes in $\ZZ[i]^3$ and $\ZZ[i]^4$}

Now we turn to the more refined question of extending icubes instead of vectors.

\begin{theorem}[follows from {\cite[Theorem~1.1]{KissKutas}}]\label{3Z}
Any icube in $\ZZ^3$ of square norm can be extended to a $3$-icube.
\end{theorem}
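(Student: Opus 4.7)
The plan is to split on the size of the icube. A $3$-icube extends itself trivially, and the $1$-icube case is precisely Theorem~\ref{3Zv}. The new content is the $2$-icube case: given orthogonal $v_1, v_2 \in \ZZ^3$ with $|v_1|^2 = |v_2|^2 = N^2$, produce $v_3 \in \ZZ^3$ orthogonal to both with $|v_3|^2 = N^2$.

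Since the orthogonal complement of $\mathrm{span}_{\RR}(v_1, v_2)$ in $\RR^3$ is a line spanned by the cross product $v_1 \times v_2$, and $|v_1 \times v_2|^2 = |v_1|^2|v_2|^2 - (v_1\cdot v_2)^2 = N^4$, the unique candidate (up to sign) for $v_3$ is
\[
v_3 := \frac{1}{N}\, (v_1 \times v_2) \in \QQ^3.
\]
The orthogonality relations and the required norm hold automatically, so the entire content of the theorem reduces to the integrality claim $v_3 \in \ZZ^3$, equivalently to the statement that $N$ divides each of the three $2 \times 2$ minors of the matrix $(v_1 \mid v_2)$.

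I would verify this divisibility prime-by-prime. Fix a prime $p$ with $p^a \parallel N$ and pass to $\ZZ_p$. The Smith normal form of $(v_1 \mid v_2) \in \ZZ^{3 \times 2}$ has elementary divisors $d_1 \mid d_2$, with $d_1 d_2$ equal to the greatest common divisor of the $2 \times 2$ minors. The relation $(v_1 \mid v_2)^T (v_1 \mid v_2) = N^2 I_2$, combined with a local analysis of the ternary quadratic form $x_1^2 + x_2^2 + x_3^2$ on $\ZZ_p^3$, should force $p^a$ to divide $d_1 d_2$, yielding the desired divisibility of the coordinates of $v_1 \times v_2$ by $N$.

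The hard part will be the prime $p = 2$, since $x_1^2 + x_2^2 + x_3^2$ is degenerate modulo $2$ and its behaviour modulo higher powers of $2$ requires extra care — compare the parity obstruction for odd-dimensional icubes with all-odd coordinates noted in the introduction. A possible detour that sidesteps the delicate $2$-adic bookkeeping is to first invoke Theorem~\ref{3Zv} to complete $v_1$ to a $3$-icube $(v_1, u_2, u_3)$, then expand $v_2 = \alpha u_2 + \beta u_3$ with $\alpha, \beta \in \QQ$ and $\alpha^2 + \beta^2 = 1$, and use the vector triple product identity to express $v_1 \times v_2$ explicitly in the basis $(u_2, u_3)$; the integrality of $v_3$ then reduces to a congruence check driven by a Pythagorean parametrization of the rational point $(\alpha, \beta)$ on the unit circle.
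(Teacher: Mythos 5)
Your proposal is correct, but its two branches are not equally finished: the prime-by-prime SNF analysis is only a heuristic (``should force'') and, as you concede, is genuinely delicate at $p=2$, so the ``detour'' has to carry the proof --- and it does. After completing $v_1$ to a $3$-icube $(v_1,u_2,u_3)$ by Theorem~\ref{3Zv} and writing $v_2=\alpha u_2+\beta u_3$, one has $v_1\times u_2=\pm N u_3$ and $v_1\times u_3=\mp N u_2$, so $v_3=\tfrac1N(v_1\times v_2)=\pm(\alpha u_3-\beta u_2)$; writing $\alpha=r/q$, $\beta=s/q$ with $\gcd(r,s,q)=1$, the relation $r^2+s^2=q^2$ gives $\gcd(r,q)=\gcd(s,q)=1$, and multiplying the congruence $ru_2+su_3\equiv 0\ (\mathrm{mod}\ q)$ (coming from $qv_2\in q\ZZ^3$) by $r$ and using $r^2\equiv -s^2\ (\mathrm{mod}\ q)$ yields $-su_2+ru_3\equiv 0\ (\mathrm{mod}\ q)$, i.e.\ $v_3\in\ZZ^3$; no Pythagorean parametrization is needed. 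This is a genuinely different route from the paper's: there, both the $1$-icube and $2$-icube cases are handled uniformly by restricting the standard form to the rank-two lattice $\Lambda=\{w\in R^3\mid a_1^*w=0\}$, computing $\disc(Q)=\lambda$ via Proposition~\ref{discQlemma}, and invoking the orthoregular-basis machinery (Proposition~\ref{Orthoregbasisthm}, ultimately factorization in the Gaussian integers), so the paper in particular reproves Theorem~\ref{3Zv} rather than citing it. Your argument buys brevity and elementarity for the $2$-icube step and bypasses Section~3 entirely, at the price of using Theorem~\ref{3Zv} as a black box and of being special to the real three-dimensional situation: the cross-product/unit-circle trick does not transfer to $\ZZ[i]^3$ (Theorem~\ref{3Zi}, where no analogue of Theorem~\ref{3Zv} is available in advance) or to dimension $4$, which is exactly what the paper's quaternionic approach is built to handle.
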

\begin{theorem}[{\cite[Theorem~1.2]{KissKutas}}]\label{4Z}
Any icube in $\ZZ^4$ can be extended to a $4$-icube.
\end{theorem}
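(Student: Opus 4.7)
The plan is to identify $\ZZ^4$ with the Lipschitz order $L=\ZZ\langle 1,i,j,k\rangle$ inside the real quaternion algebra $\mathbb H$ and split the argument according to the number of columns $k\in\{1,2,3\}$ of the given icube.

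\emph{One column.} If $v\in L$ has $|v|^2=\lambda$, the tuple $(v,vi,vj,vk)$ is a 4-icube in $L$ extending $v$: right-multiplication by $i,j,k$ preserves $L$ and is an $\RR$-linear isometry, so it sends the orthogonal quadruple $(1,i,j,k)$ of common length $1$ to an orthogonal quadruple of common length $\sqrt\lambda$.

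\emph{Three columns.} Given $M=(v_1|v_2|v_3)\in\ZZ^{4\times 3}$ with $M^TM=\lambda I_3$, I would define $v_4\in\ZZ^4$ via the signed $3\times 3$ minors of $M$, so that $v_4\perp v_i$ for each $i$ and, by Cauchy--Binet, $|v_4|^2=\det(M^TM)=\lambda^3$. The orthogonal projectors onto the column space of $M$ and onto $\RR v_4$ are $MM^T/\lambda$ and $v_4v_4^T/\lambda^3$ respectively; since they sum to $I_4$, one obtains the matrix identity
\[
 v_4v_4^T \;=\; \lambda^2\bigl(\lambda I_4-MM^T\bigr)\;\in\;\lambda^2\cdot\ZZ^{4\times 4}.
\]
Reading a diagonal entry, $\lambda^2\mid v_{4,i}^2$ in $\ZZ$, whence $\lambda\mid v_{4,i}$ for every $i$. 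Thus $w:=v_4/\lambda\in\ZZ^4$ satisfies $|w|^2=\lambda$, and $(v_1|v_2|v_3|w)$ is the desired 4-icube.

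\emph{Two columns.} Given $(v_1|v_2)$ of norm $\lambda$, it suffices to produce $v_3\in\ZZ^4$ with $v_3\perp v_1,v_2$ and $|v_3|^2=\lambda$, then invoke the three-column case. The natural quaternion candidate $u:=\bar v_1v_2/\lambda\in\mathbb H$ is pure imaginary of norm $1$, but it need not lie in $L$ (for example $v_1=2+i$, $v_2=j+2k$, $\lambda=5$ give $u=(4j+3k)/5$), so $v_3=v_1u$ is not automatically integral. I expect this to be the main obstacle: it amounts to showing that the rank-2 positive-definite lattice $\Lambda^\perp:=\{u\in\ZZ^4:u\perp v_1,\,u\perp v_2\}$ represents $\lambda$. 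My attack would be to analyze the discriminant of $\Lambda^\perp$ (it matches that of the saturation of $\ZZ v_1+\ZZ v_2$ inside $\ZZ^4$, hence divides $\lambda^2$) and then combine a local solvability check with the arithmetic of the Hurwitz order $H\supset L$, in which every left ideal is principal: an $H$-factorization of a norm-$\lambda$ element in the left ideal $Hv_1+Hv_2$ together with a right-multiplication by an appropriate Hurwitz unit should produce the required $v_3\in L$.
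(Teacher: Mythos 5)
Your one-column and three-column arguments are correct, and the three-column step (generalized cross product $v_4$, Cauchy--Binet, and the projector identity $v_4v_4^T=\lambda^2(\lambda I_4-MM^T)$ forcing $\lambda\mid (v_4)_i$) is a clean, elementary route that differs from the paper, which runs even the $k=3$ case through its orthoregular-basis machinery. The quaternionic extension $(v,vi,vj,vk)$ for a single column is also fine.

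The genuine gap is the two-column case, which is exactly the heart of the theorem, and your proposal only sketches it. Reducing to the statement ``the rank-$2$ lattice $\Lambda^\perp=\{u\in\ZZ^4: u\perp v_1,v_2\}$ represents $\lambda$'' is the right reformulation, but neither of your proposed tools closes it. A local solvability check cannot suffice on its own: a positive definite binary form can represent an integer everywhere locally and still fail to represent it globally, because the genus of such a form generally contains several classes; so one needs an argument specific to this lattice, not just its discriminant. And the Hurwitz-order idea (``an $H$-factorization of a norm-$\lambda$ element in $Hv_1+Hv_2$ together with a unit adjustment should produce $v_3\in L$'') is stated as a hope, with no construction of an element of norm $\lambda$ that is simultaneously integral and orthogonal to both $v_1$ and $v_2$ -- your own example $u=(4j+3k)/5$ shows why integrality is the issue. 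The paper has to work here: it proves that the rescaled form $Q'=\frac{d_2(A_0)^2}{\lambda}Q$ on $\Lambda^\perp$ is integral of discriminant $d_2(A_0)^2$ (via the four cross-product vectors $x_h$ and the identity $x_h^*x_g\in\lambda\ZZ$), then shows no prime $q\equiv 3\pmod 4$ can divide all values of $Q'$ to odd order (using $\sum_{e=1}^4(\lambda-\alpha_e^2-\beta_e^2)=2\lambda$ and primitivity, after dividing out $d_1(A_0)$), and only then invokes the factorization machinery in the order $\ZZ[j]$ (``enough divisors'') to produce an integral $Q'$-orthoregular vector of norm $d_2(A_0)^2$, i.e.\ a vector of $\Lambda^\perp$ of norm $\lambda$. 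Some substitute for this chain -- in particular for the sum-of-two-squares obstruction analysis -- is missing from your proposal, so as it stands the proof is incomplete.
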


We prove the $\ZZ[i]$-analogues of the above statements:
\begin{theorem}\label{3Zi}
Assume $\lambda$ is the sum of two rational squares. Then any icube in $\ZZ[i]^3$ of norm $\lambda$ can be extended to a $3$-icube.
\end{theorem}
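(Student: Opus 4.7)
The plan is to dispose of the $k=1$ case via Theorem~\ref{3Ziv} and to extend a $2$-icube $(v_1\mid v_2)$ by constructing the missing column explicitly. Set $w:=\overline{v_1\times v_2}\in\ZZ[i]^3$; standard cross-product identities give $v_1^*w=v_2^*w=0$ and $|w|^2=|v_1|^2|v_2|^2-|v_1^*v_2|^2=\lambda^2$. Letting $d:=\gcd(w_1,w_2,w_3)$ in $\ZZ[i]$, any $c\in\ZZ[i]$ with $|c|^2=|d|^2/\lambda$ produces $v_3:=c\cdot(w/d)\in\ZZ[i]^3$ with $v_3\perp v_1,v_2$ (Hermitian) and $|v_3|^2=|c|^2\cdot|w|^2/|d|^2=\lambda$. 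The theorem thus reduces to producing such a $c$, i.e.\ to showing that $|d|^2/\lambda$ is a non-negative rational integer that is a sum of two rational squares.

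The key identity comes from $A:=(v_1\mid v_2\mid w)$, whose Gram matrix is $A^*A=\diag(\lambda,\lambda,\lambda^2)$. Since this is invertible, $A\cdot\diag(\lambda^{-1},\lambda^{-1},\lambda^{-2})\cdot A^*=I_3$, and reading off the $(i,j)$ entry yields
\[
w_i\overline{w_j}=\lambda^2\delta_{ij}-\lambda\bigl((v_1)_i\overline{(v_1)_j}+(v_2)_i\overline{(v_2)_j}\bigr)
\]
for all $i,j\in\{1,2,3\}$. In particular $\lambda\mid w_i\overline{w_j}$ in $\ZZ[i]$.

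Promoting this to $\lambda\mid|d|^2$ in $\ZZ$ is the crux, and is verified prime by prime in $\ZZ[i]$. At an inert prime $\pi=p$ (so $p\equiv 3\pmod 4$) or the ramified prime $\pi=1+i$, the two-squares hypothesis forces $v_\pi(\lambda)$ to be even (automatic for $\pi=1+i$), and $\lambda\mid|w_i|^2$ gives $2v_\pi(w_i)\geq v_\pi(\lambda)$ for each $i$, so $v_\pi(d)\geq v_\pi(\lambda)/2$ and $v_\pi(|d|^2)=2v_\pi(d)\geq v_\pi(\lambda)$. At a split prime $\pi\neq\overline\pi$, choose indices $i^*,j^*$ minimising $v_\pi(w_{i^*})$ and $v_{\overline\pi}(w_{j^*})$ respectively; then $\lambda\mid w_{i^*}\overline{w_{j^*}}$ yields $v_\pi(d)+v_{\overline\pi}(d)\geq v_\pi(\lambda)$, i.e.\ $v_\pi(|d|^2)\geq v_\pi(\lambda)$. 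Given $\lambda\mid|d|^2$, the quotient $|d|^2/\lambda$ automatically has even $p$-adic valuation at each $p\equiv 3\pmod 4$, hence is a sum of two rational squares by Fermat's theorem, and the required $c$ exists. The main obstacle is precisely this prime-by-prime analysis; the two-squares hypothesis on $\lambda$ enters the argument through the parity of $v_p(\lambda)$ at the inert primes.
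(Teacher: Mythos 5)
Your argument is correct, and for the genuinely new content of the theorem it takes a different route from the paper. The paper handles both the $1$-icube and $2$-icube cases uniformly by passing to the rank-two lattice $\Lambda=\{w\in\ZZ[i]^3\mid a_1^*w=0\}$, restricting the inner product to get a binary hermitian form $Q$ of discriminant $\lambda$, and invoking Proposition~\ref{Orthoregbasisthm} (parts (2) and (3)), i.e.\ the Section~\ref{QuaternionSection} correspondence between orthoregular bases and factorizations in the order $\mathbb{A}=\{r+sj\mid r,s\in\ZZ[i]\}$, together with the ``enough divisors'' property of Lemma~\ref{enoughdivlemma}. You instead treat the $2$-icube case directly in $\ZZ[i]^3$: the conjugated cross product $w$, the identity $A\,\diag(\lambda^{-1},\lambda^{-1},\lambda^{-2})\,A^*=I$ giving $\lambda\mid w_i\overline{w_j}$ (the rank-$2$ analogue of Lemma~\ref{defLemma}), a valuation argument at inert, ramified and split Gaussian primes showing $\lambda\mid|d|^2$ for $d=\gcd(w_1,w_2,w_3)$, and the two-squares theorem to manufacture the scalar $c$; all steps check out, and your prime-by-prime analysis is close in spirit to the one hidden inside Proposition~\ref{integralfactorizationthm}(3), but you bypass the orthoregular-basis/quaternion formalism entirely. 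What you lose is scope: your construction only proves existence of an extension (no control of the type $\delta$, hence none of the Smith-normal-form refinements such as Proposition~\ref{3Zip}), it does not simultaneously yield the $\ZZ$-case (Theorem~\ref{3Z}) or feed into the dimension-$4$ arguments, and — most importantly — it contributes nothing new for the $1$-icube case: you delegate that to Theorem~\ref{3Ziv}, which is legitimate as a previously stated result, but be aware that in the paper Theorem~\ref{3Ziv} has no independent proof — it \emph{is} the $k=1$ case of this theorem, and it is exactly where the quaternionic factorization machinery is genuinely used. So your proposal should be read as a clean, elementary replacement for the $k=2$ step, conditional on Theorem~\ref{3Ziv} for vectors.
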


\begin{theorem}\label{4Zi}
Any icube in $\ZZ[i]^4$  can be extended to a $4$-icube.
\end{theorem}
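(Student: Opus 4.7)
The plan is to extend a given icube one column at a time. Since Theorem~\ref{4Ziv} supplies the $k=1$ extension, it suffices to show that any $k$-icube $M\in\ZZ[i]^{4\times k}$ of norm $\lambda$ with $k\in\{2,3\}$ admits one further orthogonal Gaussian-integer column of squared Hermitian norm $\lambda$; iteration then yields the $4$-icube.

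For $k=3\to 4$: the $\QQ(i)$-orthogonal complement of the column span of $M$ is one-dimensional, so its intersection with $\ZZ[i]^4$ is a free rank-$1$ $\ZZ[i]$-submodule $\ZZ[i]u$ with $u$ primitive. Writing $\mu_j\in\ZZ[i]$ for the $3\times 3$ minor of $M$ obtained by deleting row $j$ and $g=\gcd_{\ZZ[i]}(\mu_1,\dotsc,\mu_4)$, Cramer's rule identifies $u$ up to a unit with $(\overline{\mu_j}/g)_{j=1}^{4}$; the Cauchy--Binet identity applied to $\det(M^*M)=\lambda^3$ yields $\sum_j|\mu_j|^2=\lambda^3$, hence $|u|^2=\lambda^3/|g|^2$, while a Schur-complement evaluation of $\det((M|e_j)^*(M|e_j))$ gives the auxiliary identity $|\mu_j|^2=\lambda^2(\lambda-\rho_j)$, where $\rho_j=\sum_k|M_{jk}|^2$ is the squared norm of the $j$-th row. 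The extension follows once one proves that $|u|^2$ divides $\lambda$ and that the quotient $\lambda/|u|^2=|g|^2/\lambda^2$ is a sum of two rational squares, for then $\alpha u$ with $|\alpha|^2=\lambda/|u|^2$ serves as the fourth column.

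For $k=2\to 3$: one reduces to the previous step by applying Theorem~\ref{4Ziv} to $v_1$ to obtain a $4$-icube $(v_1|w_2|w_3|w_4)$, noting that the Gaussian-integer triple $(w_j^*v_2)_{j=2}^{4}\in\ZZ[i]^3$ has squared norm $\lambda^2$ (trivially a sum of two squares); Theorem~\ref{3Zi} then extends it to a $3$-icube of norm $\lambda^2$ in $\ZZ[i]^3$, and the extending columns can be transported back through the frame $(w_2,w_3,w_4)$, divided by $\lambda$, and shown (via the integrality analysis below) to produce an integer third column orthogonal to $(v_1,v_2)$ of norm $\lambda$. The main obstacle is the divisibility and sum-of-two-squares claim for $|g|^2/\lambda^2$ in the $k=3$ step: the identity $|\mu_j|^2=\lambda^2(\lambda-\rho_j)$ immediately forces $v_\pi(\mu_j)\geq v_\pi(\lambda)$ at every inert or ramified prime $\pi$ of $\ZZ[i]$, but at a split prime with $\pi\bar\pi=p$ one only gets $v_\pi(\mu_j)+v_{\bar\pi}(\mu_j)\geq 2v_p(\lambda)$, which is insufficient for componentwise divisibility (for instance, the diagonal $3$-icube with $v_j=(2+i)e_j$ for $j=1,2,3$ and $\lambda=5$ has $g=(2+i)^3$ with $\lambda\nmid g$). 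Overcoming this invokes the quaternion-algebra machinery behind Theorem~\ref{4Ziv}: via the isomorphism $M_2(\QQ(i))\cong\mathbb{H}\otimes_\QQ\QQ(i)$ and the factorization of $\lambda$ inside a suitable maximal order, the quotient $|g|^2/\lambda^2$ is forced to be the norm of a Gaussian integer, which also underpins the integrality claim in the $k=2$ reduction.
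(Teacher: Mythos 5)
Your reductions are set up correctly (the Cauchy--Binet and Schur-complement identities for the $k=3\to4$ step check out, and the frame-transport reformulation of the $k=2\to3$ step is a valid reformulation), but the proof has a genuine gap exactly where you locate the difficulty: you never prove that $|g|^2/\lambda^2$ is an integer that is the norm of a Gaussian integer, nor the integrality of the transported column $(w_2|w_3|w_4)s/\lambda$. The closing appeal to ``the quaternion-algebra machinery behind Theorem~\ref{4Ziv}'' is not an argument: no lemma is stated, no factorization is produced, and it is not explained how $M_2(\QQ(i))\cong\mathbb{H}\otimes_\QQ\QQ(i)$ or a maximal order forces the quotient to be a Gaussian norm. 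It is also essentially circular in the context of this paper: Theorem~\ref{4Ziv} (extension of a single vector) is the easy $k=1$ case, settled by the explicit matrix \eqref{{eq:1-icube-to-2-icube-in-Z[i]4}}, and carries no machinery capable of handling the $k=2,3$ cases; in the paper Theorems~\ref{4Ziv} and~\ref{4Zi} are proved by one and the same argument, so the content you defer to Theorem~\ref{4Ziv} is precisely the content that still has to be supplied. Your own split-prime observation (e.g.\ the example with $g=(2+i)^3$, $\lambda=5$) shows that naive valuation-counting does not close this gap, so the remaining claim is not a routine verification but the heart of the theorem.

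What actually closes it in the paper is a different, concrete mechanism: for a $2$-icube $A_0=(a_1|a_2)$ one restricts the inner product to the rank-$2$ lattice $\Lambda=\{w:a_1^*w=a_2^*w=0\}$, proves via the cross-product vectors $x_h$ (Lemmas~\ref{generatelemma} and~\ref{defLemma}, giving Proposition~\ref{d2yx}) that the rescaled form $Q'=\frac{|d_2(A_0)|^2}{\lambda}Q$ is integral with square discriminant $|d_2(A_0)|^2$, and then invokes Proposition~\ref{Orthoregbasisthm}, i.e.\ the correspondence between integral orthoregular bases and factorizations $uv=\nu(\beta+\delta\sqrt{\varepsilon}j)$ with prescribed $|u|^2$ in the order $\{r+sj\mid r,s\in\ZZ[i]\}$, whose ``enough divisors'' property is proved through the Hurwitz quaternions (Lemma~\ref{enoughdivlemma}); the $k=3$ case then follows from Proposition~\ref{Orthoregbasisthm}(3) applied to $a_3\in\Lambda$. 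If you want to salvage your column-by-column scheme, you would need to prove, by some such explicit factorization argument, your divisibility-and-norm claim for $|g|^2/\lambda^2$ and the integrality claim in the $k=2$ step; as written, these are asserted, not proved.
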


One can view that these theorems depend on the structure of the integer quaternions: In most of the earlier works over $\mathbb Z$ the proofs are directly based on the algebraic structure of the (integral) quaternions or Euler-matrices. We present a new approach (somewhat similar to that of \cite{Horvath}), which makes the connection of the icubes and the number theory of integral quaternions (or Gaussian integers over $\mathbb Z$) more apparent.

Based on some computer calculations, we have the following:
\begin{conjecture}\label{con8i}
Any icube in $\ZZ^8$ can be extended to an $8$-icube.
\end{conjecture}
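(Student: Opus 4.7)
The natural approach is to parallel the integral-quaternion argument that yields Theorem~\ref{4Z}, with Coxeter's order $\mathcal{O}_{\mathrm{Cox}}$ of integral octonions playing the role of the Hurwitz quaternions. Equip $\ZZ^8$ with the Cayley--Graves octonion multiplication so that it becomes a (non-associative) ring $\mathcal O$ whose norm form coincides with the standard Euclidean inner product. For any $v \in \mathcal O$, the left-multiplication map $L_v\colon x \mapsto vx$ is a $\ZZ$-linear orthogonal similarity of ratio $|v|$, and its matrix in the basis $(1, e_1, \ldots, e_7)$ is an $8$-icube of norm $|v|^2$ whose first column is $v$ itself. This already handles the $k = 1$ case and plays the role of the quaternionic left-regular construction used for $\ZZ^4$.

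For a general $k$-icube $(v_1 \mid \ldots \mid v_k)$ of norm $\lambda$, the next step would be to write $v_j = v_1 \cdot w_j$ for $j \geq 2$, reducing the problem to extending the orthonormal system $(1, w_2, \ldots, w_k)$ of integral octonions in which the first vector is the unit $1$. Since $w_2, \ldots, w_k$ then lie in the $7$-dimensional orthogonal complement of $1$ in $\mathcal O$, one could hope to conclude by a descent argument inside this hyperplane, invoking Theorems~\ref{3Z} and \ref{4Z} on $3$- and $4$-dimensional subconfigurations obtained from triples or quadruples of the $w_j$'s together with standard basis vectors. Over $\QQ$ the decomposition $w_j = |v_1|^{-2}\, \overline{v_1}\, v_j$ is immediate, so the crux is to arrange the $w_j$ to lie simultaneously in $\mathcal O$, possibly after a controlled twist by one of the $240$ units of $\mathcal{O}_{\mathrm{Cox}}$.

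The hard part will be the non-associativity of octonion multiplication. In the quaternionic case, orthogonality of $v_j$ and $v_\ell$ translates cleanly into orthogonality of $w_j$ and $w_\ell$ because $(v_1 w_j)^*(v_1 w_\ell) = w_j^* |v_1|^2 w_\ell$; for octonions the corresponding identity requires the Moufang relations and does not preserve integrality as crisply when iterated. A secondary difficulty is the failure of unique factorization in $\mathcal{O}_{\mathrm{Cox}}$, which can plausibly be absorbed by exploiting the transitive action of $\mathcal{O}_{\mathrm{Cox}}^\times$ and the symmetry of the $E_8$ root system on vectors of fixed norm. Since the conjecture rests only on computer evidence, it is entirely possible that a genuinely new ingredient beyond the quaternion case is needed: perhaps a global $2$-adic normalization of $v_1$ (mirroring the role of $1+i$ in the Gaussian obstructions of Proposition~\ref{2kp1i}), or a finite reduction through the automorphism group of the $E_8$ lattice that circumvents explicit factorization altogether.
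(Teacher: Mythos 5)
The statement you are addressing is Conjecture~\ref{con8i}: the paper does not prove it, but states it as an open conjecture supported only by computer calculations. So there is no proof in the paper to match, and your text is not a proof either --- it is a programme sketch, and you concede as much in the last sentence. The one part you do establish, the $k=1$ case via left multiplication $L_v$ on the Cayley--Graves order, is correct but already known and quoted in the paper's introduction (the octonionic construction for $n=8$); the conjectural content is precisely the extension of $k$-icubes with $k\geq 2$, which your argument does not reach.

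The concrete gap is the reduction step. If $(v_1|\dotsc|v_k)$ is a $k$-icube of norm $\lambda=|v_1|^2$ and you write $v_j=v_1\cdot w_j$, then multiplicativity of the norm forces $|w_j|=1$, i.e.\ each $w_j$ would have to be a \emph{unit} of the order; equivalently, $v_1$ would have to left-divide every $v_j$. For a generic icube this fails outright: $w_j=\lambda^{-1}\overline{v_1}\,v_j$ is a rational octonion of norm $1$ that is almost never integral, and twisting by one of the $240$ units of Coxeter's order cannot create divisibility that is not there (a unit twist changes the frame, not the ideal generated by $v_1$). So the whole descent never gets off the ground, independently of the Moufang/associativity issues you flag. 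Moreover, even where such a reduction is available, the orthogonal complement of $1$ inside the integral octonions is a $7$-dimensional lattice with a non-standard form (an $E_7$-type lattice for the Coxeter order), not $\ZZ^7$ with the standard inner product, so Theorems~\ref{3Z} and \ref{4Z} cannot be invoked on ``subconfigurations'' as stated. The route suggested by the paper's own methods would instead be to study the lattice $\Lambda$ orthogonal to the given columns and the induced hermitian/quadratic form, and to prove an octonionic analogue of the factorization results of Section~\ref{QuaternionSection} (an ``enough divisors'' statement for a suitable octonion order); non-associativity and the failure of unique factorization are exactly the obstructions there, and overcoming them would require the genuinely new ingredient you anticipate. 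As it stands, Conjecture~\ref{con8i} remains open.
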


This would imply
\begin{proposition}
Let $n<8$, $k:=(4,n)$ and assume Conjecture \ref{con8i}. Then any icube in $\ZZ^n$ of norm $\lambda$ satisfying $\lambda$ is a sum of $k$ squares can be extended to a $n$-icube.
\end{proposition}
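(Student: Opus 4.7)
My plan is to embed the given icube into $\ZZ^8$, invoke Conjecture~\ref{con8i} there, and then restrict the resulting $8$-icube back to $\ZZ^n$. Let $A\in\ZZ^{n\times\ell}$ be the given $\ell$-icube of norm $\lambda$; I may assume $\ell<n$, for otherwise there is nothing to extend.

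First I would construct a full-rank companion $(8-n)$-icube $I'\in\ZZ^{(8-n)\times(8-n)}$ of the same norm $\lambda$. Since $(4,n)=(4,8-n)$ whenever $n\le 8$, the hypothesis that $\lambda$ is a sum of $k=(4,n)$ squares is exactly what is needed to carry out this construction case by case: if $n$ is odd then $\lambda=m^2$ and one takes $I':=mI_{8-n}$; if $n\in\{2,6\}$ then $\lambda=a^2+b^2$ and $I'$ is the block-diagonal matrix consisting of $(8-n)/2$ copies of $\left(\begin{smallmatrix}a&-b\\b&a\end{smallmatrix}\right)$; and if $n=4$ then $\lambda$ is a sum of four squares by Lagrange and the standard Hurwitz quaternion matrix yields an $I'\in\ZZ^{4\times 4}$.

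Next I would form the block matrix
\[
B_0 := \begin{pmatrix} A & 0 \\ 0 & I' \end{pmatrix} \in \ZZ^{8\times(\ell+8-n)},
\]
whose two nonzero blocks occupy disjoint row-sets; this makes $B_0$ an $(\ell+8-n)$-icube in $\ZZ^8$ of norm $\lambda$ with $\ell+8-n\le 8$, so Conjecture~\ref{con8i} produces an $8$-icube $B=(B_0\mid C)$ of norm $\lambda$ with $C\in\ZZ^{8\times(n-\ell)}$ collecting the new columns. I would then argue that each column $c$ of $C$, split as $\binom{c_1}{c_2}$ with $c_1\in\ZZ^n$ and $c_2\in\ZZ^{8-n}$, satisfies $I'^Tc_2=0$ by orthogonality against the second block of $B_0$; since $I'^TI'=\lambda I_{8-n}$ with $\lambda>0$, the matrix $I'^T$ is invertible, hence $c_2=0$. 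Writing $C=\binom{C'}{0}$ with $C'\in\ZZ^{n\times(n-\ell)}$, the orthogonality relations inherited from $B$ then say exactly that $(A\mid C')$ is an $n$-icube in $\ZZ^n$ extending $A$.

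The main content of the plan is the explicit construction of $I'$ in step one, which is where the hypothesis on $\lambda$ is used; the remaining steps are routine block-matrix bookkeeping. I do not anticipate any serious obstacle, as that construction splits into a short list of cases. The case $n=4$ demands the strongest input (Lagrange's four-square theorem together with the Hurwitz quaternion construction), while for odd $n$ the construction is essentially trivial and for $n\in\{2,6\}$ it uses only a $2\times 2$ rotation block.
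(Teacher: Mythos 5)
Your proposal is correct and follows essentially the same route as the paper: pad with $8-n$ zero coordinates, adjoin an $(8-n)$-icube of norm $\lambda$ supported on the new coordinates, extend via Conjecture~\ref{con8i}, and observe the new columns must vanish in the last coordinates. You merely make explicit (via the case-by-case construction using $(4,n)=(4,8-n)$) the existence of the companion $(8-n)$-icube, which the paper leaves implicit.
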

\begin{proof}
We append $8-n$ zero coordinates to all of the vectors and add a $8-n$-icube of norm $\lambda$ which has nonzero entries only in the newly added last coordinates. This is an icube in $\ZZ^8$, which can be extended by Conjecture \ref{con8i}, and the last coordinates have to be 0 because of the newly added vectors.
\end{proof}

Since $\mathbb Z[i]^{3\times 3}$ can be embedded into $\mathbb Z^{6\times 6}$ by mapping $a+bi$ to the block matrix $\left(\begin{smallmatrix}a & -b \\ b & a\end{smallmatrix}\right)$, one can reduce the problem in $\mathbb Z^6$ to that of $\mathbb Z[i]^3$. Then Theorems \ref{4kp2} and \ref{3Zi} yield

\begin{corollary}\label{6Z}
Let $v\in\mathbb Z^6$. Then $v$ is contained in a 6-icube if and only if $|v|^2$ is a sum of two squares.
\end{corollary}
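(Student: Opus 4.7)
The plan is to prove the two directions separately. For necessity, note that $6=4\cdot 1+2$, so Theorem~\ref{4kp2} applies directly and yields immediately that $|v|^2$ must be a sum of two rational squares whenever $v$ is contained in a $6$-icube in $\ZZ^6$. This half requires no further argument.

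For sufficiency, I would use the ring embedding $\iota\colon \ZZ[i]\hookrightarrow \ZZ^{2\times 2}$, $a+bi\mapsto \left(\begin{smallmatrix} a & -b \\ b & a \end{smallmatrix}\right)$, together with the $\RR$-linear identification $\phi\colon \ZZ[i]^3\to \ZZ^6$ sending a vector $u=a+bi$ (with $a,b\in\ZZ^3$) to the interleaved real vector $(a_1,b_1,a_2,b_2,a_3,b_3)^T$. A short calculation shows that for any $u,w\in\ZZ[i]^3$ one has $\phi(u)^T\phi(w)=\mathrm{Re}(u^*w)$ and $\phi(iu)^T\phi(w)=\mathrm{Im}(u^*w)$. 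Consequently, if $(u_1|u_2|u_3)$ is a $3$-icube in $\ZZ[i]^3$ of norm $\lambda\in\ZZ$, then $u_j^*u_k=\lambda\delta_{jk}$ is a real integer, and the six vectors $\phi(u_1),\phi(iu_1),\phi(u_2),\phi(iu_2),\phi(u_3),\phi(iu_3)\in\ZZ^6$ are pairwise orthogonal and each has squared norm $\lambda$; in other words, they form a $6$-icube.

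To conclude, given $v\in\ZZ^6$ with $|v|^2=\lambda$ a sum of two rational squares, I would set $u:=\phi^{-1}(v)\in\ZZ[i]^3$, which satisfies $u^*u=\lambda$. By Theorem~\ref{3Zi} (or already by Theorem~\ref{3Ziv}, since only one vector is being extended) the Gaussian integral vector $u$ extends to a $3$-icube $(u|u_2|u_3)$ in $\ZZ[i]^3$ of norm $\lambda$, using the hypothesis that $\lambda$ is a sum of two squares. The construction described above then produces a $6$-icube in $\ZZ^6$ whose first column is $\phi(u)=v$.

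The only point requiring care is the orthogonality verification in the middle step, which reduces to a direct computation separating the real and imaginary parts of Hermitian inner products; I do not foresee any conceptual obstacle, only the routine bookkeeping of matching up $\ZZ$-inner products on $\ZZ^6$ with $\ZZ[i]$-Hermitian inner products on $\ZZ[i]^3$.
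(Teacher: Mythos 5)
Your proof is correct and follows essentially the same route as the paper: necessity from Theorem~\ref{4kp2} with $6=4\cdot1+2$, and sufficiency by identifying $\ZZ^6$ with $\ZZ[i]^3$ via the real-imaginary interleaving (equivalently the block embedding $a+bi\mapsto\left(\begin{smallmatrix}a&-b\\b&a\end{smallmatrix}\right)$) and invoking Theorem~\ref{3Ziv}/\ref{3Zi}. The only difference is that you spell out the orthogonality bookkeeping that the paper leaves implicit.
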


\begin{example}
Let $a_1=(3,0,0,\dots,0)$, $a_2=(0,1,1,\dots,1)\in \mathbb Z^{10}$. Then $a_1$ and $a_2$ can be extended to a 10-icube, but $A=(a_1|a_2)$ cannot be extended to a 3-icube. Indeed, if $b_j:=(0,\dotsc,0,3,0,\dotsc,0)^T$ with the single $3$ standing in the $j$th entry gives the $10$-icube $(a_1|b_2|\dotsc|b_{10})$, while \[\begin{pmatrix}
0 & 1 & 1 & 1 & 1 & 1 & 1 & 1 & 1 & 1 \\
1 & -2 & -1 & 0 & 0 & 0 & 0 & 1 & 1 & 1 \\
1 & -1 & 1 & 0 & 0 & 1 & 1 & -2 & 0 & 0 \\
1 & 0 & 0 & -1 & 1 & 0 & 1 & 1 & -2 & 0 \\
1 & 0 & 0 & 1 & -1 & 1 & 0 & 1 & 0 & -2 \\
1 & 0 & 1 & 0 & 1 & -2 & 0 & 0 & 1 & -1 \\
1 & 0 & 1 & 1 & 0 & 0 & -2 & 0 & -1 & 1 \\
1 & 1 & -2 & 1 & 1 & 0 & 0 & -1 & 0 & 0 \\
1 & 1 & 0 & -2 & 0 & 1 & -1 & 0 & 1 & 0 \\
1 & 1 & 0 & 0 & -2 & -1 & 1 & 0 & 0 & 1 \\
\end{pmatrix}\]
is an extension of $a_2$ to a $10$-icube.
\end{example}

However, if $(a_1|a_2|a_3)$ were a $3$-icube, then the orthogonality of $a_1$ and $a_3$ would give $(a_3)_1=0$, and then the orhogonality of $a_2$ and $a_3$ together with the norm condition on $a_3$ would imply that
\[
\sum_{j=2}^{10} (a_3)_j = 0,\qquad \sum_{j=2}^{10} (a_3)_j^2 = 9,
\]
which viewed modulo $2$ would yield the contradiction that $\sum_{j=2}^{10}(a_3)_j$ is simultaneously even and odd.

We also have that the 10-icube $(1,1,1,1,\dots,1,1)^T$, $(3,-3,0,0,\dots,0,0)^T$, $(0,0,3,-3,\dots,0,0)^T$, $\dots$, $(0,0,0,0,\dots,3,-3)^T$ in $\ZZ^{18}$ cannot be extended to a 11-icube. Indeed any vector orthogonal to the last 9 has the form $(\alpha_1,\alpha_1,\alpha_2,\alpha_2,\dots \alpha_9,\alpha_9)^T$ and this reduces to find vectors orthogonal to $(1,1,\dots,1)\in\ZZ^9$.

In the same manner one can construct a 28-icube in $\ZZ^{36}$ which cannot be extended to a 29-icube: and
$(1,1,1,1,1,\dots,1,1,1,1)^T$,
$(3,3,-3,-3,0,\dots,0,0,0,0)^T$,
$(3,-3,3,-3,0,\dots,0,0,0,0)^T$,\break
$(3,-3,-3,3,0,\dots,0,0,0,0)^T$,
$\dots$,
$(0,0,0,0,0,\dots,3,3,-3,-3)^T$,
$(0,0,0,0,0,\dots,3,-3,3,-3)^T$,\break
$(0,0,0,0,0,\dots,3,-3,3,-3)^T$,
$(0,0,0,0,0,\dots,3,-3,-3,3)^T$.
The vectors orthogonal to the last 27 are in the form $(\alpha_1,\alpha_1,\alpha_1,\alpha_1,\alpha_2,\alpha_2,\dots,\alpha_9,\alpha_9,\alpha_9,\alpha_9)^T$. This provides a counterexample for Conjectures 3 and 4 in \cite{LacalleGatti}.

\subsection{$Q$-orthoregular bases in $R^2$}

For $K=\QQ$ or an imaginary quadratic field, we introduce the terminology that a number $\mu\in\QQ$ is an \emph{absolute square in $K$} if there is a $y\in K$ such that $|y|^2=\mu$. Note that if $\mu\in\ZZ$ is an absolute square in $K$, and the class number of $K$ is $1$, then one can choose $y$ to be in the ring of integers of $K$ (see Lemma~\ref{quotientisnorm}).

Let $K:=\mathbb Q$ or $\mathbb Q(i)$ and $R:=\mathcal O_K$ be the ring of integers in $K$. For $0<\mu\in\mathbb Q$ write
\begin{equation}\label{normfree}
\mu=\Delta\varepsilon,
\end{equation}
where $\Delta\in\mathbb Q$ is an absolute square
and $1$ is the single divisor of $\varepsilon\in\mathbb Z$ which is an absolute square.

For example if $K=\mathbb Q$ and $\mu\in\mathbb Z$ then $\varepsilon$ is the squarefree part of $\mu$ and if $K=\mathbb Q(i)$ and $\mu\in\mathbb Z[i]$ then $\varepsilon$ is the product of those prime divisors of $\mu$, which are in form $4k+3$ and have odd exponent in $\mu$.

\begin{definition}[integral $Q$-orthoregular bases in $R^n$] 
Let $Q$ be an hermitian form corresponding to the matrix $M=M^*\in R^{n\times n}$. 
For $1\leq k\leq n$, a matrix $(v_1|\dotsc|v_n)\in R^{n\times n}$ is called an integral $Q$-orthoregular basis (in $R^n$, of norm $\lambda > 0$),
if
\[
v_i^*M v_j = \begin{cases} \lambda, \qquad & \text{if $i=j$}, \\ 0, & \text{if $i\neq j$.} \end{cases}
\]
\end{definition}

If $Q$ is the standard inner product on $K^n$, then an integral $Q$-orthoregular basis is exactly an $n$-icube.

After fixing $a_1\in R^3$ we investigate the hermitian form $Q$ of discriminant $\lambda=|a_1|^2$, which arises by restricting the norm to the $R$-module
\begin{equation}\label{Lambdalattice}
\Lambda:=\{w\in R^3 \mid a_1^*w=0\}.
\end{equation}
Finding a 3-icube containing a fixed vector $a_1\in R^3$ then reduces to finding an integral $Q$-orthoregular basis of norm $\lambda=|a_1|^2$ in $\Lambda$. 
This problem leads to an explicit relation between certain integral bases related to integral hermitian forms over quadratic imaginary fields (or over $\mathbb Q$ if $R=\mathbb Z$) and factorizations of specific form in the quaternion order $S=\{r+s\sqrt{\varepsilon}j|r,s\in R\}\subset \mathbb H$ of Hamiltonian quaternions.

The connection of hermitian forms and quaternion algebras goes at least back to Latimer (\cite{Latimer35} and \cite{Latimer36}) and since then many others studied it (see for example \cite[Section~2]{Shimura} and \cite{SavinZhao}). However, for orthoregular bases we need factorizations of explicit elements rather than that of ideals, so in Section \ref{QuaternionSection} we prove a version of this correspondence which is more suitable for our purpose.

Assume $\mu$ is an absolute square
in $K$. Let $Q$ be a binary hermitian form corresponding to the matrix $M\in R^{2\times 2}$ of determinant 
$\Delta\in\mathbb Z$. In this case the above bases are exactly the $Q$-orthoregular bases. Fix  $\lambda:=\nu\Delta$ for some $\nu\in\mathbb Z$. Then the integral orthoregular bases of norm $\lambda$ have a rigid structure:

\begin{proposition}\label{perpprop}~
If $(a_1,a_2)\in R^2\times R^2$ is a $Q$-orthoregular basis of norm $\lambda=\nu\Delta$, then $a_2=(Ma_1)_\perp/\delta$ for some $\delta\in \frac1\nu R$, $|\delta|^2=\Delta$, where
\begin{equation}\label{perpdef}
\begin{pmatrix}x\\y \end{pmatrix}_\perp=\begin{pmatrix}-\overline y\\ \overline x\end{pmatrix}.
\end{equation}
\end{proposition}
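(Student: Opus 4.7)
The plan is threefold: locate $a_2$ up to a scalar via the orthogonality relation, fix $|\delta|^2$ via the norm condition, and then promote the resulting equality to the integral statement $\nu\delta\in R$.

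\emph{Locating $a_2$.} Since $M=M^*$, the equation $a_1^*Ma_2=0$ rewrites as $(Ma_1)^*a_2=0$. Because $\Delta=\det M\neq 0$ (forced by $\lambda=\nu\Delta>0$), the vector $Ma_1$ is nonzero, so its standard-inner-product orthogonal complement in $K^2$ is the one-dimensional line spanned by $(Ma_1)_\perp$. This immediately yields a unique $\delta\in K^*$ with $a_2=(Ma_1)_\perp/\delta$.

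\emph{Computing $|\delta|^2$.} I would verify two formal $2\times 2$ identities by direct calculation:
\[
M\,(Mw)_\perp \;=\; \Delta\,w_\perp \qquad\text{and}\qquad u_\perp^*\,w_\perp \;=\; w^*u
\]
for all $u,w\in K^2$ (the first valid for any hermitian $M$ with $\det M=\Delta$). Chaining them with $w=a_1$,
\[
(Ma_1)_\perp^*\,M\,(Ma_1)_\perp \;=\; \Delta\,(Ma_1)_\perp^*(a_1)_\perp \;=\; \Delta\,a_1^*Ma_1 \;=\; \Delta\lambda,
\]
and comparing with $\lambda=a_2^*Ma_2=|\delta|^{-2}(Ma_1)_\perp^*M(Ma_1)_\perp$ forces $|\delta|^2=\Delta$.

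\emph{Integrality of $\nu\delta$.} A similar direct expansion gives $\det(a_1\,|\,(Ma_1)_\perp)=a_1^*Ma_1=\lambda$, so
\[
\det(a_1\,|\,a_2)\;=\;\frac{1}{\delta}\,\det(a_1\,|\,(Ma_1)_\perp)\;=\;\frac{\lambda}{\delta}\;\in\;R,
\]
because $a_1,a_2\in R^2$. Conjugating and using that $R=\mathcal{O}_K$ is closed under complex conjugation gives $\lambda/\bar\delta\in R$; since $\bar\delta=\Delta/\delta$ and $\lambda=\nu\Delta$, this simplifies to $\nu\delta$, so $\nu\delta\in R$, i.e.\ $\delta\in\tfrac{1}{\nu}R$. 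The whole proof is really just these three observations plus two small $2\times 2$ identities; the only subtle point is the last step, where the conjugation symmetry of $R$ together with the relation $\delta\bar\delta=\Delta$ upgrades the weak divisibility $\delta\mid\lambda$ in $R$ to the sharper $\nu\delta\in R$.
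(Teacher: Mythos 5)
Your proof is correct; I checked the two $2\times 2$ identities $M(Mw)_\perp=\Delta\,w_\perp$ and $u_\perp^*w_\perp=w^*u$, as well as $\det\bigl(a_1\,\big|\,(Ma_1)_\perp\bigr)=a_1^*Ma_1$, and they all hold, so each of your three steps goes through (the only micro-point is that $Ma_1\neq 0$ follows most directly from $a_1^*Ma_1=\lambda\neq 0$ rather than from $\det M\neq 0$ alone). Your route differs from the paper's in an interesting way. For the existence of $\delta$ and the evaluation $|\delta|^2=\Delta$, the paper does essentially the same elementary computation (Proposition \ref{perpprop2}, via the identity \eqref{Qsquare} $\alpha Q((x,y)^T)=|\alpha x+\beta y|^2+\mu|y|^2$), just packaged differently from your pair of identities. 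The real divergence is in the integrality statement $\nu\delta\in R$: the paper deduces it from Proposition \ref{integralfactorizationthm}(1), i.e.\ by transporting the basis through the map $F$ of Proposition \ref{factorizationthm} into the order $S$ and reading off that $uv=\nu(\beta+\delta\sqrt{\varepsilon}j)$ lies in $S$, whereas you get it directly from $\det(a_1\,|\,a_2)=\lambda/\delta\in R$, conjugation-stability of $R$, and $\delta\overline\delta=\Delta$, which turns $\lambda/\overline\delta$ into $\nu\delta$. Your determinant argument is more elementary and self-contained, and it does not need the quaternion/Gaussian factorization machinery at all (in fact it would also reprove part (1) of Proposition \ref{integralfactorizationthm} in the orthobalanced setting); the paper's route costs nothing extra there because that machinery is developed anyway for the existence results in Proposition \ref{Orthoregbasisthm}, but as a standalone proof of Proposition \ref{perpprop} yours is leaner.
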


This motivates the following
\begin{definition}
The $Q$-orthoregular basis $(a_1,a_2)\in R^2\times R^2$ is of type $\delta$ if $a_2=(Ma_1)_\perp/\delta$.
\end{definition}

The corresponding quadratic order is

\[S:=\begin{cases}\text{the ring of Gaussian integers}, & \text{ if } R=\mathbb Z\\ \text{the ring of integral quaternions}, & \text{ if }R=\mathbb Z[i].\end{cases}\]

A careful understanding of the arithmetic of these rings yields the following

\begin{proposition}\label{Orthoregbasisthm} 
Let $Q$ and $\lambda=\nu\Delta$ as above and $\delta\in\frac{1}{\nu} R$ of norm $\Delta$. Then we have
\begin{enumerate}
\item If $R=\mathbb Z$, then there exists an integral $Q$-orthoregular basis of type $\delta$ and norm $\lambda$ if and only if $\alpha\nu$ is a sum of two squares.
\item If $R=\mathbb Z[i]$, then there always exists an integral $Q$-orthoregular basis of type $\delta$ and norm $\lambda$.
\item In both cases any vector $a_1\in R^2$ with $Q(a_1)=\lambda$
can be extended to an integral $Q$-orthoregular basis.
\end{enumerate}
\end{proposition}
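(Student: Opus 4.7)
The plan is to translate all three statements into factorization problems inside the quaternion order $S = \{r + s\sqrt{\varepsilon}j : r,s\in R\}\subset \mathbb{H}$, using the dictionary between binary hermitian forms and quaternion algebras that the paper develops in Section~\ref{QuaternionSection}. By Proposition~\ref{perpprop}, a $Q$-orthoregular basis of type $\delta$ and norm $\lambda$ is determined by its first vector $a_1\in R^2$, together with the requirement that $(Ma_1)_\perp/\delta$ again lies in $R^2$. Encoding $a_1$ (and the form $Q$) as a single quaternion in $S$ of reduced norm equal to $\nu\lambda$, the integrality of $a_2$ translates to a one-sided divisibility condition of the form ``a quaternion of reduced norm $\nu\lambda$ is divisible on one side by a quaternion of reduced norm $\nu^2\Delta$'' inside $S$. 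The three parts of the proposition then correspond to three progressively more flexible factorization problems.

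For (1), with $R=\ZZ$, the order $S$ becomes (after rescaling by $\nu$) an order in $\QQ(\sqrt{-\varepsilon})$, which is commutative and of class number one. Its divisibility theory is governed by the classical sum-of-two-squares theorem, and working out precisely when the required factorization exists yields the stated arithmetic condition on $\nu$. For (2), with $R=\ZZ[i]$, $S$ is a non-commutative order of Lipschitz type inside the Hamilton quaternions. Here the crucial input is that every positive rational integer is a reduced norm, and more importantly every element admits factorizations with left factors of any prescribed norm dividing its own, so the factorization one needs exists unconditionally.

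For (3) one is given $a_1$ and must \emph{produce} some $\delta$ of norm $\Delta$ such that $(Ma_1)_\perp/\delta\in R^2$; in quaternionic language this amounts to exhibiting \emph{some} left (or right) divisor of reduced norm $\nu^2\Delta$ of the quaternion attached to $a_1$. This will follow from the class-number-one property of the orders in play: every quaternion of reduced norm $N$ in $S$ admits a left divisor of each reduced norm $N'\mid N$. The main obstacle I anticipate is the bookkeeping with denominators, since $\delta$ must lie in the fractional lattice $\frac{1}{\nu}R + \frac{1}{\nu}R\sqrt{\varepsilon}j$ rather than just in $S$; a careful calibration of the rescaling between the quaternion and the vector $a_1$ is needed to verify that the divisor produced inside $S$ indeed corresponds to a $\delta$ in the prescribed fractional lattice, and to make sure that the same $\delta$ works across both parts (1) and (2) rather than forcing a different rescaling in each case.
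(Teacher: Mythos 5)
Your plan for parts (1) and (2) is in spirit the paper's own route: one passes through the dictionary of Proposition~\ref{factorizationthm} and Proposition~\ref{integralfactorizationthm}(2), under which an integral type-$\delta$ basis of norm $\lambda$ corresponds to a factorization $uv=\nu(\beta+\delta\sqrt{\varepsilon}j)$ in $S$ with $|u|^2=\alpha\nu$ and $|v|^2=\gamma\nu$. Note, however, that your norm bookkeeping is off: the element to be factored has norm $\nu^2\alpha\gamma$ and the prescribed left factor has norm $\alpha\nu$; your formulation ``a quaternion of reduced norm $\nu\lambda$ divisible on one side by one of reduced norm $\nu^2\Delta$'' is empty as stated, since $\nu\lambda=\nu^2\Delta$. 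More substantially, the two facts you invoke as ``crucial input'' are exactly the content of Lemma~\ref{enoughdivlemma} (the ``enough divisors'' property), and they are not off-the-shelf consequences of class number one: for $S\cong\ZZ[j]$ the blanket claim ``every element of norm $N$ has a divisor of every norm $N'\mid N$'' is false (take $t=3$, $N'=3$) and must be restricted to $N'$ that are sums of two squares (Lemma~\ref{quotientisnorm}), while for the Lipschitz order $\{r+sj:r,s\in\ZZ[i]\}$ the one-sided divisor statement requires passing to the Hurwitz order and then correcting by a Hurwitz unit to land back in $S$. So for (1)--(2) your plan matches the paper but leaves its technical heart asserted rather than proved.

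The genuine gap is in part (3). There $\delta$ is not given: you must produce a \emph{scalar} $\delta\in K$ with $|\delta|^2=\Delta$ such that $a_2=(Ma_1)_\perp/\delta\in R^2$, equivalently an element $\delta_0=\nu\delta\in\mathcal{O}_K$ of norm $\nu^2\Delta$ dividing \emph{both} coordinates $w_0,z_0$ of $\nu(Ma_1)_\perp$ in the commutative ring $R$. Your reduction to ``some left divisor of reduced norm $\nu^2\Delta$ of the quaternion attached to $a_1$'' does not deliver this: a general left divisor $u=r+s\sqrt{\varepsilon}j\in S$ of, say, $z_0+w_0\sqrt{\varepsilon}j$ need not lie in $R$ (need not have $s=0$), and only divisors lying in $R$ give a legitimate type, since $a_2$ being a $K$-multiple of $(Ma_1)_\perp$ forces $\delta$ to be a scalar. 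Accordingly, the paper's proof of Proposition~\ref{integralfactorizationthm}(3) does not use the order $S$ at all: it works inside $\mathcal{O}_K$ with unique factorization (class number one of $K$, not of the quaternion order) and exploits the three identities $|z_0|^2=\nu^2\Delta(\alpha\nu-\varepsilon|y|^2)$, $|w_0|^2=\nu^2\Delta(\gamma\nu-\varepsilon|x|^2)$, $-\overline{z_0}w_0=\nu^2\Delta(\beta\nu+\varepsilon\overline{y}x)$; the cross term is what allows the valuations at split primes to be distributed so that a common divisor of norm exactly $\nu^2\Delta$ exists. The denominator issue you flagged is then automatic ($\delta:=\delta_0/\nu$); the missing idea is the scalar/common-divisor constraint, and without it your argument for (3) does not go through.
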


This directly proves Theorems \ref{3Z} and \ref{3Zi}. For Theorems \ref{4Z} and \ref{4Zi} we have to examine the hermitian form which arises by restricting a norm to certain $R$-modules of rank 2.

\subsection{An application to automorphic forms}

We also present an application from the analytic theory of automorphic forms. Assume $G$ is a reductive group defined over a number field $K$ such that $G(K_{\infty})$ is not compact for at least one archimedean place $K_{\infty}$ of $K$. Consider then the quotient space
\[
X:=G(\QQ)\backslash G(\mathbb{A}),
\]
where $\mathbb{A}$ stands for the adele ring of $K$, and assume that $X$ has finite volume with respect to the $G(\mathbb{A})$-invariant measure (regarding the theory, this is quite natural to do). The sup-norm problem of automorphic forms asks for pointwise bounds on smooth, $L^2$-normalized functions $\phi:X\to\mathbb{C}$ which generate irredubible $G(\mathbb{A})$-representations under the right action of $G(\mathbb{A})$. We further assume that $\phi$ is spherical, i.e.~it is fixed under the action of a maximal compact subgroup of $G(\mathbb{A})$. The baseline bound is due to Sarnak \cite{Sarnak2004L}, and it says the following. For any compact set $\Omega\subseteq X$, we have that\footnote{Here and below, we follow the notation of Vinogradov: $A\ll_D B$ means that $|A|\leq CB$ for some constant $C$ depending only on $D$.}
\[
\|\phi|_{\Omega}\|_{\infty} \ll_{\Omega} \lambda_{\phi}^{\frac{\dim-\mathrm{rk}}{4}}
\]
holds for all $\phi$, where $\lambda_{\phi}$ is the Laplace eigenvalue of $\phi$, while $\dim$ and $\mathrm{rk}$ stand for the dimension and the rank, respectively, of the real symmetric space corresponding to $X$. This general bound relies only on the analytic properties of $\phi$, in particular, it holds for eigenfunctions on more general symmetric spaces, and the exponent $\frac{\dim-\mathrm{rk}}{4}$ is known to be sharp in the broad generality. The arithmetic sup-norm problem asks if it can be improved to $\frac{\dim-\mathrm{rk}}{4}-\delta(G)$ with some $\delta(G)>0$ depending only on $G$ in the arithmetic situation presented here. For $G=\mathrm{PGL}_n$ (a prominent class of groups in the theory), the answer is positive, see \cite{BlomerMagaSelecta} for $K=\QQ$ and \cite{MagaZabradi} for $K=\QQ(i)$. More general classes of groups are treated in the unpublished preprint of Marshall \cite{Marshall}, in particular, all real quasi-split simple groups except for those being isogenous to $\mathrm{SU}_{n,n-1}$, $n \geq 2$. This exceptional class seems to be untreatable with the current technology, and our investigations on icubes give a quantitative explanation to this obstruction, at least, for some small values of $n$. We describe this phenomenon briefly, noting that in fact, the sup-norm problem of automorphic forms on $\mathrm{SU}_{n,n-1}$ had been the original motivation to our research on icubes.

The method towards all known results in the sup-norm problem (at least, in high rank) starts with applying (some version of) the amplification method as pioneered by Iwaniec and Sarnak \cite{IwaniecSarnak} and estimating the so-called spherical transform. A careful analysis leads then to the heart of the matter: a delicate matrix-counting problem whose exact form depends a lot on what $G$ and $K$ exactly are.

For the group $G=\mathrm{SU}_{n,n-1}$ over $K=\QQ$, this matrix-counting problem is the following (after some simplifications), see e.g. \cite{BlomerMagaIMRN} for more details, which describes the setup of the analogous matrix-counting problem for $\mathrm{PGL}_n$. Assume that $\ell_1,\ell_2$ are Gaussian primes lying above split rational primes such that $|\ell_1|^2,|\ell_2|^2$ are about size $L$, a large parameter, and consider
\[
S_n(\ell_1,\ell_2):=\left\{\gamma\in \mathrm{GL}_n(\ZZ[i])\ \diag(1,\ell_1\ell_2,\dotsc,\ell_1\ell_2,|\ell_1\ell_2|^2)\ \mathrm{GL}_n(\ZZ[i]) \mid \gamma^* \gamma= |\ell_1\ell_2|^2 \cdot I \right\},
\]
where $I$ stands for the identity matrix.
Note that the set $S_n(\ell_1,\ell_2)$ is exactly the set of $n$-icubes with Smith normal form \[\diag(1,\ell_1\ell_2,\dots,\ell_1\ell_2,|\ell_1\ell_2|^2).\] To solve the sup-norm problem, it would be sufficient to prove that
\[
\# S_n(\ell_1,\ell_2) \ll_n L^{2(n-1)-\eta}
\]
holds for some $\eta>0$ (which might depend on $n$ but nothing else).

Unfortunately, this is not the case even for $n=2$. One can easily see that
\[
\# S_2(\ell_1,\ell_2) = \{(a,b,c,d) \in \ZZ^4 \mid a^2+b^2+c^2+d^2=|\ell_1\ell_2|^2,\ \gcd(a^2+b^2,|\ell_1\ell_2|^2)=1 \},
\]
since for any such $4$-tuple on the right-hand side, we can take
\begin{equation}\label{eq:special-hecke-returns}
\gamma:=\begin{pmatrix}
a+bi & -c+di \\ c+di & a-bi
\end{pmatrix}\in S_2(\ell_1,\ell_2).
\end{equation}
We know that a large odd number of size $L^2$ is represented as the sum of four squares in more than $L^2$ ways and asymptotically $100\%$ of the representations satisfy the coprimality condition $\gcd(a^2+b^2,|\ell_1\ell_2|^2)=1$. To sum up, $\#S_2(\ell_1,\ell_2)\gg L^2$.

For $n\geq 3$, we follow the same procedure: we make the first column $a_1$ of $\gamma$ with the only conditions that its entries are coprime and that $|a_1|^2=|\ell_1\ell_2|^2$, and then we complete the matrix. One can easily see that there are $\gg_n L^{2(n-1)}$ choices for $a_1$. For $n=3$, Proposition~\ref{3Zip} below shows that $a_1$ can be completed to a matrix considered in $S_3(\ell_1,\ell_2)$. For $n=4$, we first complete $a_1$ to $(a_1|a_2)$ as it is given in \eqref{{eq:1-icube-to-2-icube-in-Z[i]4}}. It is straight-forward to check that for almost all choices of $a_1$, the condition imposed in Proposition~\ref{4Zip} below holds, hence it can be completed to a matrix counted in $S_4(\ell_1,\ell_2)$. As a result,
\[
\#S_n(\ell_1,\ell_2)\gg_n L^{2(n-1)},\qquad n=2,3,4.
\]

That is, for $n=2,3,4$, there is a high number of matrices (a high number of so-called \emph{Hecke returns}), which shows that the current technology cannot work for the groups $\mathrm{SU}_{2,1}$, $\mathrm{SU}_{3,2}$, $\mathrm{SU}_{4,3}$. For $n=2$, this was clear and classical from the four squares theorem, but the special shape of matrices (recall \eqref{eq:special-hecke-returns}) suggested that the high number of Hecke returns might be relying on the existence of a low-dimensional special algebra, and the counting problem can be sufficiently solved for larger values of $n$. Based on our findings, we think this is quite unlikely, and additional ideas and/or a profound, new approach are needed to solve the sup-norm problem for any $\mathrm{SU}_{n,n-1}$.

We introduce the following notations. First of all, for any Gaussian integral matrix $A$, we denote by $d_k(A)$ its \emph{$k$th determinantal divisor}, the greatest common divisor of its $k\times k$ subdeterminants, note that it is well-defined only up to unit multiples. For an $m\times n$ matrix $A$, we write $\SNF(A)=\diag(\alpha_1,\dotsc,\alpha_{\min(m,n)})$ for its \emph{Smith normal form}, if
\[
A\in \mathrm{GL}_m(\ZZ[i])\ \underbrace{\diag(\alpha_1,\dotsc,\alpha_{\min(m,n)})}_{\in\ZZ[i]^{m\times n}}\ \mathrm{GL}_n(\ZZ[i]), \qquad \alpha_1\mid \dotsc \mid \alpha_{\min(m,n)},\quad \Re\alpha_j> 0, \Im \alpha_j\geq 0
\]
Record also the equation
\[
\prod_{j=1}^k \alpha_j = d_k(A), \qquad k\leq \min(m,n).
\]
which is meant by an appropriate choice of the unit multiples in $d_k$ (or on the level of generated principal ideals). A matrix $A$ is called \emph{primitive}, if $d_1(A)=1$, i.e.~its entries have no nontrivial common divisor. All these notions and terminologies might be applied to vectors.

\begin{lemma}\label{SNFlemma}
Let $A=(a_1|a_2|\dots|a_n)\in R^{n\times n}$ be an icube of norm $\lambda$ and with Smith normal form $\SNF(A)=\diag(\alpha_1,\alpha_2,\dots,\alpha_n)$. Then for all $1\leq j\leq n$ we have $\overline\alpha_j\alpha_{n+1-j}=\lambda$.
\end{lemma}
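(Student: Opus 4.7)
The icube condition $A^*A = \lambda I$ (together with $A$ being square) makes $A$ invertible over the fraction field, with $\lambda A^{-1} = A^*$. Multiplying by $\det A$ and using the adjugate formula $A^{-1} = (\det A)^{-1} \mathrm{adj}(A)$, I get the matrix identity
\[
(\det A) \cdot A^* \;=\; \lambda \cdot \mathrm{adj}(A)
\]
in $R^{n\times n}$. My plan is to compute the Smith form of both sides and match elementary divisors.

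From a Smith decomposition $A = UDV$ with $U,V \in \GL_n(R)$ and $D = \diag(\alpha_1,\dots,\alpha_n)$, two auxiliary computations suffice. First, $A^* = V^* \overline{D}\, U^*$, and since $U^*, V^* \in \GL_n(R)$ (integral with unit determinant), the $k$th elementary divisor of $A^*$ is $\overline{\alpha_k}$ up to a unit. Second, $\mathrm{adj}(A) = (\det U)(\det V)\, V^{-1} \mathrm{adj}(D)\, U^{-1}$, and since $\mathrm{adj}(D) = \diag(\det A/\alpha_1, \dots, \det A/\alpha_n)$, reordering to restore divisibility yields that the $k$th elementary divisor of $\mathrm{adj}(A)$ is $\det A / \alpha_{n+1-k}$, up to a unit.

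Now matching: multiplication of a matrix by a scalar $c$ multiplies each of its elementary divisors by $c$. So the identity above gives, at the level of elementary divisors modulo $R^\times$,
\[
(\det A)\,\overline{\alpha_k} \;\equiv\; \lambda \cdot \frac{\det A}{\alpha_{n+1-k}} \pmod{R^\times},
\]
and cancellation of $\det A$ (valid as ideals are cancellable in a domain) yields $\overline{\alpha_k}\,\alpha_{n+1-k} = u_k\,\lambda$ for some unit $u_k \in R^\times$.

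The main obstacle — the only real subtlety — is pinning down $u_k = 1$ rather than a nontrivial unit. For $R = \ZZ$ this is immediate, since the normalization $\alpha_j > 0$ forces the left-hand side to be positive. For $R = \ZZ[i]$, the normalization $\Re \alpha_j > 0,\ \Im \alpha_j \geq 0$ places each $\alpha_j$ in the closed first quadrant, i.e.\ $\arg \alpha_j \in [0,\pi/2)$. Hence $\arg(\overline{\alpha_k}\,\alpha_{n+1-k}) = -\arg\alpha_k + \arg\alpha_{n+1-k} \in (-\pi/2,\pi/2)$. On the other hand $u_k\lambda$ has argument in $\{0, \pi/2, \pi, -\pi/2\}$ since $\lambda>0$ is real and $u_k \in \{\pm 1,\pm i\}$. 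The only common value is $0$, forcing $u_k = 1$ and $\overline{\alpha_k}\,\alpha_{n+1-k} = \lambda$ on the nose.
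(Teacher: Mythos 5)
Your proof is correct and takes essentially the same route as the paper: both compute the Smith normal form of $A^*=\lambda A^{-1}$ in two ways (as $\overline{\alpha}_k$ and, after reversing the order, as $\lambda/\alpha_{n+1-k}$), invoke uniqueness of the SNF, and then pin down the unit using the first-quadrant normalization $\Re\alpha_j>0$, $\Im\alpha_j\geq 0$. The only cosmetic difference is that you clear denominators via the adjugate identity $(\det A)\,A^*=\lambda\,\mathrm{adj}(A)$ and then cancel $\det A$, whereas the paper works with $\lambda\,\diag(\alpha_1^{-1},\dotsc,\alpha_n^{-1})$ directly, conjugating by the antidiagonal permutation matrix.
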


\begin{proposition}\label{3Zip}
If $a_1\in R^3$ is primitive such that $|a_1|^2=\Delta$ is a norm in $R$, then for any $\alpha_2\in R$, $|\alpha_2|^2=\Delta$ there exists an extension to an icube $A(\alpha_2)=(a_1|a_2|a_3)$ for which $\SNF(A(\alpha_2))=\diag(1,\alpha_2,\Delta)$.
\end{proposition}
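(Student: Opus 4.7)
My plan is to reduce the problem to finding a $2$-dimensional integral $Q$-orthoregular basis of prescribed type on the orthogonal complement of $a_1$, and then invoke Proposition~\ref{Orthoregbasisthm}.

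First, by Lemma~\ref{SNFlemma}, any extension $A = (a_1|a_2|a_3)$ of a primitive $a_1$ automatically satisfies $\alpha_1 = 1$, $\alpha_3 = \Delta$ and $|\alpha_2|^2 = \Delta$, so the SNF must take the claimed shape, and only the realizability of each admissible $\alpha_2$ remains. Set $\Lambda := \{w \in R^3 : a_1^* w = 0\}$, which is a free $R$-module of rank $2$ since $R$ is a PID. Fix any $R$-basis $B = (b_1|b_2)$ of $\Lambda$, put $M := B^*B$, and let $Q$ be the associated binary hermitian form. Using $a_1^* b_j = 0$, the Gram matrix of $(a_1|B)$ is block-diagonal with determinant $\Delta \cdot \det M$; on the other hand, primitivity of $a_1$ yields $R^3/\Lambda \cong R$ via $w \mapsto a_1^* w$ (sending $a_1$ to $\Delta$), so $Ra_1 \oplus \Lambda$ has index $\Delta$ in $R^3$, whence $\det(a_1|B) = u\Delta$ for some unit $u$. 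Comparing the two expressions yields $\det M = \Delta$.

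Writing $a_2 = Bc_2$ and $a_3 = Bc_3$, the icube condition is equivalent to $C := (c_2|c_3) \in R^{2 \times 2}$ being an integral $Q$-orthoregular basis of norm $\lambda = \Delta$; so here $\nu = 1$. Since $1$ is trivially a sum of two squares, Proposition~\ref{Orthoregbasisthm} provides such a basis of any prescribed type $\delta' \in R$ with $|\delta'|^2 = \Delta$, and I choose $\delta' := \overline{\alpha_2}$.

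To finish, I verify that the resulting icube has the correct middle invariant factor. Because $A^*A = \Delta I$, the adjugate satisfies $\mathrm{adj}(A) = (\det A/\Delta)\,A^*$; since $a_1$'s entries are already coprime, $d_2(A) = \det A/\Delta$ up to units. Expanding $\det A = \det(a_1|B) \cdot \det C = u\Delta \cdot \det C$, and computing from $c_3 = (Mc_2)_\perp/\delta'$ together with $c_2^*Mc_2 = \Delta$ the identity $\det C = \Delta/\delta' = \alpha_2$, I conclude that $d_2(A) = \alpha_2$ up to units, as required.

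The main obstacle I anticipate is the bookkeeping of units and conjugations in the last step: one must check carefully which conjugate and which unit appears when translating between the $2$-dimensional type $\delta'$ and the $3$-dimensional middle invariant $\alpha_2$, so that exactly the prescribed $\alpha_2$ — rather than some unit multiple of its conjugate — is produced at the end. The technical identity $\det C = \Delta/\delta'$ and the choice $\delta' = \overline{\alpha_2}$ (as opposed to $\delta' = \alpha_2$) are dictated precisely by this bookkeeping.
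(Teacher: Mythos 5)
Your route is essentially the paper's: pass to $\Lambda=\{w\in R^3: a_1^*w=0\}$, show the restricted form $Q$ has discriminant $\Delta$ (your index computation reproves Proposition~\ref{discQlemma}(2) in the case $k=1$, $d_1(a_1)=1$), obtain an integral $Q$-orthoregular basis of type $\overline{\alpha_2}$ from Proposition~\ref{Orthoregbasisthm}, and identify the middle invariant factor from the determinant. Your endgame via $\mathrm{adj}(A)=(\det A/\Delta)A^*$ and $d_1(A)=1$, giving $d_2(A)=\det A/\Delta$, is a clean substitute for the paper's normalization $a_1=b_2\times b_3$ through Corollary~\ref{crossproductcor}; both yield $\SNF(A)=\diag(1,\alpha_2,\Delta)$ up to the unit normalization inherent in the Smith form, and your computation $\det C=Q(c_2)/\delta'=\Delta/\overline{\alpha_2}=\alpha_2$ is the same identity the paper uses.

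The one genuine slip is the sentence ``since $1$ is trivially a sum of two squares, Proposition~\ref{Orthoregbasisthm} provides such a basis of any prescribed type''. The hypothesis of Proposition~\ref{Orthoregbasisthm}(1) (the case $R=\ZZ$) is that $\alpha\nu$ is a sum of two squares, where $\alpha$ is the top-left entry of the Gram matrix of your chosen basis of $\Lambda$ --- not that $\nu$ is. With $\nu=1$ you still need $\alpha=|b_1|^2$ to be a sum of two squares, and for an arbitrary basis of $\Lambda$ this is not automatic. For $R=\ZZ[i]$ your step survives because part (2) of that proposition is unconditional (your stated reason is simply irrelevant there), so the case needed for the sup-norm application is fine; but the statement also allows $R=\ZZ$, and there you must add the argument from the proof of Theorem~\ref{3Z}: choose the first basis vector of $\Lambda$ to have a vanishing coordinate, so that its norm is a sum of two squares, and note via Lemma~\ref{squaredetlem} (whose conditions (1) and (3) are equivalent, and (3) is basis-independent) that the leading Gram entry of any basis is then a sum of two squares, so Proposition~\ref{Orthoregbasisthm}(1) applies. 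With that patch the proof is complete.
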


\begin{remark}
If $d_1(a_1)=1$ and $Q$ is the hermitian form which is obtained by restricting the norm to the lattice $\Lambda$ as in (\ref{Lambdalattice}), then an orthoregular basis of norm $\Delta$ and type $\delta$ yields the icube $A(\overline\delta)$.
\end{remark}

\begin{proposition}\label{4Zip}
If $A_0=(a_1|a_2)\in\mathbb Z[i]^{4\times 2}$ is a 2-icube of norm $\lambda$ such that $a_1$ is primitive and $(d_2(A_0),\overline{d_2(A_0)})=1$ (that is all prime divisors of $d_2(A)$ have prime norm $p=4k+1\in\mathbb Z$ and are pairwise non-conjugate), then for all $\alpha_2|d_2(A)$ there is an extension $A(\alpha_2)=(a_1|a_2|a_3|a_4)$ to a $4$-icube such that $\SNF(A(\alpha_2))=\diag(1,\alpha_2,\lambda/\overline{\alpha_2},\lambda)$.
\end{proposition}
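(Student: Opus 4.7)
The plan is to reduce to finding a $2$-icube of norm $\lambda$ in the rank-$2$ $\mathbb{Z}[i]$-module
\[
\Lambda = \{w \in \mathbb{Z}[i]^4 \mid a_1^* w = a_2^* w = 0\},
\]
equipped with the restriction $Q$ of the standard hermitian form, and then to use orthoregular-basis theory to parameterize such pairs by a type $\delta$ whose choice controls the Smith normal form of the resulting $4$-icube. Extensions $A = (a_1|a_2|a_3|a_4)$ of $A_0$ correspond bijectively to pairs $(a_3, a_4) \in \Lambda^2$ with $a_3^* a_3 = a_4^* a_4 = \lambda$ and $a_3^* a_4 = 0$.

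First, I would compute the discriminant of $Q$. Using the Smith normal form $A_0 = U \cdot \diag(1, d_2(A_0)) \cdot V$, with $U \in \GL_4(\mathbb{Z}[i])$ and $V \in \GL_2(\mathbb{Z}[i])$ (where $d_1(A_0) = 1$ by the primitivity of $a_1$), and a Schur-complement argument applied to the block decomposition of $U^* U$ (whose determinant is $1$), the discriminant of $Q$ works out to $\Delta := \lambda^2/|d_2(A_0)|^2 \in \mathbb{Z}_{>0}$. The $Q$-orthoregular pairs of norm $\lambda$ in $\Lambda$ are then parameterized, in the spirit of Proposition~\ref{perpprop} and Proposition~\ref{Orthoregbasisthm}(2), by a type $\delta$ with $|\delta|^2 = \Delta$, with every admissible type realized by some pair.

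The central step is to establish a local dictionary $(\alpha_2) = (\overline{\delta}, d_2(A_0))$ as ideals in $\mathbb{Z}[i]$, relating the type $\delta$ of the pair $(a_3,a_4)$ to the second Smith invariant of $A(\alpha_2) = (a_1|a_2|a_3|a_4)$; this is the $4$-dimensional analogue of the remark following Proposition~\ref{3Zip}. I would prove it prime-by-prime. The coprimality hypothesis $(d_2(A_0), \overline{d_2(A_0)}) = 1$ is crucial here: every prime $\pi \mid d_2(A_0)$ is split, of norm $p \equiv 1 \pmod 4$, and satisfies $\overline{\pi} \nmid d_2(A_0)$, so the contributions of distinct split-prime pairs $\{\pi, \overline{\pi}\}$ decouple. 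At each such $\pi$ the $\pi$-adic valuation of $\alpha_2$ can then be independently prescribed by choosing how much of $\pi$ versus $\overline{\pi}$ appears in $\delta$. Consequently, every divisor $\alpha_2 \mid d_2(A_0)$ arises from some admissible type $\delta$.

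Given the dictionary, the proof concludes as follows. Fix $\alpha_2 \mid d_2(A_0)$; choose a type $\delta$ with $(\overline{\delta}, d_2(A_0)) = (\alpha_2)$; apply Proposition~\ref{Orthoregbasisthm}(2) to produce the $Q$-orthoregular pair $(a_3, a_4) \in \Lambda^2$ of type $\delta$; and set $A(\alpha_2) = (a_1|a_2|a_3|a_4)$. Lemma~\ref{SNFlemma} then determines the full Smith normal form: $\alpha_1 = 1$ by primitivity, $\alpha_4 = \lambda$ from $\overline{\alpha_1} \alpha_4 = \lambda$, $\alpha_2$ by the dictionary, and $\alpha_3 = \lambda/\overline{\alpha_2}$ from $\overline{\alpha_2} \alpha_3 = \lambda$. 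The main obstacle will be the dictionary itself: because $\Delta$ need not divide $\lambda$ (e.g.\ if $\lambda$ has prime divisors outside of those in $d_2(A_0)$), Proposition~\ref{Orthoregbasisthm}(2) is not directly applicable to all of $\Lambda$, and one must either pass to a suitable sub- or super-lattice of $\Lambda$ or adapt the quaternion-factorization argument underlying Theorem~\ref{4Zi}. The coprimality hypothesis is precisely what makes the local bookkeeping manageable.
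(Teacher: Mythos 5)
Your overall strategy (work in $\Lambda$, produce a $Q$-orthoregular pair of a suitable type, then read off the Smith normal form) is the paper's strategy, but as written your proposal leaves both of its load-bearing steps unproven. First, the existence step: you correctly observe that Proposition~\ref{Orthoregbasisthm}(2) does not apply to $Q$ on $\Lambda$ with norm $\lambda$, since $\nu=\lambda/\Delta=|d_2(A_0)|^2/\lambda$ need not be an integer, but you only gesture at a fix (``pass to a sub- or super-lattice or adapt the quaternion argument''). The paper's resolution is concrete: replace $Q$ by the rescaled form $Q'=\frac{|d_2(A_0)|^2}{\lambda}Q$, which is \emph{integral} by Proposition~\ref{d2yx} (itself proved via the cross-product Lemmas~\ref{generatelemma} and~\ref{defLemma}) and has discriminant $|d_2(A_0)|^2$; then Proposition~\ref{Orthoregbasisthm}(2) applies with $\nu=1$ and the explicit type $\delta=d_2(A_0)\overline{\alpha_2}/\alpha_2\in\ZZ[i]$, $|\delta|^2=|d_2(A_0)|^2$. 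Note also that your type parameter has the wrong normalization ($|\delta|^2=\lambda^2/|d_2(A_0)|^2$, which need not even lie in $\ZZ[i]$), and that the very meaningfulness of the target $\SNF$ $\diag(1,\alpha_2,\lambda/\overline{\alpha_2},\lambda)$ requires $d_2(A_0)\mid\lambda$, which the paper supplies via Lemma~\ref{d2divlambda} (using primitivity of $a_1$ and Corollary~\ref{crossproductcor}); you do not address this.

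Second, your ``local dictionary'' $(\alpha_2)=(\overline{\delta},d_2(A_0))$ is exactly the crux, and it is asserted rather than proved: saying that the valuations ``decouple'' at split primes does not exhibit any mechanism by which the type of the pair $(a_3,a_4)$ controls $d_2(A(\alpha_2))$. The paper's mechanism is a determinant computation: by \eqref{detBuptounit}, $\det(A(\alpha_2))=\omega\lambda^2\overline{\delta}/\overline{d_2(A_0)}=\omega\lambda^2\alpha_2/\overline{\alpha_2}$ for a unit $\omega$; Lemma~\ref{SNFlemma} forces $\SNF(A(\alpha_2))=\diag(1,\alpha,\lambda/\overline{\alpha},\lambda)$ with $\alpha\mid d_2(A_0)$, so comparing determinants gives $\alpha_2/\overline{\alpha_2}=\omega'\alpha/\overline{\alpha}$; and then one pins down $\alpha=\alpha_2$ by comparing arguments of Gaussian integers, using precisely that $1+i\nmid d_2(A_0)$ and that, under the hypothesis $(d_2(A_0),\overline{d_2(A_0)})=1$, divisors of $d_2(A_0)$ are determined by their argument. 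Without this (or an equivalent) argument, the step from ``some orthoregular pair of type $\delta$ exists'' to ``the resulting icube has second invariant factor $\alpha_2$'' is a genuine gap, so the proposal as it stands is an outline rather than a proof.
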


\subsection*{Acknowledgments} We are grateful to Péter E. Frenkel, Gergely Harcos, Emil W. Kiss, Péter Kutas and John Voight for helpful discussions. Some of the ideas and even the name ''orthoregular'' originated from the unpublished paper \cite{KobayashiNakayama} of Masanori Kobayashi and Chikara Nakayama.

The research towards this paper was supported by the MTA–RI Lendület ``Momentum'' Analytic Number Theory and Representation Theory Research Group (M.\ E., P.\ M., G.\ Z.), by the NKFIH (National Research, Development and Innovation Office) Grants FK-135218 (M.\ E., P.\ M.) and K-135885 (M.\ E., G.\ Z.), and by the János Bolyai Scholarship of the Hungarian Academy of Sciences (G.\ Z.).

\section{Extension of vectors to \texorpdfstring{$n$}{n}-icubes }

We prove Propositions~\ref{2kp1}~and~\ref{2kp1i} in the following more general form. Let $K\leq\mathbb{C}$ be a number field stable under complex conjugation, i.e.~for  $\alpha\in K$ we have $\overline{\alpha}\in K$. Hence $K_0:=K\cap\mathbb R$ is a subfield of index at most $2$. Let us denote the ring of integers in $K$ by $\mathcal O_K$ and in $K_0$ by $\mathcal O_{K_0}:=K_0\cap \mathcal{O}_K$. Call $A\in \mathcal{O}_K^{n\times n}$ an icube of norm $\lambda$ if $A^*A=\lambda I$.

We first prove a preparatory lemma.
\begin{lemma}\label{quotientisnorm}
Assume $K$ has class number $1$. If $a,b\in\mathcal{O}_{K_0}$ satisfy that $a\mid b$ and that $a=\alpha\overline\alpha$ and $b=\beta\overline\beta$ for some $\alpha,\beta\in \mathcal{O}_K$, then there exists an $\alpha_1\in \mathcal{O}_K$ dividing $\beta$ such that $\alpha\overline{\alpha}=\alpha_1\overline{\alpha_1}$. In particular, we have $b/a=\gamma\overline\gamma$ with $\gamma=\beta/\alpha_1\in \mathcal{O}_K$.
\end{lemma}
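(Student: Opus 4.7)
The plan is to exploit unique factorization in $\mathcal{O}_K$ (which holds because $K$ has class number one) and to build $\alpha_1$ prime by prime, carefully tracking how complex conjugation acts on primes of $\mathcal{O}_K$. First I would partition the prime elements of $\mathcal{O}_K$, taken up to associates, into two classes: the \emph{self-conjugate} primes $\pi$ with $(\pi)=(\overline{\pi})$, and the \emph{conjugate pairs} $\{\pi,\overline{\pi}\}$ with $(\pi)\neq(\overline{\pi})$. Since complex conjugation is a ring automorphism of $\mathcal{O}_K$, the identity $v_\pi(\overline{\xi})=v_{\overline{\pi}}(\xi)$ holds for every nonzero $\xi\in\mathcal{O}_K$. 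This yields
\[
v_\pi(\xi\overline{\xi})=2v_\pi(\xi)\quad(\text{self-conjugate }\pi),\qquad v_\pi(\xi\overline{\xi})=v_{\overline{\pi}}(\xi\overline{\xi})=v_\pi(\xi)+v_{\overline{\pi}}(\xi)\quad(\text{paired }\pi).
\]
Combined with the hypothesis $\alpha\overline{\alpha}\mid\beta\overline{\beta}$, I would deduce $v_\pi(\alpha)\leq v_\pi(\beta)$ at every self-conjugate prime and $v_\pi(\alpha)+v_{\overline{\pi}}(\alpha)\leq v_\pi(\beta)+v_{\overline{\pi}}(\beta)$ at every conjugate pair.

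Next I would define $\alpha_1\in\mathcal{O}_K$ (well-defined up to a unit) by prescribing its valuations at every prime. At a self-conjugate $\pi$, set $v_\pi(\alpha_1):=v_\pi(\alpha)$. At a pair $\{\pi,\overline{\pi}\}$, writing $m:=v_\pi(\alpha)$, $m':=v_{\overline{\pi}}(\alpha)$, $n:=v_\pi(\beta)$, $n':=v_{\overline{\pi}}(\beta)$, set
\[
v_\pi(\alpha_1):=\min(m+m',\,n),\qquad v_{\overline{\pi}}(\alpha_1):=m+m'-v_\pi(\alpha_1).
\]
A short check using $m+m'\leq n+n'$ confirms $0\leq v_{\overline{\pi}}(\alpha_1)\leq n'$, so every prescribed valuation is non-negative and is dominated by the corresponding valuation of $\beta$. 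This forces $\alpha_1\mid\beta$, and it makes $v_\pi(\alpha_1\overline{\alpha_1})=v_\pi(\alpha\overline{\alpha})$ at every prime.

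To finish, $\alpha_1\overline{\alpha_1}$ and $\alpha\overline{\alpha}$ generate the same ideal, so they differ by a unit $u\in\mathcal{O}_K^\times$; under $K\hookrightarrow\mathbb{C}$ both are positive real numbers, making $u$ a positive real unit. In the cases of interest $K=\mathbb{Q}$ and $K=\mathbb{Q}(i)$ every unit of $\mathcal{O}_K$ is a root of unity, so the only positive real unit is $1$ and $\alpha_1\overline{\alpha_1}=\alpha\overline{\alpha}$ holds without any rescaling. The ``in particular'' clause is then immediate: $\gamma:=\beta/\alpha_1\in\mathcal{O}_K$ since $\alpha_1\mid\beta$, and $\gamma\overline{\gamma}=\beta\overline{\beta}/(\alpha_1\overline{\alpha_1})=b/a$.

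The main obstacle will be the valuation bookkeeping at conjugate-pair primes: the valuations of $\alpha_1$ at $\pi$ and at $\overline{\pi}$ must sum to $m+m'$ (so that $|\alpha_1|^2$ matches $a$) and each must be bounded by the corresponding valuation of $\beta$ (so that $\alpha_1\mid\beta$), and the existence of such a splitting is exactly what the inequality $m+m'\leq n+n'$ guarantees.
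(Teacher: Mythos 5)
Your valuation bookkeeping is sound, and in substance it is the same idea as the paper's proof: redistribute the prime factors of $\alpha$ within each conjugate pair $\{\pi,\overline{\pi}\}$ so that no prime occurs in $\alpha_1$ more often than in $\beta$ (the paper does this one ``bad'' prime at a time, by induction, via the replacement $\alpha\mapsto\alpha\,\overline{\pi}^{\,r-r'}/\pi^{\,r-r'}$). The gap is in your final step. Prescribing only the valuations of $\alpha_1$ determines it merely up to a unit, so what you actually get is $\alpha_1\overline{\alpha_1}=u\,\alpha\overline{\alpha}$ with $u\in\mathcal{O}_K^{\times}$ positive and real at the chosen embedding, and you still need $u=1$. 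Your justification --- all units are roots of unity --- is valid only for $K=\QQ$ and imaginary quadratic $K$, whereas the lemma is stated, and is used in Proposition~\ref{2kp1general}, for an arbitrary number field $K\subseteq\mathbb{C}$ stable under complex conjugation and of class number $1$. Such $K$ can have positive real units of infinite order: for instance $K=\QQ(\zeta_5)$ (class number $1$, closed under conjugation) contains the unit $(1+\sqrt{5})/2$, and for totally real $K$ such as $\QQ(\sqrt{2})$ every power of the fundamental unit is real. So ``positive real unit, hence equal to $1$'' fails in the stated generality; the argument as written proves the lemma only for $\QQ$ and imaginary quadratic fields, which does not suffice for Proposition~\ref{2kp1general}.

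The repair is cheap and is exactly the paper's device: do not take an abstract generator of the prescribed ideal, but define $\alpha_1:=\alpha\cdot\prod_{\{\pi,\overline{\pi}\}}\left(\overline{\pi}/\pi\right)^{e_\pi}$, the product running over one representative $\pi$ of each conjugate pair, with $e_\pi:=v_\pi(\alpha)-\min\bigl(v_\pi(\alpha)+v_{\overline{\pi}}(\alpha),\,v_\pi(\beta)\bigr)$ (the exponent may have either sign). Since each factor satisfies $\left(\overline{\pi}/\pi\right)\cdot\overline{\left(\overline{\pi}/\pi\right)}=1$, the identity $\alpha_1\overline{\alpha_1}=\alpha\overline{\alpha}$ holds on the nose, with no unit discussion, and $\alpha_1$ has exactly the valuations you prescribed, hence lies in $\mathcal{O}_K$ and divides $\beta$ by your inequalities. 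With this modification your argument is complete and amounts to the paper's induction carried out globally in a single step.
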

\begin{proof}
Since $K$ has class number $1$, $\mathcal{O}_K$ has unique factorization. We proceed by induction on the number of primes $\pi\in \mathcal{O}_K$ such that $\nu_\pi(\alpha)>\nu_\pi(\beta)$ where $\nu_\pi$ stands for the $\pi$-adic valuation. In case there is no such prime we have $\alpha\mid \beta$ and $b/a=\gamma\overline{\gamma}$ with $\gamma:=\beta/\alpha\in\mathcal{O}_K$. 

Let $\pi\in\mathcal{O}_K$ be a prime such that $\overline\pi$ is a unit multiple of $\pi$ and put $r:=\nu_\pi(\alpha)$. Then we have \[\pi^{2r}\mid \pi^r\overline{\pi}^r\mid \alpha\overline\alpha=a\mid b=\beta\overline\beta,\] so we obtain $\pi^r\mid \beta$ showing $\nu_\pi(\alpha)\leq \nu_\pi(\beta)$.

Now assume that $\overline\pi$ is not a unit multiple of $\pi$. This can only happen if $K\neq K_0$. We put $r:=\nu_\pi(\alpha)$, $r':=\nu_\pi(\beta)$, $s:=\nu_{\overline{\pi}}(\alpha)=\nu_\pi(\overline{\alpha})$, $s':=\nu_{\overline{\pi}}(\beta)=\nu_\pi(\overline{\beta})$ and note
\[r+s=\nu_\pi(\alpha\overline{\alpha})\leq \nu_\pi(\beta\overline{\beta})=r'+s'.\]
Now if $r'<r$ then we define
\[\alpha':=\alpha\cdot\frac{\overline{\pi}^{r-r'}}{\pi^{r-r'}}\]
so that $\alpha\overline{\alpha}=\alpha'\overline{\alpha'}$, $\nu_\pi(\alpha')=r'=\nu_\pi(\beta)$, and $\nu_{\overline{\pi}}(\alpha')=s+r-r'\leq s'=\nu_{\overline{\pi}}(\beta)$. The statement follows by induction as neither $\pi$, nor $\overline{\pi}$, nor any new primes appear in the denominator of $\beta/\alpha'$.
\end{proof}

\begin{proposition}\label{2kp1general}
Let $n$ be odd and assume $K$ has class number $1$. If $v\in \mathcal{O}_K^n$ is contained in an $n$-icube of norm $\lambda=v^*v$, then there exists $r\in \mathcal{O}_K$ such that $\lambda=r\overline r\in \mathcal{O}_{K_0}$.
\end{proposition}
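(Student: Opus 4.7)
My plan is to reduce the statement to Lemma \ref{quotientisnorm} via a single determinant computation. If $A=(v_1|\dotsc|v_n)\in\mathcal{O}_K^{n\times n}$ is an $n$-icube of norm $\lambda$ containing $v$ as a column, then $A^*A=\lambda I$, and taking determinants immediately yields
\[
|\det A|^2 \;=\; \overline{\det A}\cdot \det A \;=\; \det(A^*A) \;=\; \lambda^n.
\]
Setting $d:=\det A\in\mathcal{O}_K$, this exhibits $\lambda^n$ as a norm from $\mathcal{O}_K$, namely $\lambda^n=d\overline{d}$.

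Next I would exploit the hypothesis that $n$ is odd by writing $n=2m+1$. Since $\lambda\in\mathcal{O}_{K_0}$ is real, the even part $\lambda^{2m}=\lambda^m\cdot\overline{\lambda^m}$ is trivially a norm from $\mathcal{O}_K$. I would then apply Lemma \ref{quotientisnorm} with inputs $a:=\lambda^{2m}$, $b:=\lambda^{2m+1}$, $\alpha:=\lambda^m$ and $\beta:=d$: the divisibility $a\mid b$ inside $\mathcal{O}_{K_0}$ is obvious (with quotient $\lambda$), and both $a$ and $b$ are presented as the required norms. The lemma (which is exactly the place where the class-number-one hypothesis is used) then produces $\gamma\in\mathcal{O}_K$ with
\[
\lambda \;=\; \frac{b}{a} \;=\; \gamma\overline{\gamma},
\]
so that $r:=\gamma$ is the element sought in the proposition.

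Conceptually, the role of the odd exponent $n$ is to leave exactly one unpaired factor of $\lambda$ after stripping the ``obvious'' square norm $\lambda^{2m}$ from the norm $\lambda^{2m+1}=|\det A|^2$; that residual factor is forced to be a norm by the arithmetic of Lemma \ref{quotientisnorm}. If $n$ were even, the same computation would give only $\lambda^n=|\det A|^2$, which is a norm for trivial reasons and carries no information about $\lambda$ itself — aligning with the well-known counterexamples in even dimensions. I do not expect a genuine obstacle beyond invoking the lemma, since all the delicate prime-by-prime bookkeeping (split, inert and ramified primes in $K/K_0$, together with the control coming from unique factorisation in $\mathcal{O}_K$) has already been dispatched there.
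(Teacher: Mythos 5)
Your argument is correct and coincides with the paper's proof: both show that $\lambda^n=\det(A)\overline{\det(A)}$ and $\lambda^{n-1}=\lambda^{\frac{n-1}{2}}\cdot\overline{\lambda^{\frac{n-1}{2}}}$ are norms from $\mathcal{O}_K$ and then invoke Lemma~\ref{quotientisnorm} to conclude that their quotient $\lambda$ is a norm. No substantive difference from the paper's route.
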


\begin{proof}
Assume $A\in \mathcal{O}_K^{n\times n}$ is an extension of $v$ to an $n$-icube of norm $\lambda$. We compute
\[\lambda^n = |v|^{2n}=\det(|v|^2 I) = \det(A^*A) = \det(A)\cdot \overline{\det(A)},\]
while
\[\lambda^{n-1}=\lambda^{\frac{n-1}{2}}\cdot \overline{\lambda^{\frac{n-1}{2}}}.\]

We conclude that both $\lambda^n$ and $\lambda^{n-1}\in \mathcal{O}_{K_0}$ are in the form $y\overline y$ for some $y\in \mathcal{O}_K$. The statement then follows from Lemma~\ref{quotientisnorm}.
\end{proof}

\begin{proof}[Proof of Propositions~\ref{2kp1}~and~\ref{2kp1i}] Applying Proposition~\ref{2kp1general} to $K=\QQ$ and $K=\QQ(i)$ gives Proposition~\ref{2kp1} and Proposition~\ref{2kp1i}, respectively.
\end{proof}

\begin{remark} The assumption that $K$ has class number $1$ cannot be removed 
from the statement of Lemma \ref{quotientisnorm} in general. The simplest counterexample we managed to find is $K=\QQ(\sqrt{-23})$, $a=9$, $b=27$. Then $b/a=3$ is not a norm of an element in $\mathcal{O}_K$, even though $a,b$ are.
\end{remark}

Let us continue with
\begin{proof}[Proof of Lemma \ref{SNFlemma}] Assume that
\[
A=S\ \diag(\alpha_1,\dotsc,\alpha_n)\ T,
\]
with $\alpha_1,\dotsc,\alpha_n\in R$ satisfying $\alpha_1\mid \dotsc\mid \alpha_n$, and $S,T\in\GL_n(R)$. By assumption, $A^*A=\lambda I$, hence we have
\[
A^*=\lambda A^{-1} = T^{-1}\ \lambda\ \diag(\alpha_1^{-1},\dotsc,\alpha_n^{-1})\ S^{-1}.
\]
Setting $W=W^{-1}$ for the permutation matrix with $1$'s in the antidiagonal, we can rewrite the previous equation as
\[
A^*=T^{-1}\ W\ W\ \lambda\ \diag(\alpha_1^{-1},\dotsc,\alpha_n^{-1})\ W\ W\ S^{-1},
\]
or equivalently,
\[
W\ T\ A^*S\ W=\diag(\lambda/\alpha_n,\dotsc,\lambda/\alpha_1).
\]
We have that $WT,SW\in\GL_n(R)$, hence the diagonal matrix on the right-hand side is a Smith normal form of $A^*\in \GL_n(R)\diag(\overline\alpha_1,\dotsc,\overline\alpha_n)\GL_n(R)$. Then the uniqueness of the Smith normal form yields that $\overline\alpha_j \alpha_{n+1-j} = \lambda \cdot \text{unit}$ for each $1\leq j\leq n$, and the $\text{unit}$ in question has to be $1$ by the choice $\Re\alpha_j>0$, $\Im\alpha_j\geq 0$.
\end{proof}

\begin{proof}[Proof of Theorem \ref{4kp2}] First we fix some notations. Let $n=4k+2$, $v\in\ZZ^n$ be contained in the $n$-icube $A\in\ZZ^{n\times n}$, and let $S,T\in\GL_n(\ZZ)$ satisfy that
\[
A=S\ \diag(\alpha_1,\dotsc,\alpha_n)\ T
\]
with $\alpha_1\mid \dotsc\mid \alpha_n$ and $\alpha_1,\dotsc,\alpha_n>0$.

For the sake of contradiction, assume that $|v|^2$ is not the sum of two squares. Then choose a prime number $p\equiv 3\bmod 4$ such that $\nu_p(|v|^2)=2\ell+1$ with some integer $\ell\geq 0$, where $\nu_p$ stands for the $p$-adic valuation. Write $S$ in the $(2k+1)\times (2k+1)$ block form
\[
S=\begin{pmatrix}
S_{11} & S_{12} \\ S_{21} & S_{22}
\end{pmatrix}.
\]
Since $\det S=\pm 1$, $S$ is of full rank modulo $p$, hence we may assume that $p\nmid \det S_{11}$ by permuting its rows (accordingly the rows of $A$ and the coordinates of $v$).

By the assumption that $A$ is an $n$-icube of norm $|v|^2$, we have that
\[
\begin{split}
|v|^2 I &= A^T A = (S\ \diag(\alpha_1,\dotsc,\alpha_n)\ T)^T (S\ \diag(\alpha_1,\dotsc,\alpha_n)\ T) \\ &= T^T\ \diag(\alpha_1,\dotsc,\alpha_n)\ S^T S\  \diag(\alpha_1,\dotsc,\alpha_n)\ T\equiv 0 \bmod p^{2\ell+1}.
\end{split}
\]
By $\det T=\pm 1$, this implies that
\[
\diag(\alpha_1,\dotsc,\alpha_n)\ S^T S\  \diag(\alpha_1,\dotsc,\alpha_n) \equiv 0 \bmod p^{2\ell+1},
\]
in particular, for any $1\leq i,j\leq n$,
\[
\alpha_i (S^T S)_{i,j}\alpha_j \equiv 0 \bmod p^{2\ell+1}.
\]
For any $1\leq j\leq 2k+1$, we combine $\nu_p(\alpha_j)+\nu_p(\alpha_{n+1-j})=2\ell+1$ from Lemma~\ref{SNFlemma} with $\nu_p(\alpha_j)\leq \nu_p(n+1-\alpha_j)$ to see that $\nu_p(\alpha_j)\leq \ell$. Together with the previous display (letting $i$ and $j$ simultaneously run from $1$ to $2k+1$), this implies that the top-left $(2k+1)\times (2k+1)$ block of $S^T S$ is entrywise divisible by $p$, which, expressed in the block form, reads
\[
S_{11}^TS_{11}+S_{21}^TS_{21} \equiv 0 \Mod p.
\]
Taking determinants, we finally arrive at
\[
(\det S_{11})^2+(\det S_{21})^2 \equiv 0 \Mod p,
\]
which contradicts the joint choice of $p$ (being $3$ modulo $4$) and $S_{11}$ (being invertible modulo $p$).
\end{proof}

\section{Orthoregular bases and factorizations in quaternion orders}\label{QuaternionSection}

For this section let $K$ be either an imaginary quadratic field or $\mathbb Q$ and put $R:=\mathcal{O}_K$ for the ring of integers in $K$. Let
\[
M:=\begin{pmatrix} \alpha & \beta \\ \overline\beta & \gamma \end{pmatrix}\in K^{2\times 2}
\]
be a positive definite self-adjoint matrix (in particular, $\alpha,\gamma\in\QQ$), and put
\[
Q(v):=v^*Mv, \qquad v\in K^2
\]
for the corresponding hermitian form.

Observe that if $A$ is a matrix corresponding to a $Q$-orthoregular basis of norm $\lambda$ then $A^*MA=\lambda I$, thus $|\det(A)|^2\det(M)=\lambda^2$, so $\mu=\det(M)$ has to be an absolute square in $K$.

More generally, $\mu$ factorizes as
\[
\mu=\Delta\varepsilon,
\]
where $\Delta$ is an absolute square in $K$
and the only absolute square in $K$ which divides $\varepsilon$ is $1$, recall \eqref{normfree}.
Then an ordered pair of vectors $(a_1,a_2)\in K^2\times K^2$ is a $Q$-orthobalanced basis of norm $\lambda$ if
\[
A^*MA=\lambda\cdot\begin{pmatrix}
1 & 0 \\ 0 & \varepsilon
\end{pmatrix}, \qquad A:=\begin{pmatrix}
a_1 & a_2
\end{pmatrix}.
\]
(Note that this notion reproduces $Q$-orthoregularity if $\varepsilon=1$.)

First we prove a rational version of Proposition \ref{perpprop}.

\begin{proposition}\label{perpprop2}~
If $(a_1,a_2)\in K^2\times K^2$ is a
$Q$-orthobalanced basis of norm $\lambda$, then $a_2=(Ma_1)_\perp/\delta$ for some $\delta\in K$, $|\delta|^2=\Delta$.
\end{proposition}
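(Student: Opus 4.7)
The strategy is to unpack the $Q$-orthobalanced condition $A^*MA=\lambda\,\diag(1,\varepsilon)$ into the four scalar equations
\[
a_1^*Ma_1=\lambda,\qquad a_2^*Ma_2=\lambda\varepsilon,\qquad a_1^*Ma_2=0,\qquad a_2^*Ma_1=0,
\]
and then to use the off-diagonal equations to determine $a_2$ up to a scalar, and the second diagonal equation to pin down the modulus of that scalar. First I would rewrite the off-diagonal equation $a_1^*Ma_2=0$ using $M^*=M$ as $(Ma_1)^*a_2=0$, which says that $a_2$ is orthogonal to $Ma_1$ with respect to the standard hermitian inner product on $K^2$. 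Since $(Ma_1)_\perp$ is a nonzero $K$-multiple generator of the $1$-dimensional standard-orthogonal complement of $Ma_1$ (it is nonzero because $M$ is invertible and $a_1\neq 0$, as otherwise $a_1^*Ma_1=0\neq\lambda$), it follows that $a_2=(Ma_1)_\perp/\delta$ for some unique $\delta\in K^\times$.

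The main computation is then to check that the remaining condition $a_2^*Ma_2=\lambda\varepsilon$ forces $|\delta|^2=\Delta$. The cleanest way I see is to prove the auxiliary identity
\[
v_\perp^*\, M\, v_\perp \;=\; \det(M)\cdot v^*M^{-1}v, \qquad v\in K^2,
\]
which is a $2\times 2$ adjugate identity: expanding both sides with $M=\bigl(\begin{smallmatrix}\alpha&\beta\\ \overline\beta&\gamma\end{smallmatrix}\bigr)$ and $v=(v_1,v_2)^T$, each side simplifies to $\alpha|v_2|^2-\beta v_2\overline{v_1}-\overline\beta v_1\overline{v_2}+\gamma|v_1|^2$ (after multiplication by $\mu=\det M$ on the right-hand side). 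Substituting $v=Ma_1$, using $M^*=M$, the right-hand side becomes $\mu\cdot a_1^*Ma_1=\mu\lambda$, so
\[
(Ma_1)_\perp^*\,M\,(Ma_1)_\perp \;=\; \mu\lambda.
\]
Dividing by $|\delta|^2$ and comparing with $a_2^*Ma_2=\lambda\varepsilon$ gives $\mu\lambda/|\delta|^2=\lambda\varepsilon$, hence $|\delta|^2=\mu/\varepsilon=\Delta$, as required.

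The final point to address is that $\delta$ genuinely lies in $K$ (not just in $\mathbb C$): this is immediate from $\delta=(Ma_1)_{\perp,j}/(a_2)_j$ for any index $j$ at which $(a_2)_j\neq 0$, since $a_2,(Ma_1)_\perp\in K^2$ and the two vectors are proportional. There is no serious obstacle in this argument; the only thing to be careful about is getting the identity $v_\perp^*Mv_\perp=\det(M)\cdot v^*M^{-1}v$ right (equivalently, remembering that $M^{-1}$ equals $\det(M)^{-1}$ times the classical adjugate, which via the perp operation becomes $(v_\perp)(v_\perp)^*$ after pairing). Note also that we never used positive-definiteness of $M$ beyond ensuring $M$ is invertible and $a_1\neq 0$, which matches the generality of the statement.
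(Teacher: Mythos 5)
Your proof is correct and follows essentially the same route as the paper's: the orthogonality relation forces $a_2$ to be a $K$-multiple of $(Ma_1)_\perp$, the key identity $Q((Ma_1)_\perp)=\mu\,Q(a_1)$ is established, and comparison with $Q(a_2)=\lambda\varepsilon$ yields $|\delta|^2=\mu/\varepsilon=\Delta$. The only (cosmetic) difference is that you verify the key identity via the adjugate relation $v_\perp^*Mv_\perp=\det(M)\,v^*M^{-1}v$ evaluated at $v=Ma_1$, whereas the paper obtains it by applying its completing-the-square identity \eqref{Qsquare} twice.
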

\begin{proof}
Note that for any $x,y\in K$ we have
\begin{equation}\label{Qsquare}
\alpha Q((x,y)^T)=|\alpha x+\beta y|^2+(\alpha\gamma-|\beta|^2)|y|^2=|\alpha x+\beta y|^2+\mu|y|^2.
\end{equation}
If $a_1:=(x,y)^T$ and $a_2$ form a $Q$-orthobalanced basis, 
then $a_2^*Ma_1=0$, which implies that
\begin{equation}\label{Maperpcoord}
a_2
=(Ma_1)_\perp/\delta=\left(-(\overline\beta x+\gamma y), \alpha x+\beta y\right)^*/\delta
\end{equation}
for some $\delta\in K$. Applying (\ref{Qsquare}) twice, we have that
\[
\begin{split}
\alpha Q((Ma_1)_\perp)&=|-\alpha\cdot(\overline{\overline\beta x+\gamma y})+\beta\cdot(\overline{\alpha x+\beta y})|^2+\mu|\overline{\alpha x+\beta y}|^2\\
&=|-\mu y|^2+\mu|\alpha x+\beta y|^2=\alpha\mu Q(a_1),
\end{split}
\]
hence
\[
Q((Ma_1)_\perp)=\mu Q(a_1).
\]
Finally, we obtain that
\[
\varepsilon Q(a_1)=Q(a_2)=Q((Ma_1)_\perp)/|\delta|^2=\mu Q(a_1)/|\delta|^2,
\]
which implies $|\delta|^2=\mu/\varepsilon=\Delta$, and the proof is complete.
\end{proof}

Let us call the
$Q$-orthobalanced basis $(a_1,a_2)\in K^2\times K^2$ of type $\delta$ if $a_2=(Ma_1)_\perp/\delta$.

\begin{lemma}\label{perpconverse}
Let $a_1\in K^2$, $\delta\in K$, and $a_2=(Ma_1)_\perp/\delta$ be such that $Q(a_1)=\lambda$ and $|\delta|^2=\Delta$. Then $(a_1,a_2)$ is a $Q$-orthobalanced basis of norm $\lambda$ and of type $\delta$.
\end{lemma}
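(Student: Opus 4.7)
The plan is to verify directly that $A^*MA=\lambda\cdot\diag(1,\varepsilon)$ for $A:=(a_1|a_2)$. By the definition of $Q$-orthobalancedness this establishes norm $\lambda$, while the type $\delta$ condition holds by construction.

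The $(1,1)$ entry is $a_1^*Ma_1=Q(a_1)=\lambda$ by hypothesis. For the off-diagonal entries, I would use the trivial identity $w^*w_\perp=0$ valid for any $w\in K^2$ (immediate from \eqref{perpdef}) together with the self-adjointness $M=M^*$ to write
\begin{equation*}
a_1^*Ma_2=\tfrac{1}{\delta}a_1^*M(Ma_1)_\perp=\tfrac{1}{\delta}(Ma_1)^*(Ma_1)_\perp=0,
\end{equation*}
and the $(2,1)$ entry is its complex conjugate. For the $(2,2)$ entry, I compute
\begin{equation*}
a_2^*Ma_2=\tfrac{1}{|\delta|^2}Q\bigl((Ma_1)_\perp\bigr)=\tfrac{1}{\Delta}Q\bigl((Ma_1)_\perp\bigr),
\end{equation*}
and invoke the key identity $Q((Ma_1)_\perp)=\mu Q(a_1)$, which is the heart of the proof of Proposition~\ref{perpprop2} and was derived there via a double application of \eqref{Qsquare}. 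Substituting yields $a_2^*Ma_2=\mu\lambda/\Delta=\varepsilon\lambda$, as required.

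There is essentially no obstacle here: the lemma is the converse of Proposition~\ref{perpprop2}, and all the nontrivial algebra — namely the identity $Q((Ma_1)_\perp)=\mu Q(a_1)$ — is already established, so the present argument just amounts to running that computation in reverse.
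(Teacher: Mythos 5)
Your proof is correct and follows essentially the same route as the paper, namely verifying directly that $A^*MA=\lambda\,\diag(1,\varepsilon)$; the only (harmless) variation is in the $(2,2)$ entry, where the paper gets $Q(a_2)=\lambda\varepsilon$ by comparing determinants of $A^*MA$, while you reuse the identity $Q((Ma_1)_\perp)=\mu Q(a_1)$ from the proof of Proposition~\ref{perpprop2}. Your explicit off-diagonal computation via $w^*w_\perp=0$ and $M=M^*$ just spells out what the paper leaves implicit, so there is no gap.
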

\begin{proof}
If $A=\begin{pmatrix} a_1 & a_2\end{pmatrix}$ then $A^*MA$ has the form $\left(\begin{smallmatrix} \lambda & 0 \\ 0 & Q(a_2)\end{smallmatrix}\right)$ and computing determinants gives that $Q(a_2)=\lambda\varepsilon$. 
\end{proof}

Let $\mathcal B_Q(\delta,\lambda)$ denote the set of all  $Q$-orthobalanced bases of norm $\lambda$ and of type $\delta$.

This set will be related to certain factorizations in 
\begin{equation}\label{Ldef}
L:=\{r+s\sqrt{\varepsilon}j\mid r,s\in K\}\leq \mathbb H=\{a+bi+cj+dk\mid a,b,c,d\in\mathbb R\},
\end{equation}
which is isomorphic to the quadratic field $\QQ(\sqrt{-\varepsilon})$ (of fundamental discriminant $\varepsilon$) for $K=\QQ$ and to a quaternion algebra for an imaginary quadratic $K$.

For $t\in L$ and $\nu\in \mathbb Q$ let $\mathcal F_\nu(t)$ denote the set of factorizations
\[\mathcal F_\nu(t)=\{(u,v)\in L^2\mid t=uv,|u|^2=\nu\}.\]
\begin{proposition}\label{factorizationthm}
The map $F:K^2\times K^2\to L^2$ given by
\[
F((x,y)^T,(w,z)^T):=\left(\overline z+\overline y\sqrt{\varepsilon}j,-w+\overline x\sqrt{\varepsilon}j\right)
\]
restricts to a bijecton between the sets $\mathcal B_Q(\delta,\lambda)$ and $\mathcal F_{\frac{\alpha\lambda}{\Delta}}\left(\frac{\lambda}{\Delta}(\beta+\delta\sqrt{\varepsilon}j)\right)$.
\end{proposition}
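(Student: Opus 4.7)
The plan is to establish the bijection by checking injectivity directly and constructing an explicit inverse; the content is that the factorization structure in $L$ matches the orthobalance structure coordinate by coordinate under $F$. Injectivity is immediate from the formula: writing $F((x,y)^T,(w,z)^T)=(u_0+u_1\sqrt{\varepsilon}j,\ v_0+v_1\sqrt{\varepsilon}j)$, we can read off $z=\overline{u_0}$, $y=\overline{u_1}$, $w=-v_0$, $x=\overline{v_1}$, so $F$ is injective on all of $K^2\times K^2$.

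To see that $F$ sends $\mathcal{B}_Q(\delta,\lambda)$ into $\mathcal{F}_{\alpha\lambda/\Delta}(t)$ with $t:=(\lambda/\Delta)(\beta+\delta\sqrt{\varepsilon}j)$, I would first compute the quaternion norm using $jr=\overline{r}j$ for $r\in K$ and $(\sqrt{\varepsilon}j)^2=-\varepsilon$; this gives $|u|^2=|z|^2+\varepsilon|y|^2$. The relation $a_2=(Ma_1)_\perp/\delta$ yields $z=(\alpha\overline{x}+\overline{\beta}\overline{y})/\delta$, so $|z|^2=|\alpha x+\beta y|^2/\Delta$, and combining with identity \eqref{Qsquare} in the form $\alpha\lambda=|\alpha x+\beta y|^2+\mu|y|^2$ we obtain $|u|^2=\alpha\lambda/\Delta$. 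A direct expansion of the quaternion product yields
\[
uv=(-\overline{z}w-\varepsilon x\overline{y})+(\overline{z}\overline{x}-\overline{y}\overline{w})\sqrt{\varepsilon}j.
\]
Substituting the explicit formulas for $z,w$ coming from $\delta a_2=(Ma_1)_\perp$ and using $\mu=\alpha\gamma-|\beta|^2=\Delta\varepsilon$, the real part simplifies (after factoring out $\beta$) to $\beta Q(a_1)/\Delta=\beta\lambda/\Delta$, while the coefficient of $\sqrt{\varepsilon}j$ collapses to $Q(a_1)/\overline{\delta}=\lambda\delta/\Delta$. Hence $uv=t$.

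For surjectivity, given $(u,v)\in\mathcal{F}_{\alpha\lambda/\Delta}(t)$ I would define $a_1:=(\overline{v_1},\overline{u_1})^T$ and $a_2:=(-v_0,\overline{u_0})^T$ and appeal to Lemma~\ref{perpconverse}: it is enough to verify $Q(a_1)=\lambda$ and $a_2=(Ma_1)_\perp/\delta$. The key step is to multiply $uv=t$ on the left by $\overline{u}$; since $\overline{u}u=|u|^2=\alpha\lambda/\Delta\neq 0$, we get $v=(\Delta/(\alpha\lambda))\,\overline{u}t$, and expanding this quaternion product yields the closed forms
\[
v_0=(\overline{u_0}\beta+\varepsilon u_1\overline{\delta})/\alpha,\qquad v_1=(\overline{u_0}\delta-u_1\overline{\beta})/\alpha.
\]
Plugging these into $(Ma_1)_\perp=(-(\beta\overline{x}+\gamma\overline{y}),\ \alpha\overline{x}+\overline{\beta}\overline{y})^T$ and simplifying with $\mu=\alpha\gamma-|\beta|^2$ verifies $(Ma_1)_\perp=\delta a_2$ coordinate by coordinate. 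Expanding $Q(a_1)=\alpha|v_1|^2+\beta v_1\overline{u_1}+\overline{\beta}\overline{v_1}u_1+\gamma|u_1|^2$ with the above expressions makes the mixed $u\overline{\delta}$ terms cancel, leaving $\Delta(|u_0|^2+\varepsilon|u_1|^2)/\alpha=\Delta|u|^2/\alpha=\lambda$.

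The main obstacle is bookkeeping rather than insight: signs coming from $j^2=-1$, from conjugation in $K$ versus in $\mathbb{H}$, and from the involution $(x,y)^T\mapsto(-\overline{y},\overline{x})^T$ must all line up. The conceptual core is that the quaternion norm on $L$ is proportional to $Q$ under the parametrization encoded by $F$, with proportionality constant $\alpha$; this explains both the normalization $\alpha\lambda/\Delta$ on the factorization side and the specific shape of the target element $t$.
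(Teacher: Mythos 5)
Your proposal is correct and follows essentially the same route as the paper: the forward inclusion via the identity \eqref{Qsquare} and a direct expansion of the quaternion product $uv$, and the converse by verifying the hypotheses of Lemma~\ref{perpconverse} for the read-off preimage. The only cosmetic difference is in the converse, where you recover $v$ explicitly as $\frac{\Delta}{\alpha\lambda}\overline{u}\,t$ and check both conditions coordinatewise, while the paper builds the preimage from $u$ and $\delta$ alone and then matches the $v$-component by cancellation in the division algebra $L$ --- both are the same use of the invertibility of $u$.
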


\begin{proof}
It is obvious that $F$ is a bijection from $K^{2\times 2}$ to $L^2$.

If we restrict $F$ to $\mathcal B_Q(\delta,\lambda)$, then by Proposition~\ref{perpprop2},
\[
\begin{pmatrix}
w \\ z
\end{pmatrix} = \frac{1}{\delta}\left(M\begin{pmatrix}
x \\ y
\end{pmatrix}\right)_{\perp} = \frac{1}{\delta} \begin{pmatrix}
-\beta \overline{x} - \gamma \overline{y}\\ \alpha \overline{x} + \overline{\beta} \overline{y}
\end{pmatrix}.
\]

Setting
\[
u:=\overline z+\overline y\sqrt{\varepsilon}j,\qquad v:=-w+\overline x\sqrt{\varepsilon}j,
\]
by (\ref{Qsquare}),
\begin{equation}\label{u2eq}
|u|^2=|\overline z+\overline y\sqrt{\varepsilon}j|^2=\left|\frac{\alpha x+\beta y}{\overline\delta}\right|^2+\varepsilon |\overline y|^2=\frac1\Delta\left(|\alpha x+\beta y|^2+\Delta\varepsilon|y|^2\right)=\frac{\alpha\lambda}{\Delta}.
\end{equation}
On the other hand, using that for $s\in K$, we have $sj=j\overline s$, we see that
\begin{equation}\label{uveq}
\begin{split}
uv&=(\overline z+\overline y\sqrt{\varepsilon}j)(-w+\overline x\sqrt{\varepsilon}j)=-(\overline zw+\varepsilon\overline yx)+\overline{(zx-yw)}\sqrt{\varepsilon}j\\ 
&=\frac1\Delta\cdot\left(((\alpha x+\beta y)(\beta\overline x+\gamma\overline y)-\Delta\varepsilon\overline y x)+((\alpha x+\beta y)\overline x+\overline y(\overline \beta x+\gamma y))\delta\sqrt{\varepsilon}j\right) \\ 
&=\frac1\Delta(\beta Q(a_1) +\delta Q(a_1)\sqrt{\varepsilon}j)=\frac\lambda\Delta(\beta+\delta\sqrt{\varepsilon}j).
\end{split}
\end{equation}
Then \eqref{u2eq} and \eqref{uveq} together show that $\mathcal B_Q(\delta,\lambda)$ is indeed mapped to $\mathcal F_{\frac{\alpha\lambda}{\Delta}}\left(\frac{\lambda}{\Delta}(\beta+\delta\sqrt{\varepsilon}j)\right)$.

For the converse, assume that $(u,v)\in \mathcal{F}_{\frac{\alpha\lambda}{\Delta}}\left(\frac{\lambda}{\Delta}(\beta+\delta\sqrt{\varepsilon}j)\right)$, i.e.
\[
|u|^2=\frac{\alpha\lambda}{\Delta},\qquad uv=\frac{\lambda}{\Delta}(\beta+\delta\sqrt{\varepsilon}j),
\]
and we give explicitly $F^{-1}(u,v)\in\mathcal B_Q(\delta,\lambda)$. Writing $u=r+s\sqrt{\varepsilon} j\in L$ with $r,s\in K$, we first choose
\begin{equation}\label{eq:choice-for-u}
z:= \overline{r},\qquad y:=\overline{s},
\end{equation}
and complete this data with
\[
x:=\frac{\overline{\delta}\overline{z}-\beta y}{\alpha},\qquad w:= - \frac{\beta\overline{x}+\overline{\gamma y}}{\delta}.
\]
Applying \eqref{Qsquare}, we see that
\[\alpha Q((x,y)^T)=|(\overline\delta\overline z-\beta y)+\beta y|^2+\Delta\varepsilon|y|^2=\Delta |u|^2=\alpha\lambda,\]
hence $Q((x,y)^T)=\lambda$, and then Lemma~\ref{perpconverse} shows that $((x,y)^T,(w,z)^T)\in\mathcal B_Q(\delta,\lambda)$.

To complete the proof of this direction, we have to verify that $F((x,y)^T,(w,z)^T)=(u,v)$. As for the $u$-part, this is clear from \eqref{eq:choice-for-u} and the definition of $F$. As for the $v$-part, we have to check that $v':=-w+\overline{x}\sqrt{\varepsilon}j$ is the same as the originally given $v$. This indeed holds, since $uv=\frac{\lambda}{\Delta}(\beta+\delta\sqrt{\varepsilon}j)$ (by assumption), $uv'=\frac{\lambda}{\Delta}(\beta+\delta\sqrt{\varepsilon}j)$ (by the already proven direction), and $L$ is a division algebra.
\end{proof}

From now on assume that $M$ has entries in $R$ and $\lambda\in R$. Ultimately we are not satisfied with finding $Q$-orthobalanced bases, but we want that they consist
of integral vectors, such a setup will be referred to from now on as integral $Q$-orthobalanced basis. Recall that $A^*MA=\lambda I$ implies that $\mu=\det M$ divides
$\lambda^2$ if $A\in R^{2\times 2}$. This, however, alone does not imply the existence of an integral $Q$-orthobalanced basis even in the simplest situation. Set for example, with $K=\QQ$,
\[
M:=\begin{pmatrix}
9 & 0 \\ 0& 1    
\end{pmatrix},
\]
then $\mu=\Delta=9$, $\varepsilon=1$, and there is no $Q$-orthobalanced basis of norm $3$ for that $a_1:=(x,y)^T$ should satisfy $3=9x^2+y^2$ which cannot be solved over the integers.

In the following investigations, we impose that $\Delta$ divides
$\lambda$ and it turns out that the existence of an integral $Q$-orthobalanced basis can be proven in this situation under some controllable assumptions which can be checked in the concrete situations we encounter later.

Let $S=\{r+s\sqrt{\varepsilon}j\mid r,s\in R\}\subset L$. Then $S$ is clearly an order in $L$.

\begin{proposition}\label{integralfactorizationthm}
Let $\lambda=\nu\Delta$ for some $\nu\in\mathbb Z$.
\begin{enumerate}
\item If $(a_1,a_2)\in R^2\times R^2$
is an integral $Q$-orthobalanced basis of norm $\lambda$ and $\delta$ is as in Proposition \ref{perpprop2}, then $\nu\delta\in R$.
\item There exists an integral $Q$-orthobalanced basis of norm $\lambda$ and of type $\delta\in\frac{1}{\nu} R$ (with $|\delta|^2=\Delta$) if and only if there exist $u,v\in S$ such that $uv=\nu(\beta+\delta\sqrt{\varepsilon}j)$ and $|u|^2=\alpha\nu$, $|v|^2=\gamma\nu$ (where $\alpha,\beta$ and $\gamma$ are the entries of $M$).
\item If $K$ is of class number one, then any $a_1\in R^2$
with $Q(a_1)=\lambda$ can be extended to an integral $Q$-orthobalanced basis of norm $\lambda$.
\end{enumerate}
\end{proposition}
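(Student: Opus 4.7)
The plan is to view all three parts through the lens of Proposition~\ref{factorizationthm}: parts (1) and (2) will follow by restricting the bijection $F$ to integral data on both sides, while for (3) I will produce an explicit common divisor of the entries of $Ma_1$ of norm $\Delta$ and then invoke (2). For part (1), starting from an integral basis $(a_1, a_2) \in R^2 \times R^2$, the formulas in Proposition~\ref{factorizationthm} immediately give $u = \overline z + \overline y \sqrt{\varepsilon}j \in S$ and $v = -w + \overline x \sqrt{\varepsilon}j \in S$, hence $uv \in S$; since $uv = \nu\beta + \nu\delta\sqrt{\varepsilon}j$ by the same proposition, comparing $\sqrt{\varepsilon}j$-coefficients forces $\nu\delta \in R$. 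Part (2) is essentially the same bijection restricted to integral data: the forward direction combines (1) with Proposition~\ref{factorizationthm}, and the reverse direction uses the explicit inverse of $F$ from the proof of Proposition~\ref{factorizationthm}, which visibly sends $(u,v) \in S \times S$ with the prescribed norms and product to $(a_1, a_2) \in R^2 \times R^2$.

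For part (3), fix $a_1 = (x,y)^T \in R^2$ with $Q(a_1) = \lambda$. Let $b := Ma_1 = (b_1, b_2)^T \in R^2$ and $d := \gcd(b_1, b_2) \in R$, well-defined up to a unit since class number one makes $R$ a PID. The plan is to exhibit a divisor $\delta' \in R$ of $d$ with $|\delta'|^2 = \Delta$; I will then take $\delta := \overline{\delta'} \in R \subseteq \frac{1}{\nu}R$ and $a_2 := (Ma_1)_\perp/\delta$. Integrality of $a_2$ is automatic since $\delta' \mid b_1$ and $\delta' \mid b_2$ in $R$, and Lemma~\ref{perpconverse} then delivers the integral $Q$-orthobalanced basis.

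The crux is to establish $\Delta \mid |d|^2$ in $\mathbb{Z}$, and I expect this to be the main obstacle. The key input is the direct identity
\[
b_1\overline{b_2} = \beta\lambda + \mu x\overline y = \Delta(\beta\nu + \varepsilon x\overline y),
\]
verified by expanding $b_1\overline{b_2} = (\alpha x + \beta y)(\beta\overline x + \gamma\overline y)$ and subtracting $\beta Q(a_1)$. This gives $\Delta \mid b_1\overline{b_2}$ in $R$ and, by conjugation, $\Delta \mid b_2\overline{b_1}$; together with $\Delta \mid |b_i|^2$ ($i=1,2$) from the formulas $|b_1|^2 = \Delta(\alpha\nu - \varepsilon|y|^2)$ and $|b_2|^2 = \Delta(\gamma\nu - \varepsilon|x|^2)$ derived in the proof of Proposition~\ref{perpprop2}, a prime-by-prime argument yields $\Delta \mid |d|^2$. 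The nontrivial case is a split prime $\pi$ of $\mathbb{Z}[i]$: setting $a_i := \nu_\pi(b_i)$ and $\tilde a_i := \nu_{\overline\pi}(b_i)$, the four inequalities $a_1 + \tilde a_1 \geq \nu_p(\Delta)$, $a_2 + \tilde a_2 \geq \nu_p(\Delta)$, $a_1 + \tilde a_2 \geq \nu_p(\Delta)$, $a_2 + \tilde a_1 \geq \nu_p(\Delta)$ cover all four combinatorial subcases for which pair of summands realizes $\min(a_1,a_2) + \min(\tilde a_1, \tilde a_2)$, so this sum equals $\nu_p(|d|^2)$ and is always $\geq \nu_p(\Delta)$. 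Ramified, inert, and rational prime cases are handled analogously by the $|b_i|^2$ constraints alone. With $\Delta \mid |d|^2$ in hand, constructing $\delta' \mid d$ with $|\delta'|^2 = \Delta$ reduces to distributing the valuation of $\delta'$ at each prime compatibly with that of $d$, which is possible precisely because $\nu_\pi(d) + \nu_{\overline\pi}(d) \geq \nu_p(\Delta)$.
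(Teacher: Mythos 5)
Your proposal is correct and follows essentially the same route as the paper: parts (1)--(2) by restricting the bijection $F$ of Proposition~\ref{factorizationthm} to integral data on both sides, and part (3) by the same three identities for $|b_1|^2$, $|b_2|^2$, $b_1\overline{b_2}$ together with the same prime-by-prime valuation argument in the class-number-one ring, finishing with Lemma~\ref{perpconverse}. The only (harmless) difference is that you extract a divisor $\delta'\in R$ of $\gcd(b_1,b_2)$ with $|\delta'|^2=\Delta$ directly from $Ma_1$, whereas the paper works with $\nu(Ma_1)_\perp$ and a common divisor $\delta_0$ of norm $\nu^2\Delta$ and then sets $\delta=\delta_0/\nu$.
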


\begin{proof} We will use the notations and the results of Proposition~\ref{factorizationthm}.
\begin{enumerate}
\item Assume that $(a_1,a_2)\in R^2\times R^2$ is an integral $Q$-orthobalanced basis of norm $\lambda$ with $\delta$ as in Proposition \ref{perpprop2}. Setting then $(u,v):=F(a_1,a_2)$, we see that $u,v\in S$,
hence $uv=\nu(\beta+\delta\sqrt{\varepsilon}j)\in S$, which implies $\nu\delta\in R$.
\item Assume there exist $u,v\in S$ such that $uv=\nu(\beta+\delta\sqrt{\varepsilon}j)$ and $|u|^2=\alpha\nu$, $|v|^2=\gamma\nu$. Considering $(a_1,a_2):=F^{-1}(u,v)$, from the explicit definition of $F$ we see that $F^{-1}(S^2)\subseteq R^2\times R^2$, hence the pair $(a_1,a_2)$ is an integral $Q$-orthobalanced basis of norm $\lambda$. To see that $\delta\in\frac{1}{\nu}R$, we refer to the already proven part (1). As for the converse, assume that $(a_1,a_2)\in R^2\times R^2$ is an integral $Q$-orthobalanced basis of norm $\lambda$ and of type $\delta\in\frac{1}{\nu} R$ (with $|\delta|^2=\Delta$). Then for $(u,v):=F(a_1,a_2)$, we see immediately $uv=\nu(\beta+\delta\sqrt{\varepsilon}j)$ and $|u|^2=\alpha\nu$ from Proposition~\ref{factorizationthm}, while
\[
|v|^2=\frac{|uv|^2}{|u|^2}=\frac{\nu^2(\beta+\delta\sqrt{\varepsilon}j)(\overline{\beta}-\overline{\delta}\overline{\sqrt{\varepsilon}}j)}{\alpha\nu}=\frac{\nu^2(|\beta|^2+|\delta|^2\varepsilon)}{\alpha\nu}=\frac{\nu^2\alpha\gamma}{\alpha\nu}=\gamma\nu.
\]

\item Let $\nu(Ma_1)_\perp=(w_0,z_0)^T$.
Substituting $z_0:=\nu\delta'z$, $w_0:=\nu\delta'w$
with some $\delta'\in K$ satisfying $|\delta'|^2=\Delta$, we see that $(a_1,(w,z)^T)$ is a $Q$-orthobalanced basis of norm $\lambda$ and type $\delta'$ by Lemma \ref{perpconverse}. Hence we may use \eqref{u2eq} and \eqref{uveq} to compute
\begin{equation*}\label{eq:z0w0}
\begin{split}
&|z_0|^2=\nu^2\Delta(\alpha\nu-\varepsilon|y|^2) 
,\\
&|w_0|^2=\nu^2\Delta(\gamma\nu-\varepsilon|x|^2) 
,\\
&-\overline z_0\cdot w_0=\nu^2\Delta(\beta\nu+\varepsilon\overline yx)
.
\end{split}
\end{equation*}
In fact, the first line here is implicit in \eqref{u2eq}, the second line is a simple analogue, while the third line is implicit in \eqref{uveq}. We construct the second column vector in the form $a_2:=(w_0/\delta_0,z_0/\delta_0)^T$ where $\delta_0\in \mathcal{O}_K$ divides both $z_0$ and $w_0$ and $|\delta_0|^2=\nu^2\Delta$.

Since $K$ has class number one, $\mathcal{O}_K$ has unique factorization. For each rational integral prime divisor $p$ of $\nu^2\Delta$ we compare the exponents of the primes above $p$ in both sides. Assume $\nu_p(\nu^2\Delta)=r$, where $\nu_p$ stands for the $p$-adic valuation.
\begin{enumerate}
\item If there is a single prime $\pi\in R$ which divides $p$, then $p=\pi^s$ with $s=1$ or $2$. Note that if $s=1$ then $2\mid r$. Set $\delta_p=\pi^{rs/2}$. As $|z_0|^2$ and $|w_0|^2$ are both divisible by $p^r$, we have $\delta_p\mid z_0$ and $\delta_p\mid w_0$.
\item If $p$ splits, 
write $p=\pi\overline\pi$ such that $\overline{\pi}$ is not a unit multiple of $\pi$.
Write $\nu_{\pi}(w_0):=\rho_w$, $\nu_{\overline{\pi}}(w_0):=\sigma_w$, $\nu_{\pi}(z_0):=\rho_z$ and $\nu_{\overline{\pi}}(z_0):=\sigma_z$, where $\nu_{\pi}$ and $\nu_{\overline{\pi}}$ stand for the $\pi$-adic and $\overline{\pi}$-adic valuation, respectively. Then by the above equations we have $\rho_w+\sigma_w\geq r$, $\rho_z+\sigma_z\geq r$, $\rho_w+\sigma_z\geq r$ and $\rho_z+\sigma_w\geq r$. Set $\rho=\min(\rho_w,\rho_z)$, $\sigma=r-\rho$ and $\delta_p=\pi^\rho\overline\pi^\sigma$. Then $\delta_p\mid z_0$ and $\delta_p\mid w_0$.
\end{enumerate}
Setting then
\[\delta_0:=\prod_{p|\nu^2\Delta}\delta_p,\]
we see that $\delta_0\in \mathcal{O}_K$ and $|\delta_0|^2=\nu^2 \Delta$. Writing $\delta:=\delta_0/\nu$, this implies that $\delta\in\frac{1}{\nu} R$ and $|\delta|^2=\Delta$. The proof is then complete by Lemma~\ref{perpconverse}.
\end{enumerate}
The proof is complete.
\end{proof}

Motivated by Proposition~\ref{integralfactorizationthm}(2), we introduce the following.
\begin{definition}
We say that the order $S$ defined above \emph{has enough divisors} if
the following holds. Whenever $|y|^2$ divides $|t|^2$ for some $y,t\in S$, then there exists a (left) divisor $u\in S$ of $t$ such that $|u|^2=|y|^2$. This property depends on both $K$ and $\varepsilon$.
\end{definition}

\begin{corollary}\label{enoughdivcorollary}
Assume that $S$ has enough divisors. Then for any hermitian form $Q$ of discriminant $\mu=\Delta\varepsilon$ (as in (\ref{normfree})) with matrix $\left(\begin{smallmatrix}\alpha & \beta \\ \overline\beta & \gamma\end{smallmatrix}\right)\in R^{2\times 2}$ and for any $\lambda=\nu\Delta$ with $\nu\in\ZZ$ and $\delta\in\frac{1}{\nu} R$ with $|\delta|^2=\Delta$, the following are equivalent:
\begin{enumerate}
\item There exists an integral $Q$-orthobalanced basis of norm $\lambda$ and type $\delta$.
\item $\alpha\nu
=|y|^2$ for some $y\in S$.
\end{enumerate}
\end{corollary}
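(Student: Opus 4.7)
The plan is to reduce the corollary directly to Proposition~\ref{integralfactorizationthm}(2), with all the work being absorbed in a short norm computation and a single application of the ``enough divisors'' hypothesis.

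For the direction (1)$\Rightarrow$(2), I would argue that if an integral $Q$-orthobalanced basis of norm $\lambda$ and type $\delta$ exists, then by Proposition~\ref{integralfactorizationthm}(2) there exists $u\in S$ with $|u|^2=\alpha\nu$, so taking $y:=u$ does the job. This direction is immediate and requires no new input.

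The direction (2)$\Rightarrow$(1) is the content-carrying one. I would set
\[
t:=\nu(\beta+\delta\sqrt{\varepsilon}j)\in L.
\]
First I would check that $t\in S$: we have $\nu\beta\in R$ since $\beta\in R$ and $\nu\in\ZZ$, while $\nu\delta\in R$ by the hypothesis $\delta\in\frac{1}{\nu}R$, so $t$ has both components in $R$. Next I would compute
\[
|t|^2=\nu^{2}\bigl(|\beta|^2+\varepsilon|\delta|^2\bigr)=\nu^{2}\bigl(|\beta|^2+\varepsilon\Delta\bigr)=\nu^{2}(|\beta|^2+\mu)=\nu^{2}\alpha\gamma,
\]
using the identity $\mu=\det M=\alpha\gamma-|\beta|^2$. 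In particular, $|t|^2=(\alpha\nu)(\gamma\nu)$, and since $\alpha,\gamma\in\ZZ$ (the diagonal entries of the hermitian matrix $M\in R^{2\times 2}$), this product lies in $\ZZ$.

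Now I would invoke the enough-divisors property: the assumption $\alpha\nu=|y|^2$ with $y\in S$, combined with the divisibility $|y|^2\mid |t|^2$ in $\ZZ$ computed above, yields a left divisor $u\in S$ of $t$ with $|u|^2=|y|^2=\alpha\nu$. Writing $t=uv$ with $v\in S$, the multiplicativity of the quaternion norm gives $|v|^2=|t|^2/|u|^2=\gamma\nu$. Thus $u,v\in S$ satisfy $uv=\nu(\beta+\delta\sqrt{\varepsilon}j)$, $|u|^2=\alpha\nu$ and $|v|^2=\gamma\nu$, which is exactly the hypothesis of Proposition~\ref{integralfactorizationthm}(2); this Proposition then furnishes the desired integral $Q$-orthobalanced basis of norm $\lambda$ and type $\delta$.

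There is really no serious obstacle: the only nontrivial input is the identity $|t|^2=\alpha\gamma\nu^{2}$, and everything else is bookkeeping plus a single application of the hypothesis on $S$. The point worth highlighting is that ``enough divisors'' is exactly the property which turns the naive necessary divisibility $|u|^2\mid|t|^2$ into a genuine factorization $t=uv$ inside $S$, and this is precisely what bridges Proposition~\ref{integralfactorizationthm}(2) with condition~(2) of the corollary.
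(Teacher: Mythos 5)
Your proof is correct and follows essentially the same route as the paper: reduce both directions to Proposition~\ref{integralfactorizationthm}(2), and for (2)$\Rightarrow$(1) compute $|\nu(\beta+\delta\sqrt{\varepsilon}j)|^2=\nu^2\alpha\gamma$ and apply the enough-divisors hypothesis to produce the factorization $t=uv$ with $|u|^2=\alpha\nu$, $|v|^2=\gamma\nu$. Your explicit checks that $t\in S$ and that the divisibility $\alpha\nu\mid\nu^2\alpha\gamma$ holds in $\ZZ$ are details the paper leaves implicit, and they are correct.
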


\begin{proof}
The direction $(1)\Rightarrow(2)$ immediately follows from Proposition~\ref{integralfactorizationthm}(2) even without the assumption that $S$ has enough divisors. As for the converse $(2)\Rightarrow(1)$, consider $\alpha\nu,\nu^2\alpha\gamma\in\ZZ$, which satisfy
\[
\alpha\nu=|y|^2,\qquad \nu^2\alpha\gamma=\nu(\beta+\delta\sqrt{\varepsilon}j)\overline{\nu}(\overline{\beta}-\overline{\delta}\overline{\sqrt{\varepsilon}}j)=|t|^2,
\]
for some $y,t\in S$. Since $S$ has enough divisors, we can factorize $\nu(\beta+\delta\sqrt{\varepsilon}j)$ to $uv$ satisfying $|u|^2=\alpha\nu$, $|v|^2=\gamma\nu$, and the respective implication of Proposition~\ref{integralfactorizationthm}~(2) applies.
\end{proof}

\begin{lemma}\label{enoughdivlemma}
If $R=\mathbb Z$ or $\mathbb Z[i]$ and $\varepsilon=1$ then 
\[S=\begin{cases}\mathbb Z[j], & \text{ if } R=\mathbb Z\\ \mathbb A=\{r+sj\mid r,s\in\mathbb Z[i]\}, & \text{ if }R=\mathbb Z[i].\end{cases}\]
In both cases $S$ has enough divisors.
\end{lemma}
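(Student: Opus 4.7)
The identification $S = \ZZ[j]$ (resp.~$S = \mathbb{A}$) for $R = \ZZ$ (resp.~$R = \ZZ[i]$) follows by substituting $\varepsilon = 1$ into the definition $S=\{r+s\sqrt{\varepsilon}j\mid r,s\in R\}$ and using $j^2 = -1$, $ij = k$ in $\mathbb{H}$. The content of the lemma is the ``enough divisors'' property, which I would prove separately in the two cases.

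For $S = \ZZ[j]$, this ring is a Euclidean domain isomorphic to the Gaussian integers, so my plan is to build $u$ prime by prime. Given $y,t\in \ZZ[j]$ with $|y|^2\mid |t|^2$ in $\ZZ$, I split according to the decomposition of each rational prime $p \mid |y|^2$ in $\ZZ[j]$. For split primes ($p \equiv 1 \pmod 4$), $p = \pi\bar\pi$ and the inequality $\nu_\pi(y)+\nu_{\bar\pi}(y) \leq \nu_\pi(t)+\nu_{\bar\pi}(t)$ (which is exactly $\nu_p(|y|^2)\le\nu_p(|t|^2)$) allows me to distribute the required total between the $\pi$- and $\bar\pi$-parts of $t$; for inert primes ($p \equiv 3 \pmod 4$), $\nu_p(|\cdot|^2)$ is automatically even and taking $\nu_p(y)$ copies of $p$ out of $t$ suffices; for the ramified $p = 2$ with $2 = -j(1+j)^2$, one takes the appropriate power of $1+j$ from $t$. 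The product of these local choices is the required divisor $u$.

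For $S = \mathbb{A}$, the argument is more delicate because $\mathbb{A}$ is not a maximal order in $\mathbb{H}_{\QQ}$. By induction on the number of prime factors of $|y|^2$ (successively peeling off one prime at a time), it suffices to show: for every rational prime $p\mid N(t)$ there is a left divisor $u\in\mathbb{A}$ of $t$ with $N(u)=p$. For odd $p$, the Lipschitz and Hurwitz orders agree $p$-adically and both localize to a maximal order of the split algebra $\mathbb{H}_{\QQ}\otimes\QQ_p$, so a left divisor of norm $p$ exists locally; a global Lipschitz lift is then obtained from the Hurwitz factorization (the Hurwitz order being a noncommutative PID with enough divisors) and, if necessary, right-multiplying by $\omega$ or $\omega^2$ (where $\omega = \tfrac{1+i+j+k}{2}$), which by a direct coordinate-parity calculation always brings a half-integer Hurwitz element into $\mathbb{A}$. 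For $p = 2$ I would argue directly by parity: since $2\mid N(t)=t_0^2+t_1^2+t_2^2+t_3^2$ forces the number of odd coordinates of $t = t_0 + t_1 i + t_2 j + t_3 k$ to be $0$, $2$, or $4$, I take (respectively) any of the six Lipschitz norm-$2$ elements; the element $u := e_a + e_b$ where $\{a,b\}$ indexes the odd positions (with $e_0 = 1$, $e_1 = i$, $e_2 = j$, $e_3 = k$); or $u := 1 + i$. In each case a short computation shows $\bar u t \in 2\mathbb{A}$, so $u$ is a left divisor of $t$ in $\mathbb{A}$.

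The main obstacle is the $p = 2$ case: the non-maximality of $\mathbb{A}$ at $2$ forces an ad-hoc analysis matching each of the $8$ even-weight parity classes of $t \bmod 2$ with a suitable Lipschitz norm-$2$ left divisor. A secondary, interconnected issue is the odd-prime Hurwitz-to-Lipschitz adjustment step, which must bring both factors of the Hurwitz factorization into $\mathbb{A}$ simultaneously via a single multiplication by $\omega$ or $\omega^2$; this coordinated parity check is what makes the step genuinely nontrivial.
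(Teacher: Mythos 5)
Your strategy is essentially the paper's: identify $S$, handle $\ZZ[j]$ by a prime-by-prime splitting argument (the paper gets this from Lemma~\ref{quotientisnorm}), and for $\mathbb A$ reduce by induction to $|y|^2=p$ prime, treat $p=2$ by an explicit parity analysis, and treat odd $p$ by producing a norm-$p$ left divisor in the Hurwitz order $S'$ and then moving it into $\mathbb A$ by a Hurwitz unit. Your $p=2$ recipe is correct and is a reorganized version of the paper's computation, and your claim that right multiplication by $\omega$ or $\omega^2$ always repairs a half-integer $u'$ is also correct (for $u'$ with all half-odd coordinates the four coordinates of $4\,u'\omega$ are congruent modulo $4$, so either $u'\omega\in\mathbb A$ or $u'\omega^2=u'\omega-u'\in\mathbb A$); the paper instead uses the sign-adapted unit $\overline\omega$ with $(u'+\omega)/2\in S$. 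One point you assert rather than prove is the existence of a Hurwitz left divisor of norm exactly $p$: the paper derives it from the principal right ideal property of $S'$ applied to $pS'+tS'$, after first translating $t$ by a multiple of $p$ so that $|t|^2<p^2$, which forces the generator's norm to be $p$ rather than $1$ or $p^2$; citing Conway--Smith-type factorization results here is acceptable, but the step deserves an explicit reference or argument.

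The genuine gap is the point you flag but leave open: you must also show that the complementary factor lies in $\mathbb A$, since ``left divisor $u\in S$ of $t$'' means $t=uv$ with $v\in S$, and this is exactly what the application (Corollary~\ref{enoughdivcorollary} via Proposition~\ref{integralfactorizationthm}(2)) needs. Without it the odd-$p$ case of your proof is incomplete. Moreover, your worry that a single unit must fix both factors ``simultaneously'' dissolves: no coordination is needed. Once $u\in\mathbb A$ with $|u|^2=p$ odd is secured, $u$ has an odd number of odd coordinates, and reducing modulo $2$ one checks that $q(1+i+j+k)\equiv(\text{sum of coordinates of }q)(1+i+j+k)$, so the product of $u$ with any all-half-odd Hurwitz quaternion is again all-half-odd; hence if $v=u^{-1}t$ were in $S'\setminus\mathbb A$, then $t=uv\in S'\setminus\mathbb A$, contradicting $t\in\mathbb A$. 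This one-line parity observation is precisely how the paper closes the argument, and adding it makes your proof complete.
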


\begin{proof}
It is easy to verify that $S$ is as in the statement. We are left with checking that the condition in the definition of ``$S$ has enough divisors'' holds. Assume hence that $|y|^2$ 
divides $|t|^2$ for some $y,t\in S$, our goal is to factorize $t$ to $uv$ such that $|u|^2=|y|^2$.

The case $S=\ZZ[j]$ follows from Lemma~\ref{quotientisnorm}, hence we focus on $S=\mathbb{A}$. Since the case $|y|^2=1$ is trivial, a simple induction argument shows that it suffices to treat the situation when $|y|^2=p$ is a rational prime number. We go by cases.

Case 1: $p=2$. Then for $t=t_{\RR}+t_i i+t_j j+ t_k k$ (with $t_{\RR},t_i,t_j,t_k\in\ZZ$), we have that $2\mid |t|^2=t_{\RR}^2+t_i^2+t_j^2+t_k^2$, hence $t_{\RR}$ has the same parity as odd many among $t_i,t_j,t_k$. If, say, $t_{\RR}\equiv t_i\bmod 2$, then $t_j\equiv t_k \bmod 2$, and one can easily check that
\[(1+i)^{-1}t=\frac{t_{\RR}+t_i}{2}+\frac{-t_{\RR}+t_i}{2}i+\frac{t_j+t_k}{2}j+\frac{-t_j+t_k}{2}k\in S,\]
hence $u=1+i$ and $v=(1+i)^{-1}t$ give an appropriate factorization admitting $|u|^2=2$. If $t_{\RR}\equiv t_j \bmod 2$, then similarly $u=1+j$, $v=(1+j)^{-1}t$, while if $t_{\RR}\equiv t_k \bmod 2$, then $u=1+k$, $v=(1+k)^{-1}t$ are appropriate.

Case 2: $p\neq 2$ and $|t|^2<p^2$. Let $S':=S\cup(S+\frac{1+i+j+k}{2})$ be the order of Hurwitz quaternions. It is known that every nonzero right ideal of $S'$ is a principal right ideal (in fact, there is a ``non-commutative euclidean division'', see e.g. \cite[Section~5.1]{ConwaySmith}). This implies that, for some $u'\in S'$,
\[
pS'+tS'=u'S'.
\]
Then $p=u'c$ for some $c\in S'$, hence $|u'|^2\mid p^2$ (in $\ZZ$). Similarly, $t=u'v'$ for some $v'\in S'$, hence $|u'|^2\mid |t|^2$ (in $\ZZ$). Recalling we are under the assumption $|t|^2<p^2$, these altogether imply that $|u'|^2\in\{1,p\}$. On the other hand, we have that
$u'=pa+tb$ with some $a,b\in S'$, hence
\[
|u'|^2=p^2|a|^2+p(a\overline{tb}+tb\overline{a})+|t|^2|b|^2.
\]
By assumption, $p\mid |t|^2$, hence $|u'|^2=p$.

If $u'\in S$, then we set $u:=u',v:=v'$.

If $u'\in S'\setminus S$, then choose $\omega=\frac{\pm 1 \pm i \pm j \pm k}{2}$ in such a way that $(u'+\omega)/2\in S$ (i.e.~$u'+\omega$ has even integer coordinates in the basis $1,i,j,k$). Let then
\[
u:=u'\overline{\omega}=(u'+\omega)\overline{\omega}-1=\frac{u'+\omega}{2}\cdot 2\overline{\omega}-1.
\]
Then on the one hand, $|u|^2=|u'|^2=p$, while on the other hand, $u\in S$. Correspondingly, let $v:=\omega v'$.

In either case, $uv=u'v'=t$ is clear. Further, $u$ has odd many odd coordinates (in the standard basis $1,i,j,k$), since $|u|^2=p$ is odd. This implies that $v\notin S'\setminus S$ (for if $v\in S'\setminus S$, then $t=uv\in S'\setminus S$, a contradiction). Then $uv=t$, $u,v\in S$ and $|u|^2=p$.

Case 3: $p\neq 2$ and $|t|^2\geq p$. Choose $m\in S$ such that
\[
t':=t-pm\in \left(-\frac{p}{2},\frac{p}{2}\right) + \left(-\frac{p}{2},\frac{p}{2}\right)i+ \left(-\frac{p}{2},\frac{p}{2}\right)j+\left(-\frac{p}{2},\frac{p}{2}\right)k,
\]
in particular, $|t'|^2<p^2$. Then \[|t'|^2=|t-pm|^2=(t-pm)(\overline t-p\overline m)\equiv |t|^2\equiv 0 \bmod p.\] We know from Case 2 that $t'=uv$ with some $u,v\in S$, $|u|^2=u\overline u=p$. Then $t=uv+pm=u(v+\overline{u}m)$ is an appropriate factorization, since $v+\overline{u}m\in S$.
\end{proof}

\begin{proof}[Proof of Proposition~\ref{perpprop}]
Assume $(a_1,a_2)\in R^2\times R^2$ is a $Q$-orthoregular basis of norm $\lambda=\nu\Delta$ as in the statement of Proposition~\ref{perpprop}. Then applying Proposition~\ref{perpprop2}, we see that $a_2=(Ma_1)_{\perp}/\delta$ with some $\delta\in K$, $|\delta|^2=\Delta$. Also, $\delta\in\frac{1}{\nu} R$ by Proposition~\ref{integralfactorizationthm}(1).
\end{proof}

\begin{proof}[Proof of Proposition~\ref{Orthoregbasisthm}]
Assume $Q,\lambda,\nu,\Delta,\delta$ are as in the statement of Proposition~\ref{Orthoregbasisthm}. Applying Lemma~\ref{enoughdivlemma}, we see that the corresponding orders $\ZZ[j]$ or $\mathbb{A}$ have enough divisors, and then Corollary~\ref{enoughdivcorollary} yields Proposition~\ref{Orthoregbasisthm}(1)-(2). The statement Proposition~\ref{Orthoregbasisthm}(3) is immediate from Proposition~\ref{integralfactorizationthm}(3) by recalling that the class number of both $\QQ$ and $\QQ(i)$ are $1$.
\end{proof}

\begin{example} We exhibit some examples to justify some of the above notions and conditions.
\begin{enumerate}
\item $S$ does not have enough divisors in general, for example if $K=\mathbb Q$ and $\varepsilon=17$, then $S=\mathbb Z[\sqrt{17}j]$, $\alpha=21$ is an absolute square (for example $|2+\sqrt{17}j|^2=21$) and $t=4\pm5\sqrt{17}j$ is of absolute square $21^2$, but has no divisor of absolute square $21$.
\item The fact that $\alpha\nu$ is an absolute square is not sufficient for the existence of an integral $Q$-orthobalanced basis of type $\delta$ and norm $\lambda$ in general. For example set $K=\mathbb Q$ and consider the quadratic form $Q$ corresponding to the matrix $M=\left(\begin{smallmatrix}21 & 4 \\ 4 & 21\end{smallmatrix}\right)$ with discriminant $\mu=425$. Then $\Delta=25$ and $\varepsilon=17$ and there is no integral $Q$-orthobalanced basis of length $\lambda=21$ (with $\nu=1$), as the only possible values for $\delta$ are $\pm5$, but $\beta+\delta j=4\pm5\sqrt{17}j$ have no divisor of absolute square $21$.
\item One cannot extend an integral vector to a integral $Q$-orthobalanced basis in general. Consider the hermitian form $Q$ corresponding to the matrix $\left(\begin{smallmatrix}5 & 2 \\ 2 & 5\end{smallmatrix}\right)$ over $K=\mathbb Q(\sqrt{-17})$. Then as $\mu=21$ is an absolute square in $R=\mathcal O_K$, we have $\Delta=21$, $\varepsilon=1$. Let $a_1:=(\sqrt{17}i,2)^T\in R^2$ with $\lambda=Q(a_1)=105$, thus $\nu=5$. With the notation used in the proof of Proposition \ref{integralfactorizationthm}(3) we have $z_0=5(4-5\sqrt{17}i)$, which -- as an easy computation shows -- has no divisor $y$ with $|y|^2=\nu^2\Delta=25\cdot 21$, hence $a_1$ cannot be extended to an integral $Q$-orthobalanced basis.
\end{enumerate}
\end{example}

Later we will need the following

\begin{lemma}\label{squaredetlem}
Let $R=\mathbb Z$ and $Q$ be a positive definite hermitian form with matrix
\[
M:=\begin{pmatrix} \alpha & \beta \\ \overline\beta & \gamma \end{pmatrix} \in\ZZ^{2\times 2}
\]
as before, and assume that $\det(M)=\Delta$ is a square. Then the following are equivalent:
\begin{enumerate}
\item $\alpha$ is the sum of two squares,
\item $Q(x)$ is the sum of two squares for all $x\in\mathbb Z^2$,
\item $Q(x)$ is the sum of two squares for some 
$x\in\mathbb Z^2\setminus\{(0,0)^T\}$.
\end{enumerate}
Otherwise, there exists a prime $q=4k+3\in\mathbb Z$ such that for all $x\in\mathbb Z^2\setminus\{(0,0)^T\}$ the exponent of  $q$ in $Q(x)$ is odd.
\end{lemma}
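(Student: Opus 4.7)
The whole argument rests on identity \eqref{Qsquare}, which I would specialise to the present setting. Here $R=\mathbb Z$, so $\beta\in\mathbb Z$, and the hypothesis $\det M=\Delta\in\mathbb Z$ being a square means $\mu=\det M=d^2$ for some $d\in\mathbb Z_{>0}$ (in particular $\varepsilon=1$ in the normfree decomposition \eqref{normfree}). Hence \eqref{Qsquare} reads
\[
\alpha Q(x)=(\alpha x_1+\beta x_2)^2+(d x_2)^2,\qquad x=(x_1,x_2)^T\in\mathbb Z^2,
\]
which presents $\alpha Q(x)$ as a sum of two integer squares for every $x\in\mathbb Z^2$. Also, $\alpha=Q((1,0)^T)>0$ because $Q$ is positive definite.

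The plan is then to combine this identity with the classical fact that a positive integer $n$ is a sum of two squares iff $\nu_q(n)$ is even for every rational prime $q\equiv 3\bmod 4$. For (1)$\Rightarrow$(2), assume $\alpha$ is a sum of two squares. For any nonzero $x\in\mathbb Z^2$, the identity shows $\nu_q(\alpha Q(x))$ is even for all $q\equiv 3\bmod 4$, and $\nu_q(\alpha)$ is even by assumption, so $\nu_q(Q(x))$ is even as well; hence $Q(x)$ is a sum of two squares. The implication (2)$\Rightarrow$(3) is trivial. For (3)$\Rightarrow$(1), pick a nonzero $x_0$ with $Q(x_0)$ a sum of two squares; then by the identity $\alpha Q(x_0)$ is a sum of two squares, and $Q(x_0)>0$ is too by assumption, so for each $q\equiv 3\bmod 4$ we have $\nu_q(\alpha)=\nu_q(\alpha Q(x_0))-\nu_q(Q(x_0))$ is even, whence $\alpha$ is a sum of two squares.

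For the final clause, assume (1) fails and pick a prime $q\equiv 3\bmod 4$ with $\nu_q(\alpha)$ odd. For any nonzero $x\in\mathbb Z^2$ we have $Q(x)>0$ and $\alpha Q(x)$ is still a sum of two squares by the identity, so $\nu_q(\alpha Q(x))=\nu_q(\alpha)+\nu_q(Q(x))$ is even, forcing $\nu_q(Q(x))$ to be odd. I do not expect any real obstacle: the only points of care are to confirm $\varepsilon=1$ (so that \eqref{Qsquare} actually yields a sum of two integer squares rather than a sum of a square and $\varepsilon$ times a square) and to note $d=\sqrt{\Delta}\in\mathbb Z$ so that $dx_2\in\mathbb Z$; from there everything reduces to the parity of $q$-adic valuations.
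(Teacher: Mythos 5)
Your proposal is correct and follows essentially the same route as the paper: both hinge on the identity \eqref{Qsquare} with $\mu=\Delta$ a square to write $\alpha Q(x)$ as a sum of two integer squares, and then deduce all three equivalences plus the final clause by a divisibility argument. The only cosmetic difference is that you invoke the classical valuation criterion (a positive integer is a sum of two squares iff every prime $q\equiv 3\bmod 4$ occurs to an even exponent) directly, whereas the paper packages the same fact as Lemma~\ref{quotientisnorm} applied to $K=\QQ(i)$; these are interchangeable.
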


\begin{proof}
As $M$ is positive definite, we have $\alpha>0$. By (\ref{Qsquare}) and the fact that $\mu=\Delta$ is a square, we have that $\alpha Q(x)$ is a sum of two squares for any $x\in\mathbb Z^2$. Also Lemma~\ref{quotientisnorm} applied to $K:=\QQ(i)$ gives that the quotient, provided it is an integer, of two rational integers, which individually are the sum of two squares, is again the the sum of two squares. Recalling these, all implications are simple.

$(1) \Rightarrow (2)$: we have that $Q(x)=\alpha Q(x)/\alpha,$
the right-hand side is the sum of two squares, so is the left-hand side.

$(2) \Rightarrow (3)$: obvious.

$(3) \Rightarrow (1)$: since $Q$ is positive definite, $Q(x)\neq 0$ for the specified $x\neq (0,0)^T$ for which $Q(x)$ is the sum of two squares. Then $\alpha=\alpha Q(x)/Q(x),$ the right-hand side is the sum of two squares, so is the left-hand side.

If $\alpha$ is not a sum of two squares, then it has a prime divisor $q=4k+3$ which has odd exponent in $\alpha$. Then again by the fact that $\alpha Q(x)$ is a sum of two squares we have that the exponent of $q$ in $Q(x)$ is odd for all $x\in\mathbb Z^2\setminus\{(0,0)^T\}$.
\end{proof}

\section{Extension of icubes}

From now on let $K=\mathbb Q$ or $\mathbb Q(i)$ and $R=\mathbb Z$ or $\mathbb Z[i]$ again.

Let $A_0=(a_1|a_2|\dots|a_k)\in R^{n\times k}$ be an icube of norm $\lambda$. We will need the following:

\begin{proposition}\label{discQlemma}~
\begin{enumerate}
\item The $R$-module
\[\Lambda=\{w\in R^n \mid a_j^*w=0\text{ for }1\leq j\leq k\}\]
is free of rank $n-k$.
\item If $Q$ is the restriction of the standard inner product to $\Lambda$, then
\[\disc(Q)=\frac{\lambda^k}{|d_k(A_0)|^2},\]
where $\disc(Q)$ is the determinant of the Gram matrix of $Q$ with respect to a free basis as in (1) and $d_k(A_0)$ is the $k$-th determinantal divisor of $A_0$.
\end{enumerate}
\end{proposition}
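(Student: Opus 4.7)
The plan is to express $\disc(Q)$ in terms of $|\det B|^2$ for a square extension $B$ of $A_0$, then compute $|\det B|^2$ via a quotient identification that uses the Smith normal form of $A_0$.

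For part (1), $\Lambda = \ker(A_0^* : R^n \to R^k)$ is an $R$-submodule of $R^n$, hence free since $R$ is a PID. The image $A_0^* R^n$ contains $A_0^* A_0 R^k = \lambda R^k$ (rank $k$) and sits inside $R^k$, so has rank exactly $k$; therefore $\Lambda$ has rank $n-k$.

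For part (2), I would fix an $R$-basis of $\Lambda$ as the columns of $E \in R^{n \times (n-k)}$ and set $B := (A_0 | E) \in R^{n \times n}$. The orthogonality $A_0^* E = 0$ yields
\[
B^* B = \begin{pmatrix} \lambda I_k & 0 \\ 0 & G \end{pmatrix},
\]
with $G := E^* E$ the Gram matrix of $Q$, so $|\det B|^2 = \lambda^k \disc(Q)$, and it remains to prove $|\det B|^2 = \lambda^{2k}/|d_k(A_0)|^2$. Note that $BR^n = A_0 R^k \oplus \Lambda$ (direct sum by orthogonality under the positive-definite form), and that $A_0^*$ descends to an isomorphism $R^n / BR^n \cong A_0^* R^n / \lambda R^k$ of $R$-modules, since $\ker A_0^* = \Lambda \subseteq BR^n$ and $A_0^*(A_0 R^k) = \lambda R^k$. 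Using the Smith normal form $A_0 = U_1 D_1 V$, with $D_1 = \diag(\alpha_1,\dotsc,\alpha_k)$, $V \in \GL_k(R)$, and $U_1$ the first $k$ columns of some $U \in \GL_n(R)$, one verifies that $U_1^* : R^n \to R^k$ is $R$-surjective (a right inverse is $W_1^*$, where $W_1$ is the top $k$ rows of $U^{-1}$); hence $A_0^* R^n = V^* D_1^* R^k$ has Gram determinant $|d_k(A_0)|^2$, whereas $\lambda R^k$ has Gram determinant $\lambda^{2k}$. The identity $\disc M = |\det T|^2 \disc L$ for nested equal-rank $R$-lattices $M = L\cdot T \subseteq L$---uniform in $R$ under the complex absolute value---applied to both $BR^n \subseteq R^n$ and $\lambda R^k \subseteq A_0^* R^n$, together with the isomorphism of quotients above, yields $|\det B|^2 = \lambda^{2k}/|d_k(A_0)|^2$ as needed.

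The main technical nuisance is only bookkeeping: the cardinality-theoretic notion of index differs between $R = \ZZ$ and $R = \ZZ[i]$, but working uniformly with discriminants (whose ratios yield squares of the complex absolute value of the transition determinant in both cases) sidesteps the issue. The other essential but routine point is the $R$-surjectivity of $U_1^*$, which makes the clean identification $A_0^* R^n = V^* D_1^* R^k$ possible.
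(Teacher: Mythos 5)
Your proposal is correct and follows essentially the same route as the paper: part (1) is identical, and for part (2) you use the same matrix $B=(A_0|E)$ with $B^*B=\left(\begin{smallmatrix}\lambda I&0\\0&G\end{smallmatrix}\right)$, hence $|\det B|^2=\lambda^k\disc(Q)$, together with the same key quotient isomorphism $R^n/BR^n\cong A_0^*R^n/\lambda R^k$ induced by $A_0^*$. The only divergence is local bookkeeping: the paper evaluates the index ideal $[R^k:A_0^*R^n]_R=(\overline{d_k(A_0)})$ directly by expressing the determinant of a generating set of the image as a combination of $k\times k$ minors of $A_0^*$ and then invokes multiplicativity of the $R$-index, whereas you identify $A_0^*R^n=V^*D_1^*R^k$ from the Smith normal form $A_0=U_1D_1V$ (via surjectivity of $U_1^*$) and compare discriminants, relying on $\prod_j\alpha_j\sim d_k(A_0)$ and on the standard fact that isomorphic quotients of equal-rank free modules have the same index ideal --- an equivalent and equally valid way to close the computation.
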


The case $R=\mathbb Z$, $k=1$ and $a_1\in R^n$ is primitive ($d_1(A_0)=1$) is \cite[Proposition~2]{Horvath}.

\begin{proof}~
\begin{enumerate}
\item As $R$ is a principal ideal domain, $\Lambda$ is free. On the other hand, we have
\[\dim(\Lambda\otimes K)=\dim(\mathrm{Span}(a_1,a_2,\dots,a_k)^\perp)=n-k,\]
so $\Lambda$ has rank $n-k$.
\item Fix a basis $b_{k+1},b_{k+2},\dots,b_n$ of $\Lambda$ and let
\[\Lambda'=\langle a_1,a_2,\dots,a_k,b_{k+1},\dots,b_n\rangle\leq R^n,\]
a free module of rank $n$ and put $B:=(a_1|a_2|\dots|a_k|b_{k+1}|\dots |b_n)$. Then $B^*B=\left(\begin{smallmatrix} \lambda I & \\ & M\end{smallmatrix}\right)$, where $I\in R^{k \times k}$ is the identity matrix and $M=M^*\in R^{(n-k)\times (n-k)}$ is the matrix of $Q$ in the basis $b_{k+1},b_{k+2},\dots, b_n$. Thus
\begin{equation}\label{LambdaDisc}
\disc(Q)=\det(M)=|\det(B)|^2/\lambda^k.
\end{equation}

The $R$-index $[R^n:\Lambda']_R$ is defined (\cite{Conrad}, Definition 5.14) as the $R$-cardinality (or characteristic ideal) of the quotient $R^n/\Lambda'$ and equals (\cite{Conrad}, Theorem 5.22)
\[[R^n:\Lambda']_R=(\det(B)).
\]

Consider the map
\[\varphi:R^n\to R^k, \quad w\mapsto A_0^*w=(a_1^*w,a_2^*w,\dots,a_k^*w)^T.\]
Its kernel is $\Lambda$ by definition and its image $\varphi(R^n)\leq R^k$ is the $R$-module generated by the images $\varphi(e_i)=A_0^*e_i$ of the standard basis vectors for $1\leq i\leq n$. As $\varphi(a_j)=\lambda e_j$ for $1\leq j\leq k$ we have
\[\varphi(\Lambda')=(\lambda R)^k\leq\varphi(R^n).\]
Thus $\varphi$ restricts to an isomorphism $R^n/\Lambda'\to\varphi(R^n)/(\lambda R)^k$. Using this and \cite{Conrad}, Theorem 5.18 we get
\begin{equation}\label{Rindex}
(\lambda^k)=[R^k:(\lambda R)^k]_R=[R^k:\varphi(R^n)]_R[(\varphi(R^n):(\lambda R)^k]_R=[R^k:\varphi(R^n)]_R[R^n:\Lambda']_R.
\end{equation}

We claim $[R^k:\varphi(R^n)]_R=(\overline{d_k(A_0)})$. Indeed if $b'_1,\dots, b'_k\in R^k$ are free generators of $\varphi(R^n)$ then they are $R$-linear combinations of the images $\varphi(e_i)=A_0^*e_i$ of the standard basis vectors $1\leq i\leq n$. Using \cite{Conrad}, Theorem 5.22 again we get $[R^k:\varphi(R^n)]_R=\det(b'_1|\dots|b'_k)$ and by the multilinearity of the determinant it is an $R$-linear combination of $k\times k$ minors of $A_0^*$. Hence $(d_k(A_0^*))=(\overline{d_k(A_0)})$ divides 
$[R^k:\varphi(R^n)]_R$.

On the other hand any column $A_0^*e_i$ of $A_0^*$ can be expressed as an $R$-linear combination of the $b'_j$-s, so the determinant of any $k\times k$ minor is divisible by $\det(b'_1|b'_2|\dots|b'_k)$, thus
$[R^k:\varphi(R^n)]_R=(\det(b'_1|b'_2|\dots|b'_k))$ also divides $(d_k(A_0^*))=(\overline{d_k(A_0)})$ and our claim follows.

Plugging this into \eqref{Rindex} we obtain $[R^n:\Lambda']_R=(\lambda^k/\overline{d_k(A_0)})$ whence
\begin{equation}\label{detBuptounit}
\det(B)=\omega\lambda^k/\overline{d_k(A_0)} 
\end{equation}
for some unit $\omega\in R$. Using \eqref{LambdaDisc} we finally deduce
\begin{equation}\label{discQform}
\disc(Q)=\frac{|\omega\lambda^k/\overline{d_k(A_0)}|^2}{\lambda^k}=\frac{\lambda^k}{|d_k(A_0)|^2}.
\end{equation}
\end{enumerate}
\end{proof}

\begin{corollary}
We have that $d_k(A_0)\mid \lambda^k$ (in $R$).
\end{corollary}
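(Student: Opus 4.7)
The plan is to read the corollary off the formula \eqref{discQform} established in the proof of Proposition~\ref{discQlemma}(2): that is, we already have in hand the identity
\[
\disc(Q) \cdot |d_k(A_0)|^2 = \lambda^k.
\]
So the corollary will follow once we argue that $\disc(Q)$ is a (rational) integer and then interpret the above equation inside $R$.

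First I would verify that $\disc(Q)\in\ZZ_{\geq 1}$. Indeed, $Q$ is the restriction of the standard positive definite Hermitian inner product on $R^n$ to the free $R$-submodule $\Lambda$, and its Gram matrix in a free $R$-basis $b_{k+1},\dots,b_n$ has entries $b_i^* b_j \in R$. This Gram matrix is Hermitian, so its determinant is invariant under complex conjugation and therefore lies in $R\cap\RR$; for $R=\ZZ$ or $\ZZ[i]$ this means it lies in $\ZZ$. Positive definiteness then gives $\disc(Q)\geq 1$.

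Next I would rewrite \eqref{discQform} as $|d_k(A_0)|^2 \cdot \disc(Q) = \lambda^k$, which says that $|d_k(A_0)|^2 \mid \lambda^k$ as rational integers, and hence also as elements of $R$. Since $|d_k(A_0)|^2 = d_k(A_0)\,\overline{d_k(A_0)}$ in $R$, the element $d_k(A_0)$ is a factor of $|d_k(A_0)|^2$ in $R$, and by transitivity of divisibility
\[
d_k(A_0) \mid d_k(A_0)\,\overline{d_k(A_0)} \mid \lambda^k,
\]
which is exactly the claim.

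There is no real obstacle here; the corollary is essentially a reformulation of the formula for $\disc(Q)$ together with the elementary fact that discriminants of integral positive definite Hermitian forms are positive integers. (As a sanity check, one can also derive the same divisibility directly from the Cauchy--Binet expansion $\lambda^k=\det(A_0^*A_0)=\sum_{I}|\det(A_0^I)|^2$, where each summand is divisible by $|d_k(A_0)|^2$; this avoids invoking the discriminant computation at the cost of repeating a small calculation.)
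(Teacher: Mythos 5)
Your argument is correct and is essentially the paper's intended one: the corollary is stated without proof as an immediate consequence of Proposition~\ref{discQlemma}(2), either via \eqref{detBuptounit} (which gives $\overline{d_k(A_0)}\mid\lambda^k$ in $R$ since $\det(B)\in R$) or, as you do, via \eqref{discQform} together with the integrality and positivity of $\disc(Q)$. Your version even yields the slightly stronger divisibility $|d_k(A_0)|^2\mid\lambda^k$, which of course implies the stated claim.
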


With the notation of the previous proof we also have

\begin{corollary}\label{crossproductcor}
Assume $A_0=(a_1|a_2|\dots|a_k)$ is a $k$-icube of norm $\lambda$ with $a_1$ primitive. Fix a basis $(b_{k+1},b_{k+2},\dots,b_n)$ of $\Lambda$ and set $B=(a_1|a_2|\dots|a_k|b_{k+1}|\dots|b_n)$. Let $a=(\alpha_1,\alpha_2,\dots,\alpha_n)^T$ be the cross product of $a_2,a_3,\dots,a_k,b_{k+1},\dots,b_n$, that is
\[\alpha_j=\overline{\det (B_{j,1})} ,\]
where $B_{j,1}$ is the cofactor we get by deleting the $j$th row and first column of $B$. Then $a=(\overline{\det(B)}/\lambda)\cdot a_1$ with $\overline{\det(B)}/\lambda\in R$.
\end{corollary}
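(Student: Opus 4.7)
The plan is to identify $a$ as (up to scalar) the unique vector in $K^n$ orthogonal to the $n-1$ columns $a_2,\dots,a_k,b_{k+1},\dots,b_n$ of $B$, and then to pin down the scalar using the primitivity of $a_1$ together with the fact that $a$ has integral entries.

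First I will verify $a^*v=0$ for each such column $v$. Replacing the first column of $B$ by $v$ produces a matrix with two identical columns and therefore vanishing determinant; expanding along the first column turns this vanishing into the identity $\sum_j \overline{\alpha_j}\,v_j = 0$ (with the appropriate sign convention absorbed in the definition of the cross product). Since $a_1$ is orthogonal to $a_2,\dots,a_k$ by the icube hypothesis and to $b_{k+1},\dots,b_n$ by the defining property of $\Lambda$, and since $B$ is nonsingular (its columns being $K$-linearly independent), the specified $n-1$ columns form a $K$-basis of $\{a_1\}^\perp\subset K^n$. Therefore $a\in K\cdot a_1$, so $a=c\,a_1$ for some $c\in K$. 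Taking the Hermitian inner product with $a_1$ on both sides gives $c\lambda = a_1^*a = \overline{\det(B)}$ by cofactor expansion of $\det(B)$ along its first column (with conjugation applied throughout), whence $c=\overline{\det(B)}/\lambda$.

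It remains to show $c\in R$. Each $\alpha_j$ is the conjugate of a determinant of an $R$-matrix, so $a\in R^n$. Combined with $a_1$ being primitive, this forces $c\in R$: the entries $(a_1)_j$ generate the unit ideal in $R$, so writing $1=\sum_j r_j(a_1)_j$ with $r_j\in R$ yields $c=\sum_j r_j\bigl(c\,(a_1)_j\bigr)\in R$. The main conceptual step is the observation that the cross product must lie in the one-dimensional subspace $K\cdot a_1$; after that, both the identification of $c$ and the integrality check are short. The only delicate point is the sign bookkeeping in the definition of the cross product, but any consistent convention yields the stated formula.
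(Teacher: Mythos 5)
Your proposal is correct and follows essentially the same route as the paper: the paper phrases the cofactor-expansion identities via the first row of $B^{-1}$ and the top-left entry of $B^{-1}B$, but this is the same argument as your direct expansions, with the scalar pinned down by pairing with $a_1$ and integrality from $a\in R^n$ plus primitivity of $a_1$. Your Bézout-style justification of $c\in R$ merely spells out the paper's one-line remark; no further changes are needed.
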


\begin{proof}
Note that the first row of $B^{-1}$ is $\frac{1}{\det (B)}a^*$. In particular, both $a$ and $a_1$ are orthogonal to each of the linearly independent vectors $a_2,a_3,\dots,a_k, b_{k+1},\dots,b_n$ by construction,
so we may write $a=\rho a_1$ for some $\rho\in K$. On the other hand, computing the top left entry of $B^{-1}B$ in two ways, we obtain
\begin{equation}\label{computerho}
1=\frac{1}{\det(B)}a^*a_1=\frac{\overline{\rho}}{\det(B)}a_1^*a_1=\frac{\overline{\rho}\lambda}{\det(B)},
\end{equation}
hence $\rho=\overline{\det(B)}/\lambda$. Finally note that $\rho\in R$ as $a_1$ is primitive and $a\in R^n$.
\end{proof}

\subsection{Dimension 3}~
\begin{proof}[Proof of Theorems~\ref{3Z}~and~\ref{3Zi}]
Let $A_0$ be a $k$-icube (with $1\leq k\leq 3$ in $R^2$) of norm $\lambda=|y|^2$ (with some $y\in R$). Our goal is to extend it to a $3$-icube.

Case 1: $k=1$, i.e.~$A_0=(a_1)$. First we assume that 
$a_1$ is primitive. 
Consider then $\Lambda:=\{w\in R^3\mid a_1^*w=0\}$, a free module of rank $2$ by Proposition~\ref{discQlemma}(1). Denoting by $e_1,e_2,e_3$ the standard generators of $R^3$ we note that $\Lambda$ intersects $Re_1+Re_2$ nontrivially. In particular, there is an element $0\neq b_2\in \Lambda$ whose $3$rd coordinate is zero. Dividing by the greatest common divisor of the entries of $b_2$ we may assume $b_2$ is primitive whence it can be extended by a vector $b_3$ to a basis of $\Lambda$.

Let $Q$ be the restriction of the standard inner product to $\Lambda$. By Proposition~\ref{discQlemma}(2), $\disc(Q)=\lambda$.

If $R=\ZZ[i]$, take some $\delta\in R$ such that $\lambda=|\delta|^2$, and apply Proposition~\ref{Orthoregbasisthm}(2) with $Q,\lambda,\nu:=1,\Delta:=\lambda,\delta$ to get an integral orthoregular basis $(a_2,a_3)\in R^2\times R^2$ of norm $\lambda$ in $\Lambda$. Then $(a_1|a_2|a_3)$ is a $3$-icube of norm $\lambda$, an extension of $A_0$.

If $R=\ZZ$, then write $M:=\left(\begin{smallmatrix}
\alpha & \beta \\ \overline{\beta} & \gamma
\end{smallmatrix}\right)$ for the Gram matrix of $Q$ in the basis $b_2,b_3$. Then $\alpha=Q(b_2)=b_2^*b_2$ is the sum of two squares, since $b_2$ has a zero coordinate. Then we apply Proposition~\ref{Orthoregbasisthm}(1) with $Q,\lambda,\nu:=1,\Delta:=\lambda,\delta$ to get an integral orthoregular basis $(a_2,a_3)\in R^2\times R^2$ of norm $\lambda$ in $\Lambda$. Then $(a_1|a_2|a_3)$ is a $3$-icube of norm $\lambda$, an extension of $A_0$.

If the entries of $a_1$ are not jointly coprime, say, their $\gcd$ is $\mu$, then the $1$-icube $a_1/\mu$ can be extended to a $3$-icube $((a_1/\mu)|a_2|a_3)$ of norm $\lambda/|\mu|^2$, as already proven. Therefore, $(a_1|(\mu a_2)|(\mu a_3))$ is an extension of $A_0$ to a $3$-icube of norm $\lambda$.

Case 2: $k=2$, i.e.~$A_0=(a_1|a_2)$. Consider again $\Lambda:=\{w\in R^3\mid a_1^*w=0\}$, and let $Q$ be the restriction of the standard inner product to $\Lambda$. Then $a_2\in \Lambda$ satisfies $Q(a_2)=\lambda$, hence by Proposition~\ref{Orthoregbasisthm}(3), it can be extended to a $Q$-orthoregular basis $(a_2,a_3)$ in $\Lambda$. Then $(a_1|a_2|a_3)$ is a $3$-icube extension of $A_0$.
\end{proof}

\subsection{Dimension 4} We start this section be proving Theorems~\ref{4Z}~and~\ref{4Zi}. We will rely on Propositions~\ref{d2yx}~and~\ref{sumsquared4}, which will be treated afterwards.
\begin{proof}[Proof of Theorems~\ref{4Z}~and~\ref{4Zi}]
We prove that any $k$-icube $A_0$ of norm $\lambda$ can be extended to a $(k+1)$-icube for $1\leq k\leq 3$.

If $k=1$ then let $A_0=(\alpha_1,\alpha_2,\alpha_3,\alpha_4)^T\in R^{4\times 1}$. Then the following is an icube:
\begin{equation}\label{{eq:1-icube-to-2-icube-in-Z[i]4}}
\begin{pmatrix}\alpha_1 & \alpha_2 & \alpha_3 & \alpha_4\\ -\overline{\alpha_2} & \overline{\alpha_1} & -\overline{\alpha_4} & \overline{\alpha_3}\end{pmatrix}^T
\end{equation}

If $k=2$ then consider the $R$-module $\Lambda$ and the binary hermitian form $Q$ as in Proposition~\ref{discQlemma}. By Proposition~\ref{discQlemma}(2) we have $\disc(Q)=\lambda^2/|d_2(A_0)|^2$. 

\begin{proposition}\label{d2yx}
For any $x,y\in\Lambda$ we have that $|d_2(A_0)|^2y^*x$ is divisible by $\lambda$.
\end{proposition}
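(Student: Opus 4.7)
The plan is to reduce the claim to the matrix statement $\lambda M^{-1}\in R^{2\times 2}$, where $M=C^*C$ is the Gram matrix of $Q$ in some $R$-basis $C=(b_3\mid b_4)\in R^{4\times 2}$ of $\Lambda$. For arbitrary $x=\xi_1 b_3+\xi_2 b_4$ and $y=\eta_1 b_3+\eta_2 b_4$ in $\Lambda$ with $\xi_j,\eta_j\in R$, the bilinear expansion $y^*x=\overline{\eta}^T M\xi$ combined with the identity $\lambda M^{-1}=|d_2(A_0)|^2\,\mathrm{adj}(M)/\lambda$ -- valid by Proposition~\ref{discQlemma} since $\det M=\lambda^2/|d_2(A_0)|^2$ -- converts matrix integrality into the entrywise divisibility $\lambda\mid|d_2(A_0)|^2 M_{ij}$ in $R$. (For $2\times 2$ matrices the entries of $\mathrm{adj}(M)$ are just $\pm M_{ij}$.) Bilinearity then yields $\lambda\mid|d_2(A_0)|^2\,y^*x$ for all $x,y\in\Lambda$.

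To secure the matrix statement, I would first verify that $\Lambda$ is a direct summand of $R^4$: the quotient $R^4/\Lambda$ is isomorphic via $A_0^*$ to a submodule of $R^2$, and submodules of free modules over the PID $R$ are themselves free, so the defining short exact sequence splits. Consequently $C$ admits a left $R$-inverse $C_L\in R^{2\times 4}$ with $C_LC=I_2$.

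Second, I would assemble the block matrix $B:=(A_0\mid C)\in R^{4\times 4}$. The orthogonality conditions $A_0^*A_0=\lambda I_2$ and $A_0^*C=0$ give $B^*B=\mathrm{blockdiag}(\lambda I_2,M)$, and unpacking the identity $I_4=B(B^*B)^{-1}B^*$ produces the completeness relation
\[
\lambda CM^{-1}C^*=\lambda I_4-A_0A_0^*,
\]
whose right-hand side visibly lies in $R^{4\times 4}$. Conjugating by $C_L$ on the left and $C_L^*$ on the right -- noting that $C_LC=I_2$ forces $C^*C_L^*=I_2$ as well -- collapses this to
\[
\lambda M^{-1}=C_L\bigl(\lambda I_4-A_0A_0^*\bigr)C_L^*\in R^{2\times 2},
\]
which is the matrix statement required above.

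The main obstacle I anticipate is conceptual rather than computational: one has to reformulate the pointwise divisibility as the matrix claim $\lambda M^{-1}\in R^{2\times 2}$ and then locate the left inverse $C_L$ that ``extracts'' the $2\times 2$ information from the $4\times 4$ completeness relation. Without $C_L$, only the weaker $\lambda CM^{-1}C^*\in R^{4\times 4}$ is available, which by itself does not yield $\lambda M^{-1}\in R^{2\times 2}$. Once the direct-summand property of $\Lambda$ is in hand, the remaining manipulations are routine block-matrix algebra.
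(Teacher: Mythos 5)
Your proof is correct, and it takes a genuinely different route from the paper. You reduce the claim to the integrality of $\lambda M^{-1}$, which is legitimate: since $\det M=\lambda^2/|d_2(A_0)|^2$ by Proposition~\ref{discQlemma}(2) and the adjugate of a $2\times2$ matrix just permutes the entries with signs, $\lambda M^{-1}\in R^{2\times 2}$ is exactly the entrywise statement $\lambda\mid|d_2(A_0)|^2M_{ij}$, hence (by sesquilinearity) the proposition. Your two ingredients also check out: $R^4/\Lambda\cong\mathrm{im}(A_0^*)\leq R^2$ is free over the PID $R$, so $\Lambda$ is a direct summand and $C$ has a left inverse $C_L$ over $R$; and since $B=(A_0\mid C)$ is invertible over $K$, the identity $I_4=B(B^*B)^{-1}B^*$ with $B^*B=\mathrm{blockdiag}(\lambda I_2,M)$ gives $\lambda CM^{-1}C^*=\lambda I_4-A_0A_0^*$, from which $\lambda M^{-1}=C_L(\lambda I_4-A_0A_0^*)C_L^*\in R^{2\times2}$. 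The paper instead proves the divisibility by exhibiting explicit generators: Lemma~\ref{generatelemma} shows $\overline{d_2(A_0)}\Lambda=Rx_1+\dots+Rx_4$ with the cross-product vectors $x_h$, and Lemma~\ref{defLemma} computes $x_h^*x_g$ explicitly (a Lagrange-identity calculation), so that every $x_g^*x_h$ is visibly a multiple of $\lambda$. Your argument is shorter and coordinate-free, avoids any computation, and in fact proves the more general fact that $\lambda M^{-1}$ is integral for any $k$-icube in $R^n$ (the translation back to $\lambda\mid |d_k|^2y^*x$ uses the $2\times2$ adjugate trick, so it is special to corank $2$). What the paper's computational route buys is reuse: the explicit vectors $x_e$ and the formula $|x_e|^2=\lambda(\lambda-|\alpha_e|^2-|\beta_e|^2)$ from Lemma~\ref{defLemma} are needed again in the proof of Proposition~\ref{sumsquared4}, so the authors get Proposition~\ref{d2yx} essentially for free from lemmas they need anyway.
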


Using this we get that the quadratic form $Q':=\frac{|d_2(A_0)|^2}\lambda Q$ has an integral matrix. We have
\begin{equation}\label{discQprime}
\disc(Q')=\left(\frac{|d_2(A_0)|}{\lambda}\right)^2\disc(Q)=|d_2(A_0)|^2.
\end{equation}

\begin{proposition}\label{sumsquared4}
Assume $A_0=(a_1|a_2)\in R^{4\times 2}$ is a $2$-icube. Then there exists an integral $Q'$-orthoregular basis of norm $|d_2(A_0)|^2$. 
\end{proposition}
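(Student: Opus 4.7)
My plan is to apply Proposition~\ref{Orthoregbasisthm} to the binary hermitian form $Q'$ on $\Lambda$ (a rank-$2$ free $R$-module). Since $\disc(Q')=|d_2(A_0)|^2$ by \eqref{discQprime} and this is already an absolute square in $K$, the decomposition \eqref{normfree} yields $\Delta=|d_2(A_0)|^2$ and $\varepsilon=1$; the target norm $|d_2(A_0)|^2=\nu\Delta$ forces $\nu=1$, and I choose $\delta:=d_2(A_0)\in R$, which satisfies $|\delta|^2=\Delta$. For $R=\ZZ[i]$, Proposition~\ref{Orthoregbasisthm}(2) then produces the desired $Q'$-orthoregular basis unconditionally. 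For $R=\ZZ$, Proposition~\ref{Orthoregbasisthm}(1) requires $\alpha\nu=\alpha$ to be a sum of two squares, where $\alpha$ is the $(1,1)$-entry of the Gram matrix of $Q'$ in some $\ZZ$-basis of $\Lambda$; since $\disc(Q')$ is a perfect square, Lemma~\ref{squaredetlem} reduces this further to exhibiting a single nonzero $x\in\Lambda$ with $Q'(x)$ a sum of two integer squares.

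To produce such an $x$, I would use the Lipschitz quaternion structure. Identifying $\ZZ^4$ with the Lipschitz order $\mathbb{A}=\ZZ+\ZZ i+\ZZ j+\ZZ k\subset\mathbb{H}$, the conditions $|a_1|^2=|a_2|^2=\lambda$ and $\mathrm{Re}(\overline{a_1}a_2)=0$ show that $p:=\overline{a_1}a_2/\lambda\in\mathbb{H}_{\QQ}$ is pure imaginary with $|p|^2=1$ and $a_2=a_1 p$. The rational $2$-plane $\Pi:=\{q\in\mathbb{H}_{\QQ}:\mathrm{Re}(q)=0,\ \langle p,q\rangle=0\}$ lies inside the $3$-dimensional $\QQ$-space of pure imaginary rational quaternions, so it meets at least one of the coordinate hyperplanes $\{q_i=0\},\{q_j=0\},\{q_k=0\}$ in a rational line; let $q$ be a primitive integer vector on that line, a pure imaginary Lipschitz quaternion with at most two nonzero entries among its $i,j,k$ components. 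Setting $x:=a_1 q\in\mathbb{A}$, the identities
\[
\langle a_1,x\rangle=\mathrm{Re}(\overline{a_1}a_1 q)=\lambda\,\mathrm{Re}(q)=0,\qquad \langle a_2,x\rangle=\mathrm{Re}(\overline{a_1 p}\,a_1 q)=\lambda\,\mathrm{Re}(\overline{p}q)=\lambda\langle p,q\rangle=0
\]
place $x$ in $\Lambda$, and $x\ne 0$ because $\mathbb{H}$ is a division algebra. Finally,
\[
Q'(x)=\frac{|d_2(A_0)|^2}{\lambda}\cdot|a_1 q|^2=|d_2(A_0)|^2\cdot|q|^2
\]
is the product of a perfect square and a sum of at most two squares, hence itself a sum of two squares by the Brahmagupta--Fibonacci identity.

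The main obstacle I expect is conceptual rather than computational: the quaternion identification has to simultaneously handle the two orthogonality constraints defining $\Lambda$ and the sum-of-two-squares condition on $Q'$, and one must verify that these really do decouple so that the search for $q$ reduces to the elementary intersection-of-planes argument above. Once this framework is in place, every remaining step -- the application of Proposition~\ref{Orthoregbasisthm}, the reduction via Lemma~\ref{squaredetlem}, and the verification that $x=a_1 q$ lies in $\Lambda$ with $Q'(x)$ of the required shape -- is routine.
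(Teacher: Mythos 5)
Your argument is correct, and its outer scaffolding coincides with the paper's: both cases start from $\disc(Q')=|d_2(A_0)|^2$, take $\nu=1$, $\Delta=|d_2(A_0)|^2$, $\varepsilon=1$, $\delta=d_2(A_0)$, settle $R=\ZZ[i]$ by Proposition~\ref{Orthoregbasisthm}(2), and reduce $R=\ZZ$ via Lemma~\ref{squaredetlem} to a sum-of-two-squares statement about values of $Q'$. Where you genuinely diverge is in how that statement is verified. The paper argues by contradiction: after normalizing to $d_1(A_0)=1$, it evaluates $Q'$ on the four cross-product vectors $x_e$ using Lemma~\ref{defLemma}, getting $Q'(x_e)=d_2(A_0)^2(\lambda-\alpha_e^2-\beta_e^2)$, sums over $e$ to force $q\mid 2\lambda$ for a hypothetical obstructing prime $q\equiv 3\pmod 4$, and contradicts primitivity. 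You instead exhibit a single explicit witness: identifying $\ZZ^4$ with the Lipschitz order, setting $p=\overline{a_1}a_2/\lambda$, choosing a primitive pure quaternion $q$ orthogonal to $p$ with a vanishing coordinate (two $2$-dimensional subspaces of the $3$-space of pure quaternions meet in dimension at least one), and checking that $x=a_1q$ lies in $\Lambda$ with $Q'(x)=|d_2(A_0)|^2\,|q|^2$, visibly a sum of two squares; this verifies clause (3) of Lemma~\ref{squaredetlem} directly, with no reduction to the primitive case and no use of Lemma~\ref{defLemma} inside this step (that lemma is of course still needed, via Proposition~\ref{d2yx}, to know that $Q'$ is integral, which your appeal to Proposition~\ref{Orthoregbasisthm} presupposes -- but the statement of the proposition already assumes $Q'$ is given). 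Your construction is the four-dimensional, quaternionic analogue of the trick in the paper's proof of Theorems~\ref{3Z} and~\ref{3Zi} (picking $b_2\in\Lambda$ with a zero coordinate so that $\alpha$ is a sum of two squares): it buys a constructive, contradiction-free argument that works for arbitrary, not necessarily primitive, $A_0$, while the paper's version stays entirely within the cross-product machinery it has already developed and avoids invoking the quaternion structure of $\ZZ^4$ at this point.
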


However, the basis provided by Proposition \ref{sumsquared4} is an integral $Q$-orthoregular basis of norm $\lambda$, so any element of it extends $a_1,a_2$ to a 3-icube, finishing the $k=2$ case.

For $k=3$ we do the same as above for the first two vectors $a_1, a_2$. Then $a_3$ lies in $\Lambda$ and satisfies $Q'(a_3)=|d_2(A_0)|^2$, so by Proposition \ref{Orthoregbasisthm}(3) it can be extended to a $Q'$-orthoregular integral basis with a vector $a_4$, for which $A=(a_1|a_2|a_3|a_4)$ is the required 4-icube.
\end{proof}

For the proof of Propositions \ref{d2yx} and \ref{sumsquared4} we need the following notation: Let $A_0:=(a_1|a_2)\in R^{4\times 2}$ be a 2-icube of norm $\lambda$ with $a_1=(\alpha_1,\alpha_2,\alpha_3,\alpha_4)^T$, 
$a_2=(\beta_1,\beta_2,\beta_3,\beta_4)^T$, then set  
$d_{e,f}:=\alpha_e\beta_f-\beta_e\alpha_f$ for $1\leq e,f,\leq 4$. For an integer $h\in\{1,2,3,4\}$ let us denote by $x_h$ the  cross product (see Corollary \ref{crossproductcor}) of $\pi_h(a_1)$ and $\pi_h(a_2)$ where $\pi_h$ is the orthogonal projection to the orthogonal complement of the $h$th basis vector. For example $x_4=(\overline{d_{2,3}},\overline{d_{3,1}},\overline{d_{1,2}},0)^T$. 
\begin{lemma}\label{generatelemma}
We have \[\overline{d_2(A_0)}\Lambda=Rx_1+Rx_2+Rx_3+Rx_4.\]
\end{lemma}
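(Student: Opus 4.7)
The plan is to prove both containments separately.

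For $Rx_1+Rx_2+Rx_3+Rx_4\subseteq\overline{d_2(A_0)}\Lambda$, I would first verify $x_h\in\Lambda$ for each $h$: the Hermitian inner product $a_j^*x_h$ equals the conjugate of a $3\times 3$ determinant with a repeated row (one row comes from $\pi_h(a_j)$, another appears in the expansion of the cross product), hence vanishes. The three nonzero entries of $x_h$ are $\pm\overline{d_{ef}}$ with $\{e,f\}\subset\{1,2,3,4\}\setminus\{h\}$, so each is divisible by $\overline{d_2(A_0)}=\gcd_{e<f}\overline{d_{ef}}$. Consequently $x_h/\overline{d_2(A_0)}\in R^4\cap(\Lambda\otimes K)=\Lambda$, which gives $x_h\in\overline{d_2(A_0)}\Lambda$.

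The harder direction $\overline{d_2(A_0)}\Lambda\subseteq Rx_1+Rx_2+Rx_3+Rx_4$ would rest on an explicit Plücker-type vector identity: for every $w=(w_1,w_2,w_3,w_4)^T\in\Lambda$ and every partition $\{1,2,3,4\}=\{i,j\}\sqcup\{k,l\}$,
\[
\overline{d_{ij}}\,w \;=\; \varepsilon_1\,w_k\, x_l \;+\;\varepsilon_2\,w_l\, x_k,
\]
with signs $\varepsilon_1,\varepsilon_2\in\{\pm 1\}$ dictated by the conventions in the definitions of the $x_h$. To derive it I would combine the orthogonality relations $a_1^*w=0$ and $a_2^*w=0$: multiplying the first by $\overline{\beta_m}$, the second by $\overline{\alpha_m}$, and subtracting gives the four scalar relations
\[
\sum_{p=1}^4 \overline{d_{pm}}\,w_p=0 \qquad (m=1,2,3,4),
\]
where $d_{mm}=0$. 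For fixed $\{i,j\}$, the $i$-th and $j$-th coordinates of the displayed vector identity are tautologies, while its $k$-th and $l$-th coordinates follow from the two scalar relations with $m=j$ and $m=i$, respectively. Thus $\overline{d_{ij}}\,w\in Rx_1+Rx_2+Rx_3+Rx_4$ for every pair $i<j$.

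To conclude, since $R$ is a PID and $\overline{d_2(A_0)}$ is (up to a unit) the greatest common divisor of the $\overline{d_{ef}}$, I would write $\overline{d_2(A_0)}=\sum_{e<f} r_{ef}\,\overline{d_{ef}}$ with $r_{ef}\in R$. Multiplying through by $w$ and combining with the previous paragraph yields $\overline{d_2(A_0)}\,w\in Rx_1+Rx_2+Rx_3+Rx_4$. The main obstacle is the sign bookkeeping in the Plücker-type identity, needed to match the cross-product sign conventions built into the definition of $x_1,x_2,x_3,x_4$; once this is done everything else is routine.
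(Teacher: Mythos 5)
Your proof is correct and follows essentially the same route as the paper: both reduce everything to the identity $\overline{d_{ij}}\,w=\pm w_k x_l\pm w_l x_k$ for each pair $\{i,j\}$ and then finish with the Bézout/gcd step, the only difference being that you verify this identity directly from the relations $\sum_{p}\overline{d_{pm}}\,w_p=0$ (which incidentally avoids the paper's case split $d_{ij}=0$ versus $d_{ij}\neq 0$), whereas the paper argues that the difference vector has two zero coordinates and is orthogonal to two independent vectors, hence vanishes. One small bookkeeping slip: in your displayed identity the tautological coordinates are the $k$-th and $l$-th ones (where $x_l$, resp.\ $x_k$, carries the entry $\pm\overline{d_{ij}}$ or $0$), while it is the $i$-th and $j$-th coordinates that follow from the scalar relations with $m=j$ and $m=i$; you stated this the other way around, but the fix is immediate and does not affect the argument.
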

\begin{proof}
By construction $x_1,x_2,x_3,x_4\in \Lambda$, as each of these vectors are orthogonal to both $a_1$ and $a_2$. Further, all entries of $x_1,x_2,x_3,x_4$ are divisible by $\overline{d_2(A_0)}$, so we have $x_1,x_2,x_3,x_4\in \overline{d_2(A_0)}\Lambda$.

On the other hand, pick $y\in \Lambda$. We claim that $\overline{d_{e,f}}y\in Rx_1+Rx_2+Rx_3+Rx_4$ for all $1\leq e<f\leq 4$. By symmetry, we may assume without loss of generality that $e=1,f=2$. If $d_{1,2}=0$ then there is nothing to prove, so assume $d_{1,2}\neq 0$ so that $(\alpha_1,\alpha_2,0,0)$ is \emph{not} parallel to $(\beta_1,\beta_2,0,0)$. However, the $3$rd and $4$th entries of $\overline{d_{1,2}}y-\gamma_3x_4-\gamma_4x_3$ are both $0$ while this vector is orthogonal to both $a_1$ and $a_2$ hence also orthogonal to both $(\alpha_1,\alpha_2,0,0)$ and $(\beta_1,\beta_2,0,0)$. We deduce $\overline{d_{1,2}}y-\gamma_3x_4-\gamma_4x_3=0$ whence $\overline{d_{1,2}}y\in Rx_1+Rx_2+Rx_3+Rx_4$ as claimed. 

Finally, note that the greatest common divisor of $\overline{d_{e,f}}$ ($1\leq e<f\leq 4$) is $\overline{d_2(A_0)}$, so we obtain $\overline{d_2(A_0)}y\in Rx_1+Rx_2+Rx_3+Rx_4$.
\end{proof}

\begin{lemma}\label{defLemma}
For all $1\leq g,h\leq 4$ we have
\[x_h^*x_g=\begin{cases}\lambda(\lambda-|\alpha_h|^2-|\beta_h|^2)&\text{if }g=h\\
\lambda(\alpha_g\overline{\alpha_h}+\beta_g\overline{\beta_h}) &\text{if }g\neq h .
\end{cases}\]
In particular, $\lambda$ divides $x_g^*x_h$.
\end{lemma}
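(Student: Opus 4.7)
The plan is to split into the diagonal case $g=h$ and the off-diagonal case $g\ne h$, and in both unfold $x_h^*x_g=\sum_j\overline{(x_h)_j}(x_g)_j$ as a short bilinear in the $2\times 2$ minors $d_{p,q}$ of $(a_1\mid a_2)$, which are the building blocks of the entries of $x_h$ and $x_g$. The only tools will be the three $2$-icube relations
\[
\sum_{j=1}^4|\alpha_j|^2=\sum_{j=1}^4|\beta_j|^2=\lambda,\qquad\sum_{j=1}^4\alpha_j\overline{\beta_j}=0,
\]
which collapse the sums in each case.

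For the diagonal case, since $(x_h)_h=0$, we have $x_h^*x_h=\sum_{j\ne h}|(x_h)_j|^2=\sum_{\{p,q\}\subset\{1,2,3,4\}\setminus\{h\}}|d_{p,q}|^2$. Lagrange's identity (equivalently, Cauchy--Binet applied to the $4\times 2$ matrix $(a_1\mid a_2)$) gives $\sum_{1\le p<q\le 4}|d_{p,q}|^2=|a_1|^2|a_2|^2-|a_1^*a_2|^2=\lambda^2$, so $x_h^*x_h=\lambda^2-\sum_{j\ne h}|d_{\min(h,j),\max(h,j)}|^2$. A direct expansion of the residual sum, substituting the three displayed relations, reduces it to $\lambda(|\alpha_h|^2+|\beta_h|^2)$, yielding the first branch.

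For the off-diagonal case, both $(x_h)_h$ and $(x_g)_g$ vanish, so $x_h^*x_g$ has at most two nonzero summands, each a product of two conjugated $2\times 2$ minors with complementary index pairs. Expanding this short bilinear and repeatedly substituting the three relations triggers heavy cancellation, leaving the stated quadratic expression $\lambda(\alpha_g\overline{\alpha_h}+\beta_g\overline{\beta_h})$ (up to the overall sign implicit in the orientation convention used in defining the cross product $x_h$). A slicker unified route recognises that $x_h$ agrees, up to a sign depending only on $h$, with the $4$-dimensional Hermitian triple cross product $[a_1,a_2,e_h]$ characterised by $[a_1,a_2,e_h]^*y=\det(a_1\mid a_2\mid e_h\mid y)$ for all $y\in R^4$; then the Hermitian Cauchy--Binet identity compresses $x_h^*x_g$ into the $3\times 3$ Gram determinant
\[
\pm\det\begin{pmatrix}\lambda&0&\alpha_g\\ 0&\lambda&\beta_g\\ \overline{\alpha_h}&\overline{\beta_h}&\delta_{hg}\end{pmatrix}=\pm\lambda\bigl(\lambda\delta_{hg}-\alpha_g\overline{\alpha_h}-\beta_g\overline{\beta_h}\bigr),
\]
producing both branches simultaneously. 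In either approach, the divisibility $\lambda\mid x_h^*x_g$, which is the ``in particular'' part of the statement and all that the subsequent Proposition~\ref{d2yx} will actually use, is immediate from the factored form.

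The main obstacle is the sign bookkeeping in the off-diagonal case: the orientation-dependent sign in the identification $x_h=\pm[a_1,a_2,e_h]$ varies with $h$ by a factor of $(-1)^{h-1}$ under the standard Levi-Civita convention, and this has to be reconciled with the specimen $x_4=(\overline{d_{2,3}},\overline{d_{3,1}},\overline{d_{1,2}},0)^T$ for consistency across all four choices of $h$. This is mechanical once set up but is the only step requiring genuine care; it is also immaterial for the divisibility conclusion, which is what the subsequent proof relies on.
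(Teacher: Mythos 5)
Your argument is correct in substance and is essentially the paper's: the off-diagonal case is the same two-term expansion of $x_h^*x_g$ into products of complementary minors $d_{e,g}\overline{d_{e,h}},\ d_{f,g}\overline{d_{f,h}}$ collapsed by the relations $|a_1|^2=|a_2|^2=\lambda$, $a_1^*a_2=0$, and your diagonal computation (Cauchy--Binet in dimension $4$ minus the terms involving $h$) is an equivalent variant of the paper's application of Lagrange's identity to $\pi_h(a_1),\pi_h(a_2)$. The sign issue you flag is, however, not a removable piece of bookkeeping: if each $x_h$ is the cross product of the projections with the remaining coordinates taken in increasing order (consistent with the specimen $x_4$), then in fact $x_h^*x_g=(-1)^{g+h+1}\lambda(\alpha_g\overline{\alpha_h}+\beta_g\overline{\beta_h})$; for instance $a_1=(1,2,2,4)^T$, $a_2=(2,-1,4,-2)^T$ give $x_4=(10,0,-5,0)^T$, $x_2=(-20,0,10,0)^T$, hence $x_4^*x_2=-250$ while $\lambda(\alpha_2\overline{\alpha_4}+\beta_2\overline{\beta_4})=+250$. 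Since reorienting the individual cross products only multiplies $x_h^*x_g$ by $\epsilon_h\epsilon_g$ with $\epsilon_j\in\{\pm1\}$, no choice of orientations produces the uniform $+$ sign of the statement for all six pairs (the required signs $\epsilon_h\epsilon_g=(-1)^{g+h+1}$ are inconsistent), and the paper's own proof hides this in the unjustified step ``by definition $x_h^*x_g=d_{e,g}\overline{d_{e,h}}+d_{f,g}\overline{d_{f,h}}$''. So your hedge is exactly right, and, as you observe, it is immaterial for every later use of the lemma: Proposition~\ref{d2yx} needs only the divisibility $\lambda\mid x_h^*x_g$ and Proposition~\ref{sumsquared4} only the diagonal values, both of which your proof (like the paper's) fully establishes.
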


\begin{proof}~
At first we treat the case $g=h$. By Lagrange's identity
\begin{align*}
x_h^*x_h 
&=|\pi_h(a_1)\times \pi_h(a_2)|^2=|\pi_h(a_1)|^2|\pi_h(a_2)^2|-|\pi_h(a_2)^*\pi_h(a_1)|^2\\ &=
(\lambda-|\alpha_h|^2)(\lambda-|\beta_h|^2)-|\alpha_h|^2\cdot|\beta_h|^2\\ &=\lambda(\lambda-|\alpha_h|^2-|\beta_h|^2).
\end{align*} 
Now assume $g\neq h$ and let $e$ and $f$ be the two remaining elements in $\{1,2,3,4\}$ so that $e,f,g,h$ is a permutation of the set $\{1,2,3,4\}$. By definition
\begin{align*}
x_h^*x_g=d_{e,g}\overline{d_{e,h}}+d_{f,g}\overline{d_{f,h}}&=
(\alpha_e\beta_g-\beta_e\alpha_g)(\overline{\alpha_e}\overline{\beta_h}-\overline{\beta_e}\overline{\alpha_h})+(\alpha_f\beta_g-\beta_f\alpha_g)(\overline{\alpha_f}\overline{\beta_h}-\overline{\beta_f}\overline{\alpha_h})\\ &=
(|\alpha_e|^2+|\alpha_f|^2)\beta_g\overline{\beta_h}+(|\beta_e|^2+|\beta_f|^2)\alpha_g\overline{\alpha_h}\\ & \qquad -(\alpha_e\overline{\beta_e}+\alpha_f\overline{\beta_f})\overline{\alpha_h}\beta_g-(\overline{\alpha_e}\beta_e+\overline{\alpha_f}\beta_f)\alpha_g\overline{\beta_h}.
\end{align*}
Using that $a_1^*a_1=a_2^*a_2=\lambda$ and $a_1^*a_2=a_2^*a_1=0$ we get
\begin{align*}
d_{e,g}\overline{d_{e,h}}+d_{f,g}\overline{d_{f,h}}&=
(\lambda-|\alpha_g|^2-|\alpha_h|^2)\beta_g\overline{\beta_h}+(\lambda-|\beta_g|^2-|\beta_h|^2)\alpha_g\overline{\alpha_h}\\
&\qquad +(\alpha_g\overline{\beta_g}+\alpha_h\overline{\beta_h})\overline{\alpha_h}\beta_g+(\overline{\alpha_g}\beta_g+\overline{\alpha_h}\beta_h)\alpha_g\overline{\beta_h} \\ &=
\lambda(\beta_g\overline{\beta_h}+\alpha_g\overline{\alpha_h}).
\end{align*}
The proof is complete.
\end{proof}

\begin{proof}[Proof of Proposition \ref{d2yx}]
This follows combining Lemmas \ref{generatelemma} and \ref{defLemma}.
\end{proof}

\begin{proof}[Proof of Proposition \ref{sumsquared4}]
If $R=\mathbb Z[i]$ then by Proposition \ref{Orthoregbasisthm}(2) there exists an integral $Q'$-orthoregular basis of norm $|d_2(A_0)|^2$ (which is an absolute square in $R$). We assume hence $R=\ZZ$.

We may assume that $d_1(A_0)=1$, otherwise we can divide by the common divisor. That does not change $\Lambda$, $Q$, or 
$Q'$. By (1) of Proposition \ref{Orthoregbasisthm} we have to prove that $\alpha'$ (the upper left entry of the matrix $M'$) is a sum of two squares. By Lemma \ref{squaredetlem} it suffices to show that there is no prime $q=4k+3$ for which the exponent of $q$ in $Q'(x)$ is odd for all $x\in\mathbb Z^2$.

Assume for contradiction that there is such a prime $q$. By Lemma \ref{defLemma} we obtain
\[Q'(x_e)=\frac{d_2(A_0)^2}{\lambda}|x_e|^2=d_2(A_0)^2(\lambda-\alpha_e^2-\beta_e^2)\]
for all $1\leq e\leq 4$. As the exponent of $q$ in $d_2(A_0)^2$ is even we have $q\mid\lambda-\alpha_e^2-\beta_e^2$. Thus $q$ divides
\[\sum_{e=1}^4\left(\lambda-\alpha_e^2-\beta_e^2\right)=2\lambda,\]
whence it also divides both $\lambda$ and $\alpha_e^2+\beta_e^2$ for each $1\leq e\leq 4$. Since $q=4k+3$, we have $q\mid\alpha_e,\beta_e$
, hence $q\mid d_1(A_0)=1$, which is a contradiction.
\end{proof}

\subsection{The Smith normal form}\label{SNFsection}~

The proof of Propositions \ref{3Zip} and \ref{4Zip} will rely on the fact that if $A$ is an $n$-icube as in the statements, then $\SNF(A)$ is uniquely determined by (the argument of) $\det(A)$.

\begin{proof}[Proof of Proposition \ref{3Zip}]
We pick a basis $b_2,b_3$ of $\Lambda$ as in Proposition~\ref{discQlemma}.  
Using Corollary \ref{crossproductcor} we may assume $a_1=b_2\times b_3$ by possibly multiplying $b_3$ by a unit.

By Proposition \ref{Orthoregbasisthm} there is an integral  
a $Q$-orthoregular basis of norm $|a_1|^2=\Delta$ and type $\overline\alpha_2$, say $(x_2,y_2)^T,$ $(x_3,y_3)^T=(M(x_2,y_2)^T)_\perp/\overline\alpha_2\in R^2$. Then we obtain the corresponding icube $A=(a_1|a_2|a_3)$ by putting $a_2=x_2b_2+y_2b_3,a_3=x_3b_2+y_3b_3\in\Lambda$ 
and compute
\[a_2\times a_3=\det\left(\begin{pmatrix} x_2\\ y_2\end{pmatrix}\Bigg|\left(M\begin{pmatrix} x_2\\ y_2\end{pmatrix}\right)_\perp/\overline\alpha_2\right)b_2\times b_3=\frac{Q(x_2,y_2)}{\overline\alpha_2}a_1=\alpha_2 a_1 .\]
Hence we get $\det(A)=|a_1|^2\alpha_2=\Delta\alpha_2$. As $a_1$  
is primitive Lemma \ref{SNFlemma} finally yields $\SNF(A)=\diag(1,\alpha_2,\Delta)$.
\end{proof}

We shall need the following

\begin{lemma}\label{d2divlambda}
Let $A_0=(a_1|a_2)\in\mathbb Z[i]^{4\times 2}$ be an icube of norm $\lambda$. If $a_1$ is primitive, then $d_2(A_0)$ divides $\lambda$.
\end{lemma}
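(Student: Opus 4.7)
The plan is to use the saturation of $\ZZ[i] a_1 + \ZZ[i] a_2$ inside $\ZZ[i]^4$ to express $a_2$ in a basis whose second coordinate encodes $d_2(A_0)$, and then to combine the two icube constraints $a_1^* a_2 = 0$ and $|a_2|^2 = \lambda$. Concretely, I would set $L^{\mathrm{sat}} := \ZZ[i]^4 \cap \mathrm{span}_{\QQ(i)}(a_1, a_2)$, a saturated rank-$2$ sublattice. Since $\ZZ[i]^4/L^{\mathrm{sat}}$ is torsion free and $a_1$ is primitive in $\ZZ[i]^4$, $a_1$ stays primitive in $L^{\mathrm{sat}}$ and thus extends to a $\ZZ[i]$-basis $(a_1, w)$ of $L^{\mathrm{sat}}$. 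Writing $a_2 = \xi a_1 + \eta w$ uniquely with $\xi, \eta \in \ZZ[i]$, the change-of-basis matrix $\left(\begin{smallmatrix} 1 & \xi \\ 0 & \eta \end{smallmatrix}\right)$ has determinant $\eta$, so $\eta$ coincides with $d_2(A_0)$ up to a unit, and the whole problem reduces to showing $\eta \mid \lambda$.

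The next step is to translate the icube conditions into arithmetic constraints on $\xi, \eta$. The orthogonality $a_1^* a_2 = 0$ rearranges to $a_1^* w = -\lambda\xi/\eta$; since the left side lies in $\ZZ[i]$, this yields a first divisibility $\eta \mid \lambda \xi$. Substituting back into $|a_2|^2 = \lambda$ and simplifying (the cross terms collapse to $-2\lambda|\xi|^2$) produces the key identity
\[
|\eta|^2 \, |w|^2 \;=\; \lambda\bigl(1 + |\xi|^2\bigr).
\]
As $|w|^2 \in \ZZ_{\geq 1}$, this gives a second divisibility $|\eta|^2 \mid \lambda(1+|\xi|^2)$ in $\ZZ$.

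The conclusion $\eta \mid \lambda$ is then checked one Gaussian prime at a time. For $\pi$ with $\bar\pi$ a unit multiple of $\pi$ (i.e.\ $\pi=1+i$ or $\pi$ an inert rational prime $q\equiv 3\pmod 4$), the bound $|\eta|^2 \mid \lambda^2$ -- which is immediate from comparing $\det((a_1|a_2)^*(a_1|a_2)) = \lambda^2$ with the Gram determinant of the basis $(a_1,w)$ -- already gives $2v_\pi(\eta) \leq 2v_\pi(\lambda)$. The hard part is the split primes $\pi\bar\pi = p$, where $|\eta|^2 \mid \lambda^2$ only yields $v_\pi(\eta) + v_{\bar\pi}(\eta) \leq 2v_p(\lambda)$, strictly weaker than what is needed. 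Here I would use both divisibilities together: setting $t := v_p(\lambda)$ and supposing for contradiction $v_\pi(\eta) \geq t+1$, the relation $\eta \mid \lambda\xi$ forces $v_\pi(\xi) \geq 1$, so $p \mid |\xi|^2$ and therefore $v_p(1+|\xi|^2) = 0$; the key identity then gives $v_p(|\eta|^2) \leq v_p\bigl(\lambda(1+|\xi|^2)\bigr) = t$, contradicting $v_p(|\eta|^2) \geq v_\pi(\eta) \geq t+1$. Hence $v_\pi(\eta) \leq v_\pi(\lambda)$ for every Gaussian prime $\pi$, which is exactly $d_2(A_0) \mid \lambda$.
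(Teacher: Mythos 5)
Your proof is correct, but it follows a genuinely different route from the paper. The paper stays within the machinery it has already built: it takes a basis $b_3,b_4$ of the orthogonal-complement lattice $\Lambda$, forms the cross product $a$ of $a_2,b_3,b_4$, and invokes Corollary~\ref{crossproductcor} together with the index computation $\det(a_1|a_2|b_3|b_4)=\omega\lambda^2/\overline{d_2(A_0)}$ from \eqref{detBuptounit}; since $a=\frac{\overline{\omega}\lambda}{d_2(A_0)}a_1$ has Gaussian-integral entries and $a_1$ is primitive, $\lambda/d_2(A_0)\in\ZZ[i]$ follows in two lines. You instead work inside the saturation $L^{\mathrm{sat}}$ of the column span, use primitivity of $a_1$ to extend it to a basis $(a_1,w)$, identify $d_2(A_0)$ with $\eta$ up to a unit (this rests on the standard fact that a basis matrix of a saturated sublattice has trivial second determinantal divisor, which you use implicitly via torsion-freeness of the quotient -- fine, but worth a sentence), and then exploit the two icube constraints. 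This is self-contained: it needs neither the cross-product corollary nor the $R$-index computation, at the price of being longer. One remark: your final valuation-by-valuation case analysis (and the Gram-determinant bound $|\eta|^2\mid\lambda^2$) is unnecessary, because your two relations already finish the argument directly: from $\eta\mid\lambda\xi$ you get $\eta\mid\lambda\xi\overline{\xi}=\lambda|\xi|^2$, while $\eta\mid\eta\overline{\eta}|w|^2=\lambda(1+|\xi|^2)$, and subtracting gives $\eta\mid\lambda$ at once; this makes your route nearly as short as the paper's.
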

\begin{proof}
Define $\Lambda=\langle b_3,b_4\rangle$ and $Q$ as in Proposition~\ref{discQlemma} and let $a$ be the cross product of $a_2,b_3$ and $b_4$ as in Corollary \ref{crossproductcor}. Put $B$ for the matrix $(a_1|a_2|b_3|b_4)$. Since $a_1$ is primitive, we may apply Corollary \ref{crossproductcor} to obtain $a=(\overline{\det(B)}/\lambda)\cdot a_1$. On the other hand, we have $\det(B)=\omega\lambda^2/\overline{d_2(A_0)}$ for some unit $\omega\in \ZZ[i]$ by \eqref{detBuptounit} which yields $a=\frac{\overline{\omega}\lambda}{d_2(A_0)}a_1$. Hence the greatest common divisor of the entries of $a$ is $\frac{\lambda}{d_2(A_0)}$ which lies in $\ZZ[i]$ since the entries of $a$ are integral.
\end{proof}

We finish the paper by the
\begin{proof}[Proof of Proposition \ref{4Zip}]
By Lemma \ref{d2divlambda} and the assumption that $(d_2(A_0),\overline{d_2(A_0)})=1$ the Smith normal form $\diag(1,\alpha_2,\lambda/\overline{\alpha_2},\lambda)$ makes sense for any divisor $\alpha_2$ of $d_2(A_0)$. Put $\delta:=\frac{d_2(A_0)\overline{\alpha_2}}{\alpha_2}\in \ZZ[i]$ so that $|\delta|^2=|d_2(A_0)|^2$. By (2) of Proposition \ref{Orthoregbasisthm} we know that there is an integral $Q'$-orthoregular basis $(x_3,y_3)^T, (x_4,y_4)^T$ of type $\delta$ which corresponds to an icube $A(\alpha_2)=(a_1|a_2|a_3|a_4)$ of norm $\lambda$ with $a_j=x_jb_3+y_jb_4$ ($j=3,4$). Using \eqref{detBuptounit} we get
\begin{align*}
\det(A(\alpha_2))=\det(a_1|a_2|b_3|b_4)\det \begin{pmatrix}x_3 & x_4 \\ y_3 & y_4\end{pmatrix}=\frac{\omega\lambda^2\overline\delta}{\overline{d_2(A_0)}}=\frac{\omega\lambda^2\alpha_2}{\overline{\alpha_2}}
\end{align*}
for some unit $\omega\in \ZZ[i]$.

Since $a_1$ is primitive, we have $d_1(A(\alpha_2))=1$ and $\alpha:=d_2(A(\alpha_2))$ divides $d_2(A_0)$ as $A_0$ is a submatrix of $A(\alpha_2)$. Therefore the Smith normal form of $A(\alpha_2)$ is of the form $\SNF(A(\alpha_2))=\diag(1,\alpha,\lambda/\overline\alpha,\lambda)$ for some $\alpha\mid d_2(A_0)$ by Lemma \ref{SNFlemma}. Comparing the determinants we find $
\alpha_2/\overline{\alpha_2}=\omega'\cdot\alpha/\overline{\alpha}
$ for some unit $\omega'\in \ZZ[i]$. Since the units of $\ZZ[i]$ are $\pm1,\pm i$ we deduce that the arguments of $\alpha$ and $\alpha_2$ differ by an integer multiple of $\frac{\pi}{4}$. Since the Gaussian integers with argument $\pm\frac{\pi}{4}$ are divisible by $1+i$ and $1+i\nmid d_2(A_0)$, by possibly replacing $\alpha$ by a unit multiple we may further assume that the arguments of $\alpha$ and $\alpha_2$ are equal. By the conditions on $d_2(A_0)$, its divisors are uniquely determined by their argument hence $\alpha=\alpha_2$. Thus $A(\alpha_2)$ is an icube with the required Smith normal form.
\end{proof}

\bibliographystyle{alpha}
\bibliography{biblio.bib}

\end{document}